\def\vblack(#1, #2)#3{\cnode*[linecolor=black](#1, #2){3}{#3}}
\def\vwhite(#1,#2)#3{\cnode[linecolor=black,fillcolor=white,fillstyle=solid](#1,
#2){3}{#3}}
\def\tbox(#1,#2)#3{
\x=#1 \y=#2
\multiply\x by 12
\multiply\y by 12
\z=\x \t=\y
\advance\z by 12
\advance\t by 12
\psline(\x,\y)(\x,\t)(\z,\t)(\z,\y)(\x,\y)
\advance\x by 6
\advance\y by 6
\rput(\x,\y){{\bf #3}}}
\newcommand{\ket}[1]{\ensuremath{|#1\rangle}}
\newcommand{\bra}[1]{\ensuremath{\langle #1|}}
\newcommand{\T}{\mathcal{T}}
\newcommand{\ttt}{\sigma}
\newcommand{\ct}{\mathcal{T}}
\newcommand{\beq}{\begin{equation}}
\newcommand{\eeq}{\end{equation}}
\newcommand{\sn}{\mathfrak{S}_n}
\newcommand{\bea}{\begin{eqnarray}}
\newcommand{\eea}{\end{eqnarray}}
\DeclareMathOperator{\wt}{wt}
\DeclareMathOperator{\Des}{Des}
\DeclareMathOperator{\type}{type}
\def\llangle{\langle}
\def\rrangle{\rangle}
\def\QStat{\mathrm{QFact}}
\def\i{\mathrm{i}}
\def\dd{\mathsf{d}}
\def\ee{\mathsf{e}}
\def\1{\mathsf{1}}
\def\({\left(}
\def\){\right)}
\def\Z{\mathbb{Z}}
\newtheorem{theorem}{Theorem}[section]
\newtheorem{proposition}[theorem]{Proposition}
\newtheorem{lemma}[theorem]{Lemma}
\newtheorem{example}[theorem]{Example}
\newtheorem{corollary}[theorem]{Corollary}
\newtheorem{remark}[theorem]{Remark}
\newtheorem{problem}[theorem]{Problem}
\newtheorem{definition}[theorem]{Definition}
\begin{document}

\title[Formulae for Askey-Wilson moments and enumeration
of staircase tableaux]{Formulae for Askey-Wilson moments
and enumeration of staircase tableaux}

\author{S. Corteel, R. Stanley, D. Stanton, and L. Williams}
\date{\today}
\thanks{The first author was partially supported by
  the grant ANR-08-JCJC-0011;  the fourth author
  was partially supported by the NSF grant DMS-0854432 and an Alfred Sloan Fellowship.}
\address{LIAFA,
Centre National de la Recherche Scientifique et Universit\'e Paris Diderot,
Paris 7, Case 7014, 75205 Paris Cedex 13
France}
\email{corteel@liafa.jussieu.fr}
\address{Department of Mathematics, Massachusetts Institute of Technology, 
Cambridge, MA 02138}
\email{rstan@math.mit.edu}
\address{Department of Mathematics, University of Minnesota,
Minneapolis, MN 55455}
\email{stanton@math.umn.edu}
\address{Department of Mathematics, University of California, Berkeley,
Evans Hall Room 913, Berkeley, CA 94720}
\email{williams@math.berkeley.edu}

\subjclass[2000]{Primary 05E10; Secondary 82B23, 60C05}

\begin{abstract}
We explain how the moments of the (weight function
of the) Askey Wilson polynomials
are related to the enumeration 
of the {\it staircase tableaux} 
introduced by the first and fourth authors \cite{CW3, CW4}. 
This gives us a direct combinatorial formula
for these moments, which is related to, but more 
elegant than the formula given in \cite{CW3}.
Then we use techniques developed by Ismail and 
the third author to give  explicit formulae
for these moments and for the enumeration of staircase tableaux. 
Finally we study the enumeration of staircase tableaux at various
specializations of the parameterizations; for example, we obtain
the Catalan numbers, Fibonacci numbers, Eulerian numbers, 
the number of permutations, and the number of matchings.
\noindent

[Keywords: staircase tableaux,
asymmetric exclusion process, Askey-Wilson polynomials, permutations,
matchings]\\
\noindent
\end{abstract}

\maketitle
\setcounter{tocdepth}{1}
\tableofcontents


\section{Introduction}\label{intro}

In recent work \cite{CW3, CW4}
the first and fourth 
authors presented a new combinatorial
object that they called {\em staircase tableaux}.
They used these objects to solve two related problems:
to give a combinatorial formula for the stationary distribution
of the asymmetric exclusion process on a one-dimensional
lattice with open boundaries, where all parameters
$\alpha, \beta, \gamma, \delta, q$ are general;
and to give a combinatorial formula for the moments of 
the (weight function of the) Askey-Wilson polynomials.
In this paper we build upon that work, 
and give a somewhat simpler combinatorial
formula for the Askey-Wilson moments.  We also use
work of Ismail and the third author to give an explicit
formula for the Askey-Wilson moments.  Finally we study some special
cases and explore the combinatorial properties of staircase 
tableaux: for example, we highlight a forest structure underlying
staircase tableaux.

\begin{definition}
A \emph{staircase tableau} of size $n$ is a Young diagram of ``staircase"
shape $(n, n-1, \dots, 2, 1)$ such that boxes are either empty or
labeled with $\alpha, \beta, \gamma$, or $\delta$, subject to the following conditions:
\begin{itemize}
\item no box along the diagonal is empty;
\item all boxes in the same row and to the left of a $\beta$ or a $\delta$ are empty;
\item all boxes in the same column and above an $\alpha$ or a $\gamma$ are empty.
\end{itemize}
The \emph{type} $\type(\T)$ of a staircase tableau $\T$
is a word in $\{\circ, \bullet\}^n$
obtained by reading the diagonal boxes from
northeast to southwest and writing a $\bullet$ for each $\alpha$ or $\delta$,
and a $\circ$ for each $\beta$ or $\gamma$.
\end{definition}

See the left of Figure \ref{staircase} for an example.

\begin{figure}[h]
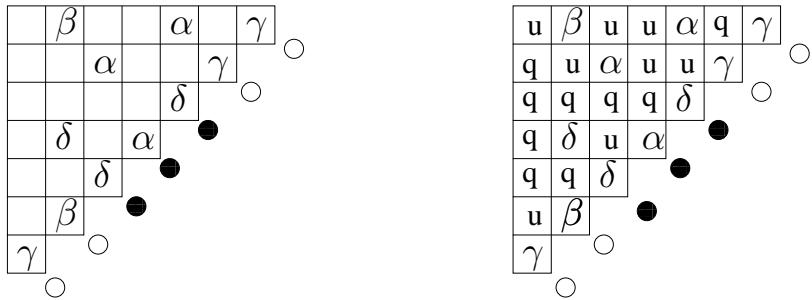

\input{Staircase.pstex_t} \hspace{6em} \input{Staircase2.pstex_t}
\caption{A staircase tableau
of size $7$ and type $\circ \circ \bullet \bullet \bullet \circ \circ$}
\label{staircase}
\end{figure}

Staircase tableaux with no $\gamma$'s or $\delta$'s are
in bijection with permutation tableaux \cite{Postnikov,SW} and 
alternative tableaux \cite{Viennot}.  See \cite{CW4} for more details.

\begin{definition}\label{weight}
The \emph{weight} $\wt(\T)$ of a staircase tableau $\T$ is a monomial in
$\alpha, \beta, \gamma, \delta, q$, and $u$, which we obtain as follows.
Every blank box of $\T$ is assigned a $q$ or $u$, based on the label of the closest
labeled box to its right in the same row and the label of the closest labeled box
below it in the same column, such that:
\begin{itemize}
\item every blank box which sees a $\beta$ to its right gets a $u$;
\item every blank box which sees a $\delta$ to its right gets  a $q$;
\item every blank box which sees an $\alpha$ or $\gamma$ to its right,
  and an $\alpha$ or $\delta$ below it, gets  a $u$;
\item every blank box which sees an $\alpha$ or $\gamma$ to its right,
  and a $\beta$ or $\gamma$ below it, gets a $q$.
\end{itemize}
After filling all blank boxes,
we define $\wt(\T)$
to be the product of all labels in all boxes.
\end{definition}

The right of Figure \ref{staircase} 
shows that the weight of the staircase tableau is
$\alpha^3 \beta^2 \gamma^3 \delta^3 q^9 u^8$.

\begin{remark}
The weight of a staircase tableau
always has degree $n(n+1)/2$.  
For convenience, we will usually set $u=1$,
since this results in no loss of information.
\end{remark}

We define 
$$Z_n(\alpha,\beta,\gamma,\delta;q,u)=\sum_{\T\ {\rm of}\ {\rm size}\ n} {\rm wt(\T)}.
$$
This is the generating polynomial for staircase tableaux of size $n$.
We also use the symbol 
$Z_n(\alpha,\beta,\gamma,\delta;q)$ to denote the same quantity
with $u=1$.

We now review the definition of 
the (partially) asymmetric exclusion process \cite{Derrida1}, a classical
model in 
statistical mechanics.  This is a model of 
particles hopping on a lattice with $n$ sites,
where particles may hop to adjacent sites in the lattice,
and may enter and exit the lattice at both the left and right boundaries,
subject to the condition that at most one particle may occupy a given
site.
The model can be described by
a discrete-time Markov chain \cite{Derrida1,jumping} as follows.

\begin{definition}
Let $\alpha$, $\beta$, $\gamma$, $\delta$,  $q$, and $u$ be constants such that
$0 \leq \alpha \leq 1$, $0 \leq \beta \leq 1$,
$0 \leq \gamma \leq 1$, $0 \leq \delta \leq 1$,
$0 \leq q \leq 1$,
and $0 \leq u \leq 1$.
The ASEP is the Markov chain on the
$2^n$ words in the language
language $\{\circ, \bullet\}^*$,
with
transition probabilities:
\begin{itemize}
\item  If $X = A\bullet \circ B$ and
$Y = A \circ \bullet B$ then
$P_{X,Y} = \frac{u}{n+1}$ (particle hops right) and
$P_{Y,X} = \frac{q}{n+1}$ (particle hops left).
\item  If $X = \circ B$ and $Y = \bullet B$
then $P_{X,Y} = \frac{\alpha}{n+1}$ 
\item  If $X = B \bullet$ and $Y = B \circ$
then $P_{X,Y} = \frac{\beta}{n+1}$ 
\item  If $X = \bullet B$ and $Y = \circ B$
then $P_{X,Y} = \frac{\gamma}{n+1}$ 
\item  If $X = B \circ$ and $Y = B \bullet$
then $P_{X,Y} = \frac{\delta}{n+1}$ 

\item  Otherwise $P_{X,Y} = 0$ for $Y \neq X$
and $P_{X,X} = 1 - \sum_{X \neq Y} P_{X,Y}$.
\end{itemize}
\end{definition}



In the long time limit, the system reaches a steady state where all
the probabilities $P_n(\ttt_1, \ttt_2, \dots , \ttt_n)$ of finding
the system in configuration $\sigma=(\ttt_1, \ttt_2, \dots , \ttt_n)$ are
stationary. Let 
\begin{equation}\label{type}
Z_\sigma(\alpha,\beta,\gamma,\delta;q,u)= \sum_{\T\ {\rm of}\ {\rm type}\  \sigma}\wt(\T).
\end{equation}
This is just the generating polynomial for staircase tableaux of a given type.
As before, we lose no information by setting $u=1$, and in that case
let 
$Z_\sigma(\alpha,\beta,\gamma,\delta;q):= 
Z_\sigma(\alpha,\beta,\gamma,\delta;q,1)$.

\begin{theorem}\cite[Corteel, Williams]{CW3} \label{NewThm}
Consider any state $\sigma$ of
the ASEP with $n$ sites, where the parameters
$\alpha, \beta, \gamma, \delta, q$ and $u$ are general.
Then the steady state probability that the
ASEP is at state $\sigma$ is
precisely
\begin{equation*}
\frac{Z_\sigma(\alpha,\beta,\gamma,\delta;q,u)}{Z_n(\alpha,\beta,\gamma,\delta;q,u)}.
\end{equation*}
\end{theorem}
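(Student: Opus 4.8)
The plan is to prove this via the \emph{Matrix Ansatz} of Derrida, Evans, Hakim, and Pasquier, which reduces the determination of the ASEP stationary distribution to a problem in noncommutative algebra. Recall first that the chain is irreducible on $\{\circ,\bullet\}^n$ for positive parameters, so its stationary distribution is unique; and that $Z_n=\sum_\sigma Z_\sigma$, since every staircase tableau has a well-defined type. It therefore suffices to show that $Z_\sigma$ is the unnormalized stationary weight of $\sigma$, as the normalization by $Z_n$ is then automatic. By the remark following Definition \ref{weight} we may set $u=1$ throughout, recovering general $u$ afterward by homogeneity of $\wt$ in total degree $n(n+1)/2$.

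The Matrix Ansatz asserts that if one can find matrices $D,E$ and vectors $\bra{W},\ket{V}$ satisfying the bulk relation $DE-qED=D+E$ together with the boundary relations $\bra{W}(\alpha E-\gamma D)=\bra{W}$ and $(\beta D-\delta E)\ket{V}=\ket{V}$, then, writing $\sigma=(\ttt_1,\dots,\ttt_n)$ and setting $M_\bullet=D$, $M_\circ=E$, the product $\bra{W}\,M_{\ttt_1}M_{\ttt_2}\cdots M_{\ttt_n}\,\ket{V}$ equals the unnormalized stationary weight of $\sigma$. (The signs and factors here mirror exactly the transition rates in the definition of the ASEP: bulk hops at rates $u=1$ and $q$, left injection/removal at $\alpha,\gamma$, right removal/injection at $\beta,\delta$.) Thus my goal becomes to exhibit such matrices whose matrix products are precisely the polynomials $Z_\sigma$.

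The central step is to construct this solution combinatorially from staircase tableaux. I would build a tableau of type $\sigma$ one diagonal box at a time, reading from northeast to southwest, so that each $\bullet$-box (an $\alpha$ or $\delta$) corresponds to an application of $D$ and each $\circ$-box (a $\beta$ or $\gamma$) to an application of $E$; the matrix entries of $D$ and $E$ should record how the blank boxes in the newly created row and column are filled with $q$'s and $u$'s according to the rules of Definition \ref{weight}, while $\bra{W}$ and $\ket{V}$ encode the northeast and southwest boundaries. With a suitable indexing of the auxiliary states, the matrix element of $\bra{W}M_{\ttt_1}\cdots M_{\ttt_n}\ket{V}$ collects exactly one monomial $\wt(\T)$ for each staircase tableau $\T$ of type $\sigma$, so that this product equals $Z_\sigma$.

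The main obstacle is verifying the three algebraic relations for this combinatorial solution, and in particular the quadratic bulk relation $DE-qED=D+E$. This is precisely the point at which the seemingly ad hoc $q/u$ filling rules of Definition \ref{weight} must be shown to be forced rather than arbitrary: the product $DE$ corresponds to an adjacent $\bullet\circ$ pair, $qED$ to the swapped $\circ\bullet$ pair, and the difference $DE-qED$ must collapse, through a weight-preserving bijection on the affected sub-tableaux, to the two single-box contributions $D+E$. Establishing this local identity — together with the analogous, and easier, boundary relations carrying the factors $\alpha,\gamma$ on the left and $\beta,\delta$ on the right — is the technical heart of the argument. Once these relations are confirmed, the Matrix Ansatz identifies $Z_\sigma$ as the unnormalized stationary weight of $\sigma$, and dividing by $Z_n=\sum_\sigma Z_\sigma$ yields the claimed steady-state probability $Z_\sigma/Z_n$.
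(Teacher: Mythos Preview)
This theorem is quoted from \cite{CW3,CW4} and is not proved in the present paper, so there is no proof here to compare against directly; your outline does match the strategy of \cite{CW3,CW4} as described in this paper (Matrix Ansatz plus a combinatorial realization via staircase tableaux, cf.\ Theorem~\ref{th:refine} and the discussion around Corollary~\ref{spec:cor}).

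There is, however, a genuine gap in the relations you write down. The combinatorial matrices $D,E,\ket{V},\bra{W}$ built from staircase tableaux do \emph{not} satisfy the standard DEHP relations $DE-qED=D+E$, $\bra{W}(\alpha E-\gamma D)=\bra{W}$, $(\beta D-\delta E)\ket{V}=\ket{V}$. They satisfy the \emph{modified} Matrix Ansatz of \cite[Theorem~5.2]{CW4}, recorded here as Theorem~\ref{CWTh}: each right-hand side acquires the length-dependent factor $\lambda_n=\alpha\beta-\gamma\delta q^{n-1}$. Concretely,
\[
Z_{\sigma_1\bullet\circ\sigma_2}-qZ_{\sigma_1\circ\bullet\sigma_2}
=\lambda_{|\sigma_1|+|\sigma_2|+2}\bigl(Z_{\sigma_1\bullet\sigma_2}+Z_{\sigma_1\circ\sigma_2}\bigr),
\]
and similarly at the boundaries. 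Your claim that the matrix product equals $Z_\sigma$ \emph{and} that it satisfies the unmodified bulk relation cannot both hold. The fix is straightforward once noticed: either verify the modified relations and observe that the overall factor $\prod_{j=0}^{n-1}(\alpha\beta-\gamma\delta q^j)$ is the same for every $\sigma$ of length $n$ and hence cancels in the ratio $Z_\sigma/Z_n$, or pass to the normalized quantities $\widetilde{Z}_\sigma=Z_\sigma/\prod_j\lambda_j$ and check the standard ansatz for those (this is exactly the relationship used in the proof of Corollary~\ref{spec:cor}).

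A second, smaller point: you propose to verify the bulk relation via a local weight-preserving bijection on sub-tableaux. The paper explicitly notes (just after Theorem~\ref{CWTh}) that the proof in \cite{CW3,CW4} ``used a complicated induction and was not very combinatorial''; a clean bijective argument is given here only for the degenerate case $\delta=0$ (Theorem~\ref{explicit-d}). So while your bijective plan is the morally right picture, carrying it out for general $\delta$ is nontrivial and is not what the cited references actually do.
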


By Theorem \ref{NewThm}, we can call $Z_n(\alpha,\beta,\gamma,\delta;q,u)$ 
the {\it partition function} of the ASEP.


We now review the definition
of the Askey-Wilson polynomials; these are orthogonal polynomials
with five free parameters $a, b, c, d, q$, 
which reside at the top of the hierarchy of the one-variable
$q$-orthogonal polynomials in the Askey scheme \cite{AW,GR}.

\begin{remark}\label{substitution}
When working with Askey-Wilson polynomials, it will be convenient
to use three variables $x, \theta, z$, which are related to 
each other as follows:
\begin{equation*}
x = \cos \theta,~~~~~
z =e^{i\theta},~~~~~
x = \frac{z+z^{-1}}{2}
\end{equation*}
\end{remark}

\begin{definition}
The Askey-Wilson polynomial $P_n(x)=P_n(x;a,b,c,d\vert q)$
is explicitly defined to be
$$a^{-n}(ab,ac,ad;q)_n\
\sum_{k=0}^n \frac{(q^{-n},q^{n-1}abcd,ae^{i\theta},ae^{-i\theta};q)_k}
{(ab,ac,ad,q;q)_k}q^k,
$$
where $n$ is a non-negative integer and
\begin{eqnarray*}
(a_1,a_2,\cdots,a_s;q)_n=\prod_{r=1}^s \prod_{k=0}^{n-1} (1-a_rq^k).
\end{eqnarray*}
\end{definition}

For $|a|, |b|, |c|, |d| < 1$, 
the orthogonality is expressed by
\begin{eqnarray*}
\oint_C \frac{dz}{4\pi iz} w\left(\frac{z+z^{-1}}{2}\right)
P_m\left(\frac{z+z^{-1}}{2}\right)P_n\left(\frac{z+z^{-1}}{2}\right)
=\frac{h_n}{h_0} \delta_{mn},
\label{eqn:orthoointAW}
\end{eqnarray*}
where the integral contour $C$ is a closed path which
encloses the poles at $z=aq^k$, $bq^k$, $cq^k$, $dq^k$ $(k\in \Z_+)$
and excludes the poles at $z=(aq^k)^{-1}$, $(bq^k)^{-1}$, $(cq^k)^{-1}$,
$(dq^k)^{-1}$ $(k\in \Z_+)$, and where
\begin{align*}
h_0&=h_0(a,b,c,d,q)=
\frac{(abcd;q)_\infty}{(q,ab,ac,ad,bc,bd,cd;q)_\infty},\\
\frac{h_n}{h_0}&=
\frac{(1-q^{n-1}abcd)(q,ab,ac,ad,bc,bd,cd;q)_n}
{(1-q^{2n-1}abcd)(abcd;q)_n} ,\\
w(\cos\theta)&=
\frac{(e^{2i\theta},e^{-2i\theta};q)_\infty}
{h_0 (ae^{i\theta},ae^{-i\theta},be^{i\theta},be^{-i\theta},
ce^{i\theta},ce^{-i\theta},de^{i\theta},de^{-i\theta};q)_\infty}.\\
\end{align*}
(In the other parameter region, the orthogonality is continued analytically.)

\begin{remark}
We remark that our definition of the weight function above
differs slightly from the definition given in \cite{AW}; 
the weight function in \cite{AW} did not have the 
$h_0$ in the denominator.   Our convention simplifies some
of the formulas to come.
\end{remark}

\begin{definition}
The moments of the (weight function of the) Askey-Wilson polynomials --
which we sometimes refer to as simply the \emph{Askey-Wilson moments} --
are defined by 
\begin{eqnarray*}
\mu_n(a,b,c,d\vert q) = \oint_C \frac{dz}{4\pi iz} w\left(\frac{z+z^{-1}}{2}\right)
\left(\frac{z+z^{-1}}{2}\right)^k.
\label{eqn:moment}
\end{eqnarray*}
\end{definition}

The combinatorial formula given in \cite{CW3, CW4} is the following.
\begin{theorem}\cite[Corteel, Williams]{CW3} \label{moments}
The $n$th Askey-Wilson moment is given by
\begin{equation*}
\mu_n(a,b,c,d\vert q) = \sum_{\ell=0}^n (-1)^{n-\ell}{n \choose \ell} 
\left(\frac{1-q}{2}\right)^{\ell}
 \frac{{Z}_{\ell}(\alpha,\beta,\gamma,\delta;q)}{\prod_{j=0}^{\ell-1} (\alpha \beta - \gamma \delta q^j)}, 
\end{equation*}
where \begin{equation}\label{subs1}
\alpha=\frac{1-q}{1+ac+a+c},~~~~~ 
\beta=\frac{1-q}{1+bd+b+d},~~~~~
\gamma=\frac{-(1-q)ac}{1+ac+a+c},~~~~~
\delta=\frac{-(1-q)bd}{1+bd+b+d}.
\end{equation}
\end{theorem}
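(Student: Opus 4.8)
The plan is to route the computation through the Matrix Ansatz for the ASEP, identify the resulting bilinear form with the Askey--Wilson integral via the representation of Uchiyama--Sasamoto--Wadati, and then recover the stated formula by a single binomial inversion. First I would rewrite the partition function operatorially. Underlying Theorem~\ref{NewThm} is the Derrida--Evans--Hakim--Pasquier matrix ansatz: there exist operators $D,E$ and vectors $\bra{W},\ket{V}$ with
\begin{equation*}
DE-qED=D+E,\qquad \bra{W}(\alpha E-\gamma D)=\bra{W},\qquad (\beta D-\delta E)\ket{V}=\ket{V},
\end{equation*}
for which $Z_\sigma=\bra{W}\prod_i X_i\ket{V}$ with $X_i\in\{D,E\}$ encoding the state $\sigma$. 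Summing over all states gives
\begin{equation*}
Z_n(\alpha,\beta,\gamma,\delta;q)=\bra{W}(D+E)^n\ket{V}.
\end{equation*}

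Next I would invoke the explicit representation of this algebra due to Uchiyama--Sasamoto--Wadati (reorganized through the operator techniques of Ismail and the third author). Under the substitution (\ref{subs1}) the operator $D+E$ is represented, in the basis of Askey--Wilson polynomials, by a Jacobi matrix whose spectral measure is the Askey--Wilson weight $w$, while the boundary vectors $\bra{W}$ and $\ket{V}$ single out the vacuum. Expanding $\bra{W}(D+E)^n\ket{V}$ in this representation turns it into the integral of $\bigl(\tfrac{2}{1-q}(1+x)\bigr)^n$ against $w$, the shift and the factor $\tfrac{2}{1-q}$ recording that $\tfrac{1-q}{2}(D+E)$ acts as $1+x$ with $x=\cos\theta$. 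The level-dependent normalization of the boundary pairing collapses to exactly $\prod_{j=0}^{n-1}(\alpha\beta-\gamma\delta q^j)$; indeed, under (\ref{subs1}) one computes
\begin{equation*}
\alpha\beta-\gamma\delta q^j=\frac{(1-q)^2\,(1-abcd\,q^j)}{(1+ac+a+c)(1+bd+b+d)},
\end{equation*}
so the product reproduces the $(abcd;q)_n$ factor native to the Askey--Wilson normalization $h_n/h_0$. Assembling these facts yields the key identity
\begin{equation*}
\left(\tfrac{1-q}{2}\right)^n\frac{Z_n(\alpha,\beta,\gamma,\delta;q)}{\prod_{j=0}^{n-1}(\alpha\beta-\gamma\delta q^j)}=\oint_C\frac{dz}{4\pi i z}\,w\!\left(\tfrac{z+z^{-1}}{2}\right)\left(1+\tfrac{z+z^{-1}}{2}\right)^n.
\end{equation*}

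I would then expand the right-hand side by the binomial theorem in $x=\tfrac{z+z^{-1}}{2}$, which by the definition of $\mu_\ell$ gives $\sum_{\ell=0}^n\binom n\ell\mu_\ell$. Writing $g_n:=\left(\tfrac{1-q}{2}\right)^n Z_n\big/\prod_{j=0}^{n-1}(\alpha\beta-\gamma\delta q^j)$, the key identity reads $g_n=\sum_{\ell=0}^n\binom n\ell\mu_\ell$. Applying the standard binomial inversion, $\mu_n=\sum_{\ell=0}^n(-1)^{n-\ell}\binom n\ell g_\ell$, produces precisely the asserted formula.

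The main obstacle is the middle step: establishing the Uchiyama--Sasamoto--Wadati tridiagonalization of $D+E$ under (\ref{subs1}) and, above all, bookkeeping the normalization so that the boundary pairing collapses to exactly $\prod_{j=0}^{n-1}(\alpha\beta-\gamma\delta q^j)$ with the correct power of $\tfrac{2}{1-q}$ and the correct shift $x\mapsto 1+x$. Everything else --- the operator rewriting of $Z_n$, the binomial expansion of $(1+x)^n$, and the final inversion --- is formal.
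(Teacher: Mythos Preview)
Your approach is correct and matches the one implicit in the paper. Note that the paper does not give a self-contained proof of this statement (it is cited as a result of \cite{CW3}), but its Section~\ref{newformula} develops precisely the machinery you describe: the Uchiyama--Sasamoto--Wadati tridiagonal representation, the identification $\tfrac{1-q}{2}(\widetilde D+\widetilde E)=\1+\tfrac{\dd+\ee}{2}$ acting as $1+x$, and the normalization $\bra{\widetilde W}(\widetilde D+\widetilde E)^n\ket{\widetilde V}=Z_n/\prod_{j=0}^{n-1}(\alpha\beta-\gamma\delta q^j)$. Specializing Corollary~\ref{spec:cor} to $y=1$ gives exactly your ``key identity'' $g_n=\sum_\ell \binom n\ell\mu_\ell$, and binomial inversion then yields the stated formula. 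One minor caveat: in the paper's conventions it is the matrices $\widetilde D,\widetilde E,\ket{\widetilde V},\bra{\widetilde W}$ (not $D,E,\ket V,\bra W$) that satisfy the unmodified DEHP relations you wrote down, and for those one has $\bra{\widetilde W}(\widetilde D+\widetilde E)^n\ket{\widetilde V}=Z_n/\prod_j(\alpha\beta-\gamma\delta q^j)$ rather than $Z_n$ itself; your text collapses this distinction, but since you reinstate the product in the key identity the argument is unaffected.
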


Note that this formula is not totally 
satisfactory as it has an alternating sum.

In the first half of this paper we give combinatorial and explicit
formulas for Askey-Wilson polynomials and generating functions
of staircase tableaux, including 
a combinatorial
formula for the moments ``on the nose".

To give this  formula, we define $t(\T)$ to be the number of (black) particles
in $\type(\T)$.
For example  the tableau $\T$ in Figure \ref{staircase}
has $t(\T)=3$.  We define the {\it fugacity partition function}
of the ASEP to be 
$$Z_n(y;\alpha,\beta,\gamma,\delta;q)=
\sum_{\T\ {\rm of } \ {\rm size}\ n}\wt(\T)y^{t(\T)},
$$
because this formula is a $y$-analogue of the partition function.

The exponent of $y$ 
keeps track of the number of black particles
in each state.

\begin{theorem}\label{moments2}
The $n^{th}$ Askey-Wilson moment    is 
equal to
$$
\mu_n(a,b,c,d\vert q)= 
\frac{(1-q)^n}{2^ni^n\prod_{j=0}^{n-1}(\alpha\beta-\gamma\delta q^j)}
Z_n(-1;\alpha,\beta,\gamma,\delta;q),
$$
where $i^2=-1$ and
\begin{equation}\label{subs2}
\alpha=\frac{1-q}{1-ac+ai+ci},~~~~~
\beta=\frac{1-q}{1-bd-bi-di},~~~~~
\gamma=\frac{(1-q)ac}{1-ac+ai+ci},~~~~~
\delta=\frac{(1-q)bd}{1-bd-bi-di}.
\end{equation}
\end{theorem}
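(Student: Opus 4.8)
The plan is to deduce Theorem \ref{moments2} from Theorem \ref{moments}. Since Theorem \ref{moments} already identifies the alternating sum (at the substitution \eqref{subs1}) with $\mu_n(a,b,c,d\vert q)$, it suffices to evaluate that same $\mu_n$ as the single quantity $Z_n(-1;\alpha,\beta,\gamma,\delta;q)$ at \eqref{subs2}. The engine is the matrix ansatz underlying both the ASEP and Theorem \ref{moments}: there are operators $\D,\E$, a row vector $\langle W|$ and a column vector $|V\rangle$ (depending on $\alpha,\beta,\gamma,\delta,q$) obeying $\D\E-q\E\D=\D+\E$ together with $\langle W|(\alpha\E-\gamma\D)=\langle W|$ and $(\beta\D-\delta\E)|V\rangle=|V\rangle$, normalized by $\langle W|V\rangle=1$, such that
$$\frac{Z_\ell(\alpha,\beta,\gamma,\delta;q)}{\prod_{j=0}^{\ell-1}(\alpha\beta-\gamma\delta q^j)}=\langle W|(\D+\E)^\ell|V\rangle,\qquad \frac{Z_\ell(y;\alpha,\beta,\gamma,\delta;q)}{\prod_{j=0}^{\ell-1}(\alpha\beta-\gamma\delta q^j)}=\langle W|(y\D+\E)^\ell|V\rangle.$$
(One checks these at $\ell=0,1$ directly from the two-by-two boundary system.)

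First I would collapse the alternating sum. Feeding the first identity into Theorem \ref{moments} and using the binomial theorem (the operator $\frac{1-q}{2}(\D+\E)$ commutes with $\1$) gives, at \eqref{subs1},
$$\mu_n=\sum_{\ell=0}^n\binom{n}{\ell}(-1)^{n-\ell}\Big(\tfrac{1-q}{2}\Big)^\ell\langle W|(\D+\E)^\ell|V\rangle=\Big\langle W\Big|\Big(\tfrac{1-q}{2}(\D+\E)-\1\Big)^n\Big|V\Big\rangle.$$
Now I would apply the Uchiyama--Sasamoto--Wadati change of variables $\D=\frac{1}{1-q}(\1+\dd)$, $\E=\frac{1}{1-q}(\1+\ee)$. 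Its decisive feature is that the identity operator is treated differently by the two relevant combinations: $\frac{1-q}{2}(\D+\E)-\1=\frac{\dd+\ee}{2}$, so the shift is absorbed, whereas $\frac{1-q}{2}(\E-\D)=\frac{\ee-\dd}{2}$ needs no shift at all. This is exactly why the full partition function ($y=1$) forces an alternating sum while the fugacity at $y=-1$ returns the moment ``on the nose'': at $y=-1$ the two identity parts cancel. Thus $\mu_n=\langle W_1|\big(\frac{\dd_1+\ee_1}{2}\big)^n|V_1\rangle$, while the right-hand side of Theorem \ref{moments2}, using the fugacity identity at \eqref{subs2}, equals $\frac{1}{2^ni^n}\langle W_2|(\ee_2-\dd_2)^n|V_2\rangle$ (subscripts $1,2$ record the substitution).

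It remains to match these across the two substitutions, and the key observation is that \eqref{subs2} is obtained from \eqref{subs1} by $a\mapsto ai,\ c\mapsto ci,\ b\mapsto -bi,\ d\mapsto -di$: indeed $1+ac+a+c=(1+a)(1+c)\mapsto(1+ai)(1+ci)=1-ac+ai+ci$ and $1+bd+b+d=(1+b)(1+d)\mapsto(1-bi)(1-di)=1-bd-bi-di$, while the $\gamma,\delta$ numerators change sign because $ac\mapsto-ac$ and $bd\mapsto-bd$. I would then push this substitution through the explicit tridiagonal forms of $\dd,\ee$ and through $\langle W|,|V\rangle$ and verify a transformation law of the form $\dd_1+\ee_1=-i\,(\ee_2-\dd_2)$, with the boundary data matching; raising to the $n$th power then supplies $(-i)^n=i^{-n}$, yielding $\langle W_1|(\dd_1+\ee_1)^n|V_1\rangle=i^{-n}\langle W_2|(\ee_2-\dd_2)^n|V_2\rangle$, which is equivalent to Theorem \ref{moments2}.

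The main obstacle is precisely this last verification: showing that $a\mapsto ai$, $c\mapsto ci$, $b\mapsto-bi$, $d\mapsto-di$ acts on the concrete operators $\dd,\ee$ (and on $\langle W|,|V\rangle$) in the asserted way, with the opposite signs of $i$ on the $(a,c)$-side versus the $(b,d)$-side producing the sign flip that converts the sum $\dd_1+\ee_1$ into the difference $\ee_2-\dd_2$. This also requires careful tracking of scalar prefactors, since the normalizing products differ between the two substitutions only in the denominator, namely $[(1+a)(1+b)(1+c)(1+d)]^{\ell}$ versus $[(1+ai)(1+ci)(1-bi)(1-di)]^{\ell}$ (both share the numerator $(1-q)^{2\ell}(abcd;q)_\ell$), and these must be reconciled against the scalars arising from the change of variables in the boundary vectors. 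Once the transformation law for $\dd,\ee$ is pinned down, the remaining steps are routine.
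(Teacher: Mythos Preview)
Your route is close to the paper's but not identical, and it contains a subtle gap.  The paper does not start from Theorem~\ref{moments}; instead it proves a general-$y$ statement (Proposition~\ref{thm1} and Corollary~\ref{spec:cor}) relating the $n$th moment of the shifted family $P_m(2\sqrt y\,x+1+y;\,a/\sqrt y,\,b\sqrt y,\,c/\sqrt y,\,d\sqrt y\mid q)$ to $Z_n(y;\alpha,\beta,\gamma,\delta;q)$, and only then sets $y=-1$ and renames variables.  Your collapse of the alternating sum via the binomial theorem simply rederives the direct identity $\mu_n=2^{-n}\langle\widetilde W|(\dd+\ee)^n|\widetilde V\rangle$ of Remark~\ref{matrix-moments}; after that point both arguments must compare $\langle\widetilde W|(\dd+\ee)^n|\widetilde V\rangle$ at $(a,b,c,d)$ with $\langle\widetilde W|(\ee-\dd)^n|\widetilde V\rangle$ at $(ai,-bi,ci,-di)$, and your identification of this change of variables is exactly the one the paper uses.

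The gap is in the proposed ``transformation law'' $\dd_1+\ee_1=-i(\ee_2-\dd_2)$.  This fails as a matrix identity: for the superdiagonal one has $d_n^\sharp+e_n^\sharp=\mathcal A_n(a,b,c,d)$, whereas $-i\bigl(e_n^\sharp-d_n^\sharp\bigr)$ at $(ai,-bi,ci,-di)$ equals $i\,\dfrac{1-q^nac}{1+q^nac}\,\mathcal A_n(ai,-bi,ci,-di)$, and since $ac\mapsto -ac$, $bd\mapsto -bd$ under the substitution, a short computation with $\mathcal A_n$ shows these differ by a nontrivial factor.  What \emph{is} true, and is all that is needed, is that the two tridiagonal matrices have the same diagonal entries and the same \emph{products} of adjacent off-diagonal entries; by Remark~\ref{same-moments} this forces equal moments.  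These two facts are precisely the $y=-1$ specializations of Lemmas~\ref{lem1} and~\ref{lem2}.  So your plan goes through once you replace the matrix identity by this weaker moment-equivalence, at which point it merges with the paper's argument.  Incidentally, since $\langle\widetilde W|=(1,0,0,\dots)$ and $|\widetilde V\rangle=(1,0,0,\dots)^T$ are parameter-free, the ``boundary-data matching'' you flagged is automatic; the only genuine work is the pair of lemmas.
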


Note that the Askey-Wilson moments are in general rational 
expressions (with a simple denominator); the coefficients are 
not all positive, but they are all real.  See Example
\ref{moments-example}.  However,
it's not at all clear from  Theorem \ref{moments2}
that the coefficients are real.

Using work of Ismail and the third author \cite{IS},
we also give  explicit 
formulas for both the Askey-Wilson moments and the 
fugacity partition 
function 
of the ASEP.

\begin{theorem}\label{moments3}
The moments $\mu_n(a,b,c,d\vert q)$  are
\begin{eqnarray*}
\frac{1}{2^n}\sum_{k=0}^n \frac{(ab,ac,ad;q)_k}{(abcd;q)_k}q^k
\sum_{j=0}^k q^{-j^2}a^{-2j} \frac{(q^{j}a+q^{-j}/a)^n}{(q,q^{-2j+1}/a^2;q)_{j}(q,q^{2j+1} a^2;q)_{k-j}}.
\end{eqnarray*}
\end{theorem}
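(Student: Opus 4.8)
The plan is to combine the two formulae already available to us. By Theorem~\ref{moments2} we have an expression for $\mu_n$ in terms of the specialized fugacity partition function $Z_n(-1;\alpha,\beta,\gamma,\delta;q)$, while Theorem~\ref{moments} (the original formula of \cite{CW3}) expresses $\mu_n$ as an alternating binomial sum of the ordinary $Z_\ell$. Neither of these is yet an explicit closed form, so the essential new input must be an explicit evaluation coming from the work of Ismail and the third author~\cite{IS}. The explicit formula in Theorem~\ref{moments3} has the characteristic shape of an Askey-Wilson moment computed by residues: the outer sum over $k$ with the factor $(ab,ac,ad;q)_k/(abcd;q)_k\,q^k$ matches the structure appearing in the definition of $P_n(x)$, and the inner sum over $j$ with the Gaussian $q^{-j^2}a^{-2j}$ and the quantity $(q^j a + q^{-j}/a)^n$ is exactly what one obtains by summing over the residues at the poles $z = a q^j$ of the weight function $w$.

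First I would set up the contour-integral definition of $\mu_n$ directly, writing $x = (z+z^{-1})/2$ and expanding the integrand. The weight function $w(\cos\theta)$ has simple poles at $z = aq^k, bq^k, cq^k, dq^k$ inside $C$; the moment $\mu_n = \oint_C \frac{dz}{4\pi i z} w\left(\frac{z+z^{-1}}{2}\right)\left(\frac{z+z^{-1}}{2}\right)^n$ is therefore the sum of the residues at these poles. Because of the symmetry of the problem it suffices to compute the contribution of the poles $z = aq^j$; the factor $(q^j a + q^{-j}/a)^n = (aq^j + (aq^j)^{-1})^n$ is precisely $(2x)^n$ evaluated at $z = aq^j$, which explains the $2^{-n}$ prefactor and the power-of-$n$ term. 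I would compute $\mathrm{Res}_{z=aq^j} \frac{w(x)}{z}$ using the standard infinite-product identities for $(\cdot;q)_\infty$, taking care with the factor $(e^{2i\theta}, e^{-2i\theta};q)_\infty = (z^2, z^{-2};q)_\infty$ in the numerator of $w$; splitting the Pochhammer symbols into the parts that vanish or survive at $z = aq^j$ produces the two split $q$-Pochhammer factors $(q, q^{-2j+1}/a^2;q)_j$ and $(q, q^{2j+1}a^2;q)_{k-j}$ in the denominator.

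The cleanest route, rather than computing every residue from scratch, is to invoke the explicit moment evaluation of \cite{IS} as a black box and then verify that its terms reorganize into the stated double sum. Ismail and the third author give the Askey-Wilson moments as a sum over residues which, after collecting the contributions associated with the pole $z=a$ and its $q$-translates, is naturally indexed by the pair $(k,j)$ with $0 \le j \le k \le n$. I would match their normalization to ours: our $h_0$-in-the-denominator convention (see the Remark following the orthogonality relation) removes an overall constant, and the factor $(ab,ac,ad;q)_k/(abcd;q)_k$ is exactly the ratio that appears once the $a$-residues are grouped by how many $q$-shifts are taken. The main obstacle will be this bookkeeping step: tracking how the infinite products in $w$ telescope into the finite split Pochhammer symbols, and confirming that the four families of poles $a,b,c,d$ all collapse into the single symmetric expression written in Theorem~\ref{moments3}. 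Once the residue sum is shown to equal the displayed double sum for the $a$-poles, the symmetry of $\mu_n$ under the natural involutions on $\{a,b,c,d\}$ together with the uniqueness of the moment guarantees the formula, completing the proof.
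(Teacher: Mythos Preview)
Your proposal has a genuine gap: the residue picture you sketch does not match either what \cite{IS} actually provides or the structure of the double sum you are trying to reach. The Ismail--Stanton paper does not give an ``explicit moment evaluation'' to be invoked as a black box; what it gives is a $q$-Taylor theorem, namely an explicit formula for the coefficients $p_k$ in the expansion $p(x)=\sum_{k=0}^n p_k\,\phi_k(x;a)$ where $\phi_k(x;a)=(ae^{i\theta},ae^{-i\theta};q)_k$. That is the key object, and it never appears in your outline. The paper's proof is: (i) observe that $w(x;a,b,c,d)\phi_k(x;a)=w(x;aq^k,b,c,d)\cdot h_0(aq^k,\dots)/h_0(a,\dots)$, so $\oint \phi_k w\,dz/(4\pi i z)=(ab,ac,ad;q)_k/(abcd;q)_k$; (ii) use \cite{IS} to write down $p_k$ explicitly, which involves the $q$-difference operator $D_q^k$ evaluated at $x_k=(aq^{k/2}+a^{-1}q^{-k/2})/2$ and produces the inner $j$-sum with its $q^{-j^2}a^{-2j}$ and the split Pochhammer denominators; (iii) multiply and sum over $k$; (iv) specialize $p(x)=x^n$. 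The outer $k$-sum indexes the $\phi_k$-expansion, the inner $j$-sum indexes the terms in the $q$-difference operator---neither is a sum over residues of $w$.

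Your residue approach runs into concrete trouble. The integrand has \emph{infinitely} many poles $z=aq^j$, $j\ge 0$, inside $C$ (plus those for $b,c,d$, plus $z=0$), so a direct residue sum is not a finite double sum truncated at $n$. Moreover, the displayed formula is \emph{not} symmetric in $a,b,c,d$: it singles out $a$ (indeed the paper lists finding a manifestly symmetric formula as an open problem), so the argument ``by symmetry it suffices to sum the $a$-poles and the four families collapse into a symmetric expression'' cannot be right. The $a$-privileged shape comes from the choice of expansion basis $\phi_k(x;a)$, not from throwing away the $b,c,d$ residues.
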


\begin{theorem}\label{explicit-Z}
The fugacity partition function $Z_n(y;\alpha,\beta,\gamma,\delta;q)$ of 
the ASEP is
\begin{align*}
\label{Zformula}
Z_n(y;\alpha,\beta,\gamma,\delta;q)
=&(abcd;q)_n \biggl(\frac{\alpha\beta}{1-q}\biggr)^n
\sum_{k=0}^n\frac{(ab,ac/y,ad;q)_k}{(abcd;q)_k}q^k\\
&\times\sum_{j=0}^k q^{-j^2}(a^2/y)^{-j}
\frac{(1+y+q^{j}a+q^{-j}y/a)^n}{(q,q^{-2j+1}y/a^2;q)_{j}
(q,a^2q^{1+2j}/y;q)_{k-j}},
\end{align*}
where 
$$
a=\frac{1-q-\alpha+\gamma+\sqrt{(1-q-\alpha+\gamma)^2+4\alpha\gamma}}{2\alpha},~~~~ 
b=\frac{1-q-\beta+\delta+\sqrt{(1-q-\beta+\delta)^2+4\beta\delta}}{2\beta},
$$
\begin{equation}
c=\frac{1-q-\alpha+\gamma-\sqrt{(1-q-\alpha+\gamma)^2+4\alpha\gamma}}{2\alpha},~~~~ 
d=\frac{1-q-\beta+\delta-\sqrt{(1-q-\beta+\delta)^2+4\beta\delta}}{2\beta}.
\label{eqGPfor}
\end{equation}
\label{GPfor}
(Note that these expressions for $a,b,c,d$  invert the 
transformation given in Theorem \ref{moments}.)
\end{theorem}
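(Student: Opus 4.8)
The plan is to realize $Z_n(y;\alpha,\beta,\gamma,\delta;q)$ as an explicit, $y$-independent prefactor times a single moment of a $y$-deformed Askey--Wilson weight, and then to evaluate that moment by the residue method of Ismail and Stanton \cite{IS} that already underlies Theorem \ref{moments3}. By the matrix ansatz \cite{Derrida1}, which the staircase tableaux realize (Theorem \ref{NewThm} and \cite{CW3}), there are operators $\D,\E$ satisfying $\D\E-q\E\D=\D+\E$ together with boundary vectors obeying $\bra{W}(\alpha\E-\gamma\D)=\bra{W}$ and $(\beta\D-\delta\E)\ket{V}=\ket{V}$; since $y^{t(\T)}$ weights particles,
\[
Z_n(y;\alpha,\beta,\gamma,\delta;q)=\bra{W}(y\D+\E)^n\ket{V}.
\]
Setting $\dd=(1-q)\D-\1$ and $\ee=(1-q)\E-\1$ gives $\dd\ee-q\ee\dd=1-q$ and $(1-q)(y\D+\E)=y\dd+\ee+(1+y)\1$, so that $(1-q)^nZ_n(y;\alpha,\beta,\gamma,\delta;q)=\bra{W}(y\dd+\ee+(1+y)\1)^n\ket{V}$.

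I would next identify this matrix element as a deformed Askey--Wilson moment. For $y=1$ this is the inverse of the binomial transform in Theorem \ref{moments}: since $\sum_\ell\binom{n}{\ell}\mu_\ell=\oint_C\frac{dz}{4\pi i z}\,w\,(1+x)^n$, $2(1+x)=(1+z)(1+z\inv)$, and $\prod_{j=0}^{n-1}(\alpha\beta-\gamma\delta q^{j})=(\alpha\beta)^n(abcd;q)_n$ (immediate from \eqref{subs1}, as $\alpha\beta-\gamma\delta q^{j}=\alpha\beta(1-abcd\,q^{j})$), inversion yields $Z_n(\alpha,\beta,\gamma,\delta;q)=(abcd;q)_n(\alpha\beta/(1-q))^n\oint_C\frac{dz}{4\pi i z}\,w\,[(1+z)(1+z\inv)]^n$. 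For general $y$ the boundary vectors $\bra{W},\ket{V}$---and hence this prefactor---are untouched by the fugacity, while gauging the bulk operator by $y$ replaces the spectral variable $z+z\inv$ by $z+yz\inv$ and rescales the parameters by $\sqrt y$, namely $(a,b,c,d)\mapsto(a/\sqrt y,\,b\sqrt y,\,c/\sqrt y,\,d\sqrt y)$. The merit of this gauge is that it fixes $ab$, $ad$, $abcd$ while sending $ac\mapsto ac/y$ and $a^{2}\mapsto a^{2}/y$, and the shifted spectral variable factors as $(1+y)+(z+yz\inv)=(1+z)(1+yz\inv)$, which at the pole $z=q^{j}a$ equals $1+y+q^{j}a+q^{-j}y/a$. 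Thus
\[
Z_n(y;\alpha,\beta,\gamma,\delta;q)=(abcd;q)_n\Bigl(\tfrac{\alpha\beta}{1-q}\Bigr)^n\oint_C\frac{dz}{4\pi i z}\,w_y(z)\,\bigl[(1+z)(1+yz\inv)\bigr]^n,
\]
where $w_y$ is the weight at the rescaled parameters.

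It remains to evaluate this integral, and here I would run the Ismail--Stanton residue computation \cite{IS} verbatim, but at the rescaled parameters: deforming $C$ and collecting the residues at $z=q^{j}a$ (only finitely many of which contribute to a given moment, producing the truncations $0\le j\le k\le n$), the pole $z=q^{j}a$ supplies the value $1+y+q^{j}a+q^{-j}y/a$, the $q$-Gaussian weight $q^{-j^{2}}(a^{2}/y)^{-j}/[(q,q^{-2j+1}y/a^{2};q)_j(q,a^{2}q^{1+2j}/y;q)_{k-j}]$, and the outer factor $(ab,ac/y,ad;q)_k/(abcd;q)_k\,q^{k}$; these are exactly the images under $(a,b,c,d)\mapsto(a/\sqrt y,b\sqrt y,c/\sqrt y,d\sqrt y)$ of the corresponding terms in Theorem \ref{moments3}. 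Collecting them gives the stated double sum. Finally, that \eqref{eqGPfor} inverts \eqref{subs1} is checked by clearing denominators: $a,c$ are the two roots of $\alpha t^{2}-(1-q-\alpha+\gamma)t-\gamma=0$, and $b,d$ those of the analogous quadratic in $\beta,\delta$.

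The crux is the second paragraph: showing rigorously that the fugacity acts as the $\sqrt y$-gauge on the bulk operator while leaving the boundary vectors---and therefore the prefactor $(abcd;q)_n(\alpha\beta/(1-q))^n$---intact, so that it simultaneously produces the parameter change $c\mapsto c/y$, the weight factor $a^{2}\mapsto a^{2}/y$, and the spectral shift $z+z\inv\mapsto z+yz\inv$. Once this dictionary is in place the factorization $1+y+z+yz\inv=(1+z)(1+yz\inv)$ keeps every mass-point evaluation transparent and the Ismail--Stanton bookkeeping carries over unchanged; the normalization identity $\prod_{j=0}^{n-1}(\alpha\beta-\gamma\delta q^{j})=(\alpha\beta)^n(abcd;q)_n$ and the quadratic inversion \eqref{eqGPfor} are then routine.
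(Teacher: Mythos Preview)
Your approach is essentially the paper's own: relate $Z_n(y;\alpha,\beta,\gamma,\delta;q)$ to an Askey--Wilson moment at the rescaled parameters $(a/\sqrt{y},b\sqrt{y},c/\sqrt{y},d\sqrt{y})$, then evaluate that moment via the Ismail--Stanton expansion (Proposition~\ref{expansion}) with $p(x)=(1+y+2\sqrt{y}\,x)^n$. Your factorization $(1+z)(1+yz^{-1})$ is exactly the paper's $1+y+\sqrt{y}(z+z^{-1})$ after the change of spectral variable $z\mapsto z/\sqrt{y}$, so the pole evaluations and the resulting double sum agree on the nose.

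The one place where your write-up is a sketch and the paper supplies an actual argument is your ``crux'' paragraph. You assert that inserting the fugacity $y$ into $y\D+\E$ is equivalent to the $\sqrt{y}$-gauge on the Askey--Wilson data while leaving the prefactor $(abcd;q)_n(\alpha\beta/(1-q))^n$ untouched. The paper does not take this for granted: it proves it by two explicit identities on the USW tridiagonal entries, namely $y\,d_n^{\natural}+e_n^{\natural}=\sqrt{y}\,(D_n^{\natural}+E_n^{\natural})$ and $(y\,d_n^{\sharp}+e_n^{\sharp})(y\,d_n^{\flat}+e_n^{\flat})=y\,(D_n^{\sharp}+E_n^{\sharp})(D_n^{\flat}+E_n^{\flat})$ (Lemmas~\ref{lem1} and~\ref{lem2}), which feed into Proposition~\ref{thm1} and Corollary~\ref{spec:cor}. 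These lemmas are precisely the verification that the diagonal and off-diagonal pieces of the three-term recurrence transform correctly under $(a,b,c,d)\mapsto(a/\sqrt{y},b\sqrt{y},c/\sqrt{y},d\sqrt{y})$; once they are in hand, the matrix-ansatz comparison of \cite{CW4} gives the $y$-independent prefactor you wrote. So your plan is correct, but to make it a proof you should replace the phrase ``gauging the bulk operator by $y$'' with the two concrete identities above (or an equivalent computation).
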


In the second half of this paper we explore the wonderful 
combinatorial properties of staircase tableaux.  For example,
when we specialize some of the variables in the generating polynomial
$Z_n(y;\alpha,\beta,\gamma,\delta; q)$ for staircase tableaux, 
we get some nice formulas and combinatorial numbers;
see Table \ref{partition-table} below.
The reference for each statement in the table
is given in the rightmost column.  A few of the simple statements we leave
as exercises.

\begin{table}[h]
\begin{tabular}{|p{.3cm}|p{.3cm}|p{.3cm}|p{.4cm}|p{.4cm}|p{.4cm}|p{6cm}|p{5.8cm}|}
\noalign{\smallskip}
\hline
\noalign{\smallskip}
$\alpha$ & $\beta$ & $\gamma$ & $\delta$ & $q$ & $y$ & $Z_n(y;\alpha,\beta,\gamma,\delta; q)$ & Reference\\
\noalign{\smallskip}
\hline
\noalign{\smallskip}
\hline
\noalign{\smallskip}
$\alpha$ & $\beta$ & $\gamma$ & $\delta$ & $1$ & $1$ & $\prod_{j=0}^{n-1} (\alpha+\beta+\gamma+\delta+j(\alpha+\gamma)(\beta+\delta))$ & Theorem \ref{partition} \\
\noalign{\smallskip}
\hline
\noalign{\smallskip}
$\alpha$ & $\beta$ & $\gamma$ & $-\beta$ & $q$ & $1$ & $ \prod_{j=0}^{n-1} (\alpha+ q^j \gamma)$ & Proposition \ref{d-b}\\
\noalign{\smallskip}
\hline
\noalign{\smallskip}
$\alpha$ & $\beta$ & $\gamma$ & $\beta$ & $q$ & $-1$ & $(-1)^n \prod_{j=0}^{n-1} (\alpha- q^j \gamma)$ & Proposition \ref{y-1}\\
\noalign{\smallskip}
\hline
\noalign{\smallskip}
$\alpha$ & $0$ & $\gamma$ & $0$ & $q$ & $y$ & $\prod_{j=0}^{n-1} (y\alpha+q^j \gamma)$ & Exercise.\\
\noalign{\smallskip}
\hline
\noalign{\smallskip}
$0$ & $\beta$ & $\gamma$ & $0$ & $q$ & $y$ & $\prod_{j=0}^{n-1} (\beta+\beta \gamma [j]_q + \gamma q^j)$ & See \cite{SDH}.  \\
\noalign{\smallskip}
\hline
\noalign{\smallskip}
$\alpha$ & $\beta$ & $0$ & $0$ & $q$ & $y$ & \cite[Theorem 1.3.1]{JV} & See \cite{JV}.\\
\noalign{\smallskip}
\hline
\noalign{\smallskip}
$1$ & $1$ & $0$ & $0$ & $q$ & $y$ & $\sum_{k=1}^{n+1} E_{k,n+1}(q) y^{k-1}$ &
     See \cite[Section 5]{Williams} for 
     definition of $E_{k,n}(q)$; use 
    \cite[Theorem 8]{Corteel}.\\
\noalign{\smallskip}
\hline
\noalign{\smallskip}
$1$ & $1$ & $0$ & $0$ & $-1$ & $y$ & $(y+1)^n$ & 
 \cite[Proposition 5.7]{Williams} \\
\noalign{\smallskip}
\hline
\noalign{\smallskip}
$\alpha$ & $\alpha$ & $\alpha$ & $\alpha$ & $-1$ & $y$ & $0$ \text{ for }$n\geq 3$ &  See Proposition \ref{0}. \\
\noalign{\smallskip}
\hline
\noalign{\smallskip}
$1$ & $1$ & $1$ & $1$ & $1$ & $y$ & $2^n (y+1)^n n!$ & Exercise. \\
\noalign{\smallskip}
\hline
\noalign{\smallskip}
$1$ & $1$ & $1$ & $1$ & $1$ & $1$ & $4^n n! = 4n!!!!$ & Follows from Theorem \ref{partition}. \\
\noalign{\smallskip}
\hline
\noalign{\smallskip}
$1$ & $1$ & $1$ & $0$ & $1$ & $1$ & $(2n+1)!!$ & Follows from Theorem 
\ref{partition}. \\
\noalign{\smallskip}
\hline
\noalign{\smallskip}
$1$ & $1$ & $0$ & $0$ & $1$ & $1$ & $(n+1)!$ & Follows from Theorem \ref{partition}. \\
\noalign{\smallskip}
\hline
\noalign{\smallskip}
$1$ & $1$ & $0$ & $0$ & $0$ & $1$ & $C_{n+1}=\frac{1}{n+2} {2n+2 \choose n+1}$ &
Follows from \cite[Section 5]{Williams}.\\
\noalign{\smallskip}
\hline
\noalign{\smallskip}
$1$ & $1$ & $1$ & $0$ & $0$ & $0$ & $F_{2n+1}$ (Fibonacci) & See Corollary \ref{genfun}. \\
\hline
\noalign{\smallskip}
$1$ & $1$ & $1$ & $0$ & $0$ & $1$ & Sloane A026671 & See Corollary \ref{genfun}. \\
\noalign{\smallskip}
\hline
\noalign{\smallskip}
$1$ & $1$ & $1$ & $1$ & $0$ & $0$ & $2 F_{2n}$ (Fibonacci) &  See Corollary \ref{Fibonacci}. \\
\hline
\end{tabular}
\bigskip
\caption{}
\label{partition-table}
\end{table}

The paper is organized as follows. 
In Section 2, we explain how the Askey-Wilson moments
are related to the generating polynomial
of staircase tableaux. In Section 3, we compute explicit formulas 
for the moments
 and for the generating polynomials of staircase tableaux. 
In Section 4, we study some specializations of the generating polynomials, 
namely
$q=0$, $q=1$ and $\delta=0$. In those cases we highlight the connection
to other combinatorial objects.
We conclude this paper with a list of open problems.

{\bf Acknowledgment.} The authors would like
to thank Philippe Nadeau and Eric Rains
for their constructive suggestions. 

\section{A combinatorial formula for Askey-Wilson moments}
\label{newformula}

The goal of this section is to prove Theorem \ref{moments2}.
Before we do so, we review the connection between 
orthogonal polynomials and tridiagonal matrices.
Recall that by Favard's Theorem, 
orthogonal polynomials satisfy a three-term recurrence.
\begin{theorem}
Let $\{P_k(x)\}_{k \geq 0}$ be a family of monic orthogonal polynomials.  Then
there exist coefficients $\{b_k\}_{k \geq 0}$ and $\{\lambda_k\}_{k \geq 1}$
such that
$P_{k+1}(x) = (x-b_k) P_k(x) - \lambda_k P_{k-1}(x)$.
\end{theorem}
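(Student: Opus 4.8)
The plan is to prove this forward direction of Favard's theorem by expanding $xP_k(x)$ in the basis of orthogonal polynomials and then invoking orthogonality to annihilate all but three of the resulting terms. First I would fix the bilinear form $\langle f, g\rangle = \mathcal{L}(fg)$ with respect to which the $P_k$ are orthogonal, where $\mathcal{L}$ is the underlying moment functional. The two facts I will lean on are that $\langle P_j, P_j\rangle \neq 0$ for every $j$ (so that orthogonal expansions are well defined — this is part of what it means for a monic orthogonal family to exist), and that multiplication by $x$ is self-adjoint for this form, since $\langle xf, g\rangle = \mathcal{L}(xfg) = \mathcal{L}(fxg) = \langle f, xg\rangle$ follows immediately from linearity of $\mathcal{L}$.

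Next, because each $P_k$ is monic of degree $k$, the set $\{P_0, \dots, P_k\}$ is a basis for polynomials of degree at most $k$, and the difference $xP_k(x) - P_{k+1}(x)$ (a difference of two monic polynomials of degree $k+1$) has degree at most $k$. Hence I can write
\[
xP_k(x) - P_{k+1}(x) = \sum_{j=0}^{k} c_{k,j}\, P_j(x).
\]
Pairing with $P_j$ and using $\langle P_{k+1}, P_j\rangle = 0$ for $j \leq k$ gives the coefficient formula $c_{k,j}\langle P_j, P_j\rangle = \langle xP_k, P_j\rangle$. The key step is then to apply self-adjointness: $\langle xP_k, P_j\rangle = \langle P_k, xP_j\rangle$, where $xP_j$ has degree $j+1$. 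For $j \leq k-2$ this degree is at most $k-1$, so $xP_j$ lies in the span of $P_0, \dots, P_{k-1}$, each of which is orthogonal to $P_k$; therefore $c_{k,j} = 0$. Only the $j=k$ and $j=k-1$ terms survive, and setting $b_k = c_{k,k}$ and $\lambda_k = c_{k,k-1}$ yields the asserted recurrence $P_{k+1}(x) = (x - b_k)P_k(x) - \lambda_k P_{k-1}(x)$.

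There is no deep obstacle here; the structural heart of the argument is precisely the self-adjointness of multiplication by $x$, which is what forces a \emph{three}-term (rather than longer) recurrence. The only point requiring care is the nondegeneracy $\langle P_j, P_j\rangle \neq 0$, which I would simply record as being implicit in the existence of the monic orthogonal family. As a clean byproduct worth noting, computing $\lambda_k$ explicitly gives $\lambda_k \langle P_{k-1}, P_{k-1}\rangle = \langle P_k, xP_{k-1}\rangle = \langle P_k, P_k\rangle$, since $xP_{k-1}$ is monic of degree $k$ and hence equals $P_k$ plus lower-degree terms orthogonal to $P_k$; thus $\lambda_k = \langle P_k, P_k\rangle / \langle P_{k-1}, P_{k-1}\rangle$, which in the positive-definite case is manifestly positive.
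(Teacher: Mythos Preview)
Your argument is correct and is the standard proof of this direction of Favard's theorem: expand $xP_k$ in the orthogonal basis, use self-adjointness of multiplication by $x$ to kill all coefficients below $j=k-1$, and read off $b_k$ and $\lambda_k$. The bonus computation $\lambda_k = \langle P_k,P_k\rangle/\langle P_{k-1},P_{k-1}\rangle$ is also fine.

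As for comparison with the paper: there is nothing to compare. The paper does not prove this statement at all --- it simply records it as the classical Favard theorem and moves on to use the three-term recurrence as input for the moment/tridiagonal-matrix machinery (Theorem~\ref{moment-motzkin} in the paper). So your proof supplies something the paper deliberately omits as background.
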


By work of \cite{Fla82,Vie88}, the $n$th moment of a family
of monic orthogonal polynomials can be computed using
a tridiagonal matrix, whose rows contain the information
of the three-term recurrence.
In what follows, $\bra{\widetilde{W}}=(1,0,0,\dots)$ and $\ket{\widetilde{V}}=\bra{\widetilde{W}}^T$.
Note that we use
the bra and ket notations to indicate row and column vectors, respectively. 

\begin{theorem}\cite{Fla82, Vie88}\label{moment-motzkin}
Consider a family of monic orthogonal polynomials $\{P_k(x)\}_{k \geq 0}$
which satisfy the three-term recurrence
$P_{k+1}(x) = (x-b_k) P_k(x) - \lambda_k P_{k-1}(x)$, for
$\{b_k\}_{k \geq 0}$ and $\{\lambda_k\}_{k \geq 1}$.
Then the $n$th moment $\mu_n$ of $\{P_k(x)\}_{k \geq 0}$ is equal to
$\bra{\widetilde{W}} M^n \ket{\widetilde{V}}$, where
$M=(m_{ij})_{i,j\geq 0}$ is the tridiagonal matrix with
rows and columns indexed by the non-negative integers, such that
$m_{i,i-1}=\lambda_i,$ $m_{ii}=b_i$, and $m_{i,i+1}=1$.
\end{theorem}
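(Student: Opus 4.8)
The plan is to recognize the tridiagonal matrix $M$ (up to transposition) as the matrix of the multiplication-by-$x$ operator expressed in the basis of monic orthogonal polynomials, so that iterating it $n$ times and reading off the top-left entry isolates the $n$th moment.

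First I would introduce the moment functional $\mathcal{L}$ associated to the family $\{P_k(x)\}$, normalized so that $\mathcal{L}[1]=1$. By definition $\mu_n = \mathcal{L}[x^n]$, and orthogonality together with monicity gives $\mathcal{L}[P_k] = \delta_{k,0}$: indeed $P_0 = 1$, and for $k \geq 1$ one has $\mathcal{L}[P_k] = \mathcal{L}[P_k P_0] = 0$. Consequently, if we expand $x^n = \sum_{k \geq 0} c_k^{(n)} P_k(x)$ in the orthogonal basis, then $\mu_n = c_0^{(n)}$; that is, the $n$th moment is simply the coefficient of $P_0$ in the expansion of $x^n$.

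Next I would rewrite the three-term recurrence in the form $x P_k(x) = P_{k+1}(x) + b_k P_k(x) + \lambda_k P_{k-1}(x)$ (with the convention $P_{-1} = 0$), which exhibits multiplication by $x$ as a linear operator on the span of $\{P_k\}$ whose matrix in this basis is tridiagonal, with entries prescribed exactly by the recurrence. Identifying the coefficient sequence of a polynomial with a column vector indexed by $k$, multiplication by $x$ then acts as $M$ (or its transpose, depending on one's row/column convention). Starting from $P_0 = 1$, whose coefficient vector is $\ket{\widetilde{V}}=(1,0,0,\dots)^T$, applying this operator $n$ times produces the coefficient vector of $x^n$, namely $M^n \ket{\widetilde{V}}$; pairing with $\bra{\widetilde{W}}=(1,0,0,\dots)$ extracts $c_0^{(n)} = \mu_n$. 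Since $\widetilde{W}$ and $\widetilde{V}$ are both the $0$th standard basis vector and $\bra{\widetilde{W}} M^n \ket{\widetilde{V}}$ is a scalar equal to its own transpose, the ambiguity between $M$ and $M^T$ is immaterial, and we recover $\mu_n = \bra{\widetilde{W}} M^n \ket{\widetilde{V}}$ exactly as stated.

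The steps above are entirely routine — this is the classical Flajolet--Viennot / Jacobi-matrix computation — so there is no deep obstacle here, only bookkeeping. The two points that require care are the boundary of the recurrence at $k=0$ (where the $\lambda_0 P_{-1}$ term must be suppressed, matching the absence of a subdiagonal entry in the first row of $M$) and the fact that $M$ is semi-infinite; the latter causes no trouble, because $\bra{\widetilde{W}} M^n \ket{\widetilde{V}}$ depends only on the finite top-left $(n+1)\times(n+1)$ block of $M$, so every sum involved is finite and no convergence question arises. One could instead give the purely combinatorial argument, expanding $\bra{\widetilde{W}} M^n \ket{\widetilde{V}}$ as a weighted sum over lattice paths from height $0$ back to height $0$ and identifying these with the weighted Motzkin paths of Viennot's theory, but the operator-theoretic route above is the most direct path to the displayed identity.
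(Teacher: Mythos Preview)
Your argument is correct and is precisely the standard Jacobi-matrix proof: identify $M^T$ as multiplication by $x$ in the basis $\{P_k\}$, and read off the $(0,0)$ entry of $M^n$ via $\mu_n=\mathcal L[x^n]=c_0^{(n)}$. You handle both delicate points (the $k=0$ boundary and the $M$ vs.\ $M^T$ ambiguity) cleanly.

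There is nothing to compare against, however: the paper does not supply its own proof of this theorem. It attributes the result to Flajolet and Viennot and simply directs the reader elsewhere (to \cite{CJW}) for a short proof. The approach sketched in those references is the combinatorial one you mention at the end --- expanding $(M^n)_{00}$ as a sum over weighted Motzkin paths and matching that against Viennot's moment formula --- whereas you have given the equivalent algebraic version. Either route is entirely adequate here.
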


See \cite{CJW} for a simple proof of Theorem \ref{moment-motzkin}.
We also note the following.

\begin{remark}\label{same-moments}
The polynomials defined by $Q_{k+1}(x)=(x-b_k) Q_k(x)-\lambda_k Q_{k-1}(x)$
have the same moments as the polynomials defined by 
$a_k Q'_{k+1}(x) = (x-b_k) Q'_k(x) - c_k Q'_{k-1}(x)$ as long as 
$a_{k-1} c_k = \lambda_k$.
\end{remark}

Now consider the following tridiagonal matrices,
which were introduced by Uchiyama, Sasamoto and Wadati in 
\cite{USW}.

\begin{eqnarray*}
\dd=\left[
\begin{array}{cccc}
d_0^\natural 	& d_0^\sharp 	& 0	 	& \cdots\\
d_0^\flat 	& d_1^\natural 	& d_1^\sharp 	& {}\\
0 		& d_1^\flat 	& d_2^\natural 	& \ddots\\
\vdots 		& {} 		& \ddots	& \ddots
\end{array}
\right] ,
&&\qquad 
\ee=\left[
\begin{array}{cccc}
e_0^\natural 	& e_0^\sharp 	& 0 		& \cdots\\
e_0^\flat 	& e_1^\natural 	& e_1^\sharp 	& {}\\
0 		& e_1^\flat 	& e_2^\natural 	& \ddots\\
\vdots 		& {}		& \ddots	& \ddots
\end{array}
\right], \text{ where }
\label{eqn:repde2}
\end{eqnarray*}
\begin{eqnarray*}
d_n^\natural:= d_n^\natural(a,b,c,d) &=&
\frac{q^{n-1}}{(1-q^{2n-2}abcd)(1-q^{2n}abcd)}\nonumber\\
&&\times[
bd(a+c)+(b+d)q-abcd(b+d)q^{n-1}-\{ bd(a+c)+abcd(b+d)\} q^n\nonumber\\
&&-bd(a+c)q^{n+1}+ab^2 cd^2(a+c) q^{2n-1}+abcd(b+d)q^{2n} ] ,\\
e_n^\natural:=e_n^\natural(a,b,c,d) &=&
\frac{q^{n-1}}{(1-q^{2n-2}abcd)(1-q^{2n}abcd)}\nonumber\\
&&\times[
ac(b+d)+(a+c)q-abcd(a+c)q^{n-1}-\{ ac(b+d)+abcd(a+c)\} q^n\nonumber\\
&&-ac(b+d)q^{n+1}+a^2 bc^2 d(b+d) q^{2n-1}+abcd(a+c)q^{2n} ] ,
\label{eqn:elements}
\end{eqnarray*}
\begin{eqnarray*}
&&d_n^\sharp := d_n^\sharp(a,b,c,d) =
\frac{1}{1-q^nac}\mathcal{A}_n ,\qquad
e_n^\sharp:=e_n^\sharp(a,b,c,d) =
-\frac{q^nac}{1-q^nac}\mathcal{A}_n ,\\
&&d_n^\flat:=d_n^\flat(a,b,c,d) =
-\frac{q^nbd}{1-q^nbd}\mathcal{A}_n ,\qquad
e_n^\flat:=e_n^\flat(a,b,c,d) =
\frac{1}{1-q^nbd}\mathcal{A}_n , \text{ and }
\end{eqnarray*}
\begin{align*}
\mathcal{A}_n:=&\mathcal{A}_n(a,b,c,d) \\
=&\left[
\frac{(1-q^{n-1}abcd)(1-q^{n+1})
(1-q^nab)(1-q^nac)(1-q^nad)(1-q^nbc)(1-q^nbd)(1-q^ncd)}
{(1-q^{2n-1}abcd)(1-q^{2n}abcd)^2(1-q^{2n+1}abcd)}
\right]^{1/2} .
\end{align*}

\begin{remark}\label{matrix-moments}
These matrices have the property that the coefficients in the 
$n$th row of $\dd + \ee$ are the coefficients in the three-term recurrence
for the Askey-Wilson polynomials \eqref{AWeq}.  
That three-term recurrence is given by 
\begin{equation}
A_nP_{n+1}(x)+B_nP_n(x)+C_nP_{n-1}(x)=2xP_n(x),
\label{AWeq}
\end{equation}
with $P_0(x)=1$ and $P_{-1}(x)=0$,
where
\begin{align*}
A_n&=
\frac{1-q^{n-1}abcd}{(1-q^{2n-1}abcd)(1-q^{2n}abcd)},
\\
B_n&=
\frac{q^{n-1}}{(1-q^{2n-2}abcd)(1-q^{2n}abcd)}
[(1+q^{2n-1}abcd)(qs+abcds')-q^{n-1}(1+q)abcd(s+qs')],
\\
C_n&=
\frac{(1-q^n)(1-q^{n-1}ab)(1-q^{n-1}ac)(1-q^{n-1}ad)(1-q^{n-1}bc)
(1-q^{n-1}bd)(1-q^{n-1}cd)}
{(1-q^{2n-2}abcd)(1-q^{2n-1}abcd)},
\end{align*}

\begin{eqnarray*}
~~s=a+b+c+d, \qquad s'=a^{-1}+b^{-1}+c^{-1}+d^{-1}.
\end{eqnarray*}

It's now a direct
consequence of Theorem \ref{moment-motzkin} and Remark \ref{same-moments}
that the $n$th moment 
of the Askey-Wilson
polynomials is given by 
$\bra{\widetilde{W}}  (\dd+\ee)^n  \ket{\widetilde{V}}$.
\end{remark}

Also define
\begin{align*}
D_n^\natural&=d_n^\natural(\frac{a}{\sqrt{y}}, b \sqrt{y}, \frac{c}{\sqrt{y}}, d \sqrt{y}),~~~~~
&E_n^\natural=e_n^\natural(\frac{a}{\sqrt{y}}, b \sqrt{y}, \frac{c}{\sqrt{y}}, d \sqrt{y}),\\
D_n^\sharp&=d_n^\sharp(\frac{a}{\sqrt{y}}, b \sqrt{y}, \frac{c}{\sqrt{y}}, d \sqrt{y}),~~~~~
&E_n^\sharp=e_n^\sharp(\frac{a}{\sqrt{y}}, b \sqrt{y}, \frac{c}{\sqrt{y}}, d \sqrt{y}),\\
D_n^\flat&=d_n^\flat(\frac{a}{\sqrt{y}}, b \sqrt{y}, \frac{c}{\sqrt{y}}, d \sqrt{y}),~~~~~
&E_n^\flat=e_n^\flat(\frac{a}{\sqrt{y}}, b \sqrt{y}, \frac{c}{\sqrt{y}}, d \sqrt{y}).
\end{align*}

\begin{lemma}\label{lem1}
$y d_n^\natural + e_n^\natural = \sqrt{y} (D_n^{\natural}+E_n^{\natural}).$
\end{lemma}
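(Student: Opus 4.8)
The plan is to exploit two structural features of the entries $d_n^\natural$ and $e_n^\natural$. First, I would observe that the substitution $(a,b,c,d)\mapsto(a/\sqrt{y},\,b\sqrt{y},\,c/\sqrt{y},\,d\sqrt{y})$ fixes the product $abcd$, since the two factors of $\sqrt{y}$ cancel the two factors of $1/\sqrt{y}$. Consequently the common rational prefactor $q^{n-1}/[(1-q^{2n-2}abcd)(1-q^{2n}abcd)]$ appearing in both $d_n^\natural$ and $e_n^\natural$ is left unchanged by the substitution, so it suffices to track how the two bracketed polynomials transform.

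To do this I would introduce a grading in the powers of $y$: assign the weight $-\tfrac12$ to each of $a,c$ and the weight $+\tfrac12$ to each of $b,d$, and extend multiplicatively (so the weight lives in $\tfrac12\Z$). Under this convention the substitution sends a monomial of total weight $w$ to $y^{w}$ times itself; equivalently, the weight of a monomial is $\tfrac12$ times the excess of its $\{b,d\}$-factors over its $\{a,c\}$-factors.

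The key claim, verified monomial by monomial, is that every term in the bracket defining $d_n^\natural$ has weight exactly $+\tfrac12$, while every term in the bracket of $e_n^\natural$ has weight exactly $-\tfrac12$. For instance $bd(a+c)$ contributes two $\{b,d\}$-factors against one $\{a,c\}$-factor (weight $+\tfrac12$), and the top-degree term $ab^2cd^2(a+c)$ has four $\{b,d\}$-factors against three $\{a,c\}$-factors (again weight $+\tfrac12$); the remaining terms are checked identically. The corresponding statement for $e_n^\natural$ is then immediate from the symmetry $e_n^\natural(a,b,c,d)=d_n^\natural(b,a,d,c)$, which interchanges the roles of $\{a,c\}$ and $\{b,d\}$ and hence negates every weight. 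It follows that $D_n^\natural=\sqrt{y}\,d_n^\natural$ and $E_n^\natural=y^{-1/2}\,e_n^\natural$, after which
$$\sqrt{y}\,(D_n^\natural+E_n^\natural)=\sqrt{y}\bigl(\sqrt{y}\,d_n^\natural+y^{-1/2}\,e_n^\natural\bigr)=y\,d_n^\natural+e_n^\natural,$$
which is the desired identity.

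The only real work is the term-by-term weight count, and there is no genuine obstacle here, just the bookkeeping of confirming that the grading is uniform across all seven monomials in each bracket. The terms most prone to a miscount are the quadratic-in-$\{b,d\}$ term $ab^2cd^2(a+c)$ (and its $e$-counterpart $a^2bc^2d(b+d)$), so I would double-check those weights explicitly; the symmetry $e_n^\natural(a,b,c,d)=d_n^\natural(b,a,d,c)$ halves the amount of checking needed and also serves as a consistency test on the computation.
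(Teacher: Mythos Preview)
Your proposal is correct and follows essentially the same approach as the paper: both proofs reduce the lemma to the identities $D_n^\natural=\sqrt{y}\,d_n^\natural$ and $E_n^\natural=y^{-1/2}\,e_n^\natural$, from which the statement is immediate. The paper simply asserts these two identities, whereas you supply the underlying justification via the homogeneity (weight) argument, which is the natural way to verify them.
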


\begin{proof}
This follows from the fact that 
$D_n^\natural = \sqrt{y} d_n^{\natural}$ and 
$E_n^\natural = \frac{1}{\sqrt{y}} e_n^{\natural}$.
\end{proof}

\begin{lemma}\label{lem2}
$(yd_n^\sharp+e_n^\sharp)(yd_n^\flat + e_n^\flat) =y(D_n^\sharp + E_n^\sharp)(D_n^\flat+E_n^\flat)$.
\end{lemma}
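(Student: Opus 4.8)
The plan is to exploit the fact that each of the four quantities $d_n^\sharp, e_n^\sharp, d_n^\flat, e_n^\flat$ is the common factor $\mathcal{A}_n$ times a simple rational prefactor, and that the substitution $(a,b,c,d)\mapsto(a/\sqrt{y},\,b\sqrt{y},\,c/\sqrt{y},\,d\sqrt{y})$ defining the capital quantities acts on these objects in a very controlled way. First I would record how this substitution acts on the relevant products of parameters. The crucial observation is that $ab,\ ad,\ bc,\ cd$ and the product $abcd$ are all \emph{invariant} under the substitution, whereas $ac\mapsto ac/y$ and $bd\mapsto bd\,y$. Since $\mathcal{A}_n$ depends on the parameters only through such products, only its factors $(1-q^nac)$ and $(1-q^nbd)$ are altered, giving
$$
\mathcal{A}_n' := \mathcal{A}_n\!\left(\tfrac{a}{\sqrt{y}}, b\sqrt{y}, \tfrac{c}{\sqrt{y}}, d\sqrt{y}\right)
= \mathcal{A}_n\left[\frac{(1-q^{n}ac/y)(1-q^{n}bd\,y)}{(1-q^{n}ac)(1-q^{n}bd)}\right]^{1/2}.
$$

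Next I would simplify the right-hand side. Evaluating the definitions of $d_n^\sharp,e_n^\sharp$ at the substituted parameters (where the argument $ac$ becomes $ac/y$) yields
$$
D_n^\sharp + E_n^\sharp = \frac{1-q^{n}ac/y}{1-q^{n}ac/y}\,\mathcal{A}_n' = \mathcal{A}_n',
$$
and, since the argument $bd$ becomes $bd\,y$, likewise $D_n^\flat + E_n^\flat = \mathcal{A}_n'$. Hence the right-hand side of the lemma collapses to the clean expression $y(\mathcal{A}_n')^2$.

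Then I would compute the left-hand side directly. From the definitions,
$$
y\,d_n^\sharp + e_n^\sharp = \frac{y-q^{n}ac}{1-q^{n}ac}\,\mathcal{A}_n,
\qquad
y\,d_n^\flat + e_n^\flat = \frac{1-y\,q^{n}bd}{1-q^{n}bd}\,\mathcal{A}_n,
$$
so their product is $\frac{(y-q^{n}ac)(1-y\,q^{n}bd)}{(1-q^{n}ac)(1-q^{n}bd)}\,\mathcal{A}_n^2$. Comparing this with $y(\mathcal{A}_n')^2$ obtained above, and using the identity $y(1-q^{n}ac/y)=y-q^{n}ac$ to absorb the factor of $y$, the two expressions coincide, which proves the lemma.

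The only delicate step — and the one I would carry out most carefully — is tracking the transformation of $\mathcal{A}_n$: one must verify that every factor except $(1-q^nac)$ and $(1-q^nbd)$ is genuinely invariant under the substitution, and keep the square root honest so that the $y$-dependent correction factor is exactly right. Once that is pinned down, the remainder is elementary algebraic cancellation.
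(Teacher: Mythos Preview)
Your proof is correct and follows essentially the same approach as the paper: both compute the transformed $\mathcal{A}_n$, evaluate the capital sums, reduce the right-hand side to an expression in $\mathcal{A}_n^2$, and compare with the direct computation of the left-hand side. Your presentation is in fact slightly cleaner, since you observe immediately that $D_n^\sharp+E_n^\sharp$ and $D_n^\flat+E_n^\flat$ each collapse to $\mathcal{A}_n'$, whereas the paper carries the prefactor $\frac{y-q^nac}{1-q^nac\,y^{-1}}$ (which is just $y$) through one extra step before the same cancellation occurs.
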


\begin{proof}
First observe that 
\begin{align*}
y(D_n^\sharp + E_n^\sharp) &= 
\frac{y-q^n ac}{1-q^n a c y^{-1}} \mathcal{A}_n(\frac{a}{\sqrt{y}}, b \sqrt{y}, \frac{c}{\sqrt{y}}, d \sqrt{y}),\\
D_n^\flat + E_n^\flat &= \frac{1-q^n b d y}{1-q^n bdy} \mathcal{A}_n(\frac{a}{\sqrt{y}}, b \sqrt{y}, \frac{c}{\sqrt{y}}, d \sqrt{y}), \text{ and}\\
\mathcal{A}_n(\frac{a}{\sqrt{y}}, b \sqrt{y}, \frac{c}{\sqrt{y}}, d \sqrt{y}) &=
\frac{(1-q^n a c y^{-1})(1-q^n b d y)}{(1-q^n a c)(1-q^n b d)}^{1/2} \mathcal{A}_n(a,b,c,d).
\end{align*}

Multiplying the first two equations gives 
$$y(D_n^\sharp + E_n^\sharp) (D_n^\flat + E_n^\flat) = 
\frac{(y-q^n ac)(1-q^n bdy)}{(1-q^n acy^{-1})(1-q^n bdy)} (\mathcal{A}_n(\frac{a}{\sqrt{y}}, b \sqrt{y}, \frac{c}{\sqrt{y}}, d \sqrt{y}))^2.$$
Then using the third equation we have 
$$y(D_n^\sharp + E_n^\sharp) (D_n^\flat + E_n^\flat) = 
\frac{(y-q^n ac)(1-q^n bdy)}{(1-q^n ac)(1-q^n bd)} \mathcal{A}_n^2.$$
It remains to see that 
$$(yd_n^\sharp+e_n^\sharp)(yd_n^\flat + e_n^\flat)  = \frac{(y-q^n ac)(1-q^n bdy)}{(1-q^n ac)(1-q^n bd)} \mathcal{A}_n^2,$$ but this follows from the definition of 
$d_n^\sharp, e_n^\sharp, d_n^\flat, e_n^\flat$.

\end{proof}

We now define matrices 
$\widetilde{D}$ and $\widetilde{E}$ by 
\begin{equation}
\widetilde{D} =\frac{1}{1-q}(\1+\dd),\qquad \widetilde{E} =\frac{1}{1-q}(\1+\ee).
\label{eqn:defde}
\end{equation}

Then 
$y\widetilde{D} + \widetilde{E} = \frac{1}{1-q} (\1 + y\1 + y \dd + \ee).$

\begin{proposition} \label{thm1} The $n$th moment of the specialization of the Askey-Wilson polynomials
$P_m(x+\frac{\frac{1}{\sqrt{y}} + \sqrt{y}}{2}; \frac{a}{\sqrt{y}}, b \sqrt{y}, \frac{c}{\sqrt{y}}, d\sqrt{y} \vert q)$ is equal to 
$$\frac{\llangle \widetilde{W} \vert (y \widetilde{D}+\widetilde{E})^n \vert \widetilde{V} \rrangle (1-q)^n}{2^n \sqrt{y}^n}.$$
\end{proposition}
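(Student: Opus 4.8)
The plan is to compute the left-hand moment through the three-term recurrence of the specialized polynomials and then identify the resulting tridiagonal matrix with $\tfrac{1-q}{2\sqrt{y}}(y\widetilde{D}+\widetilde{E})$. By Favard's theorem the polynomials $P_m\bigl(x+\tfrac12(\sqrt{y}+1/\sqrt{y});\tfrac{a}{\sqrt{y}},b\sqrt{y},\tfrac{c}{\sqrt{y}},d\sqrt{y}\mid q\bigr)$ satisfy a three-term recurrence, so by Theorem \ref{moment-motzkin} together with Remark \ref{same-moments} their $n$th moment equals $\bra{\widetilde{W}} N^n \ket{\widetilde{V}}$ for the tridiagonal matrix $N$ encoding that recurrence. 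Since $\bra{\widetilde{W}} M^n \ket{\widetilde{V}}$ for a tridiagonal $M$ depends only on the diagonal entries $M_{nn}$ and the products $M_{n,n+1}M_{n+1,n}$ of paired off-diagonal entries (the weighted-Motzkin-path reading behind Theorem \ref{moment-motzkin}), the entire argument reduces to checking that $N$ and $\tfrac{1-q}{2\sqrt{y}}(y\widetilde{D}+\widetilde{E})$ agree in these two pieces of data.

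First I would write down $N$ from the Askey--Wilson recurrence \eqref{AWeq}. Substituting the parameters $(a,b,c,d)\mapsto(\tfrac{a}{\sqrt{y}},b\sqrt{y},\tfrac{c}{\sqrt{y}},d\sqrt{y})$ replaces $\dd,\ee$ by the capital matrices $D,E$, so that by Remark \ref{matrix-moments} the unshifted reparametrized moments are governed by $D+E$; the substitution $x\mapsto x+\tfrac12(\sqrt{y}+1/\sqrt{y})$ then only translates the diagonal of this matrix, leaving the off-diagonal products untouched. The upshot is that $N$ has diagonal built from $D_n^\natural+E_n^\natural$ shifted by the constant $\tfrac12(\sqrt{y}+1/\sqrt{y})$, and off-diagonal products built from $(D_n^\sharp+E_n^\sharp)(D_n^\flat+E_n^\flat)$, up to the single factor of two separating the normalization $2xP_n$ in \eqref{AWeq} from multiplication by $x$.

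Next I would expand the target matrix and match. Using $y\widetilde{D}+\widetilde{E}=\tfrac{1}{1-q}((1+y)\1+y\dd+\ee)$ gives $\tfrac{1-q}{2\sqrt{y}}(y\widetilde{D}+\widetilde{E})=\tfrac{1}{2\sqrt{y}}((1+y)\1+y\dd+\ee)$. Its diagonal entry is $\tfrac{1}{2\sqrt{y}}\bigl((1+y)+(y d_n^\natural+e_n^\natural)\bigr)$; Lemma \ref{lem1} turns $y d_n^\natural+e_n^\natural$ into $\sqrt{y}(D_n^\natural+E_n^\natural)$, and since $\tfrac{1+y}{2\sqrt{y}}=\tfrac12(\sqrt{y}+1/\sqrt{y})$ this is exactly the shift constant plus $\tfrac12(D_n^\natural+E_n^\natural)$. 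Its paired off-diagonal product is $\tfrac{1}{4y}(y d_n^\sharp+e_n^\sharp)(y d_n^\flat+e_n^\flat)$, which Lemma \ref{lem2} converts to $\tfrac14(D_n^\sharp+E_n^\sharp)(D_n^\flat+E_n^\flat)$. These coincide with the diagonal and the off-diagonal products of $N$ once the $2x$-versus-$x$ factor of two is accounted for, and the $2^n$, $(1-q)^n$, and $\sqrt{y}^n$ in the statement fall out of the scalar identity $\tfrac{1-q}{2\sqrt{y}}(y\widetilde{D}+\widetilde{E})=\tfrac{1}{2\sqrt{y}}((1+y)\1+y\dd+\ee)$.

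I expect the main obstacle to be precisely this constant bookkeeping rather than any conceptual point: one must simultaneously keep track of the factor of two from the $2xP_n$ normalization in \eqref{AWeq} (which produces the $2^n$), the $\tfrac{1}{1-q}$ built into the definitions of $\widetilde{D},\widetilde{E}$ (which produces $(1-q)^n$), and the various $\sqrt{y}$ scalings. In particular I would have to fix the branch of $\sqrt{y}$ so that the diagonal shift $\tfrac{1+y}{2\sqrt{y}}$ reproduces the argument shift $\tfrac12(\sqrt{y}+1/\sqrt{y})$ with the correct sign, the point being neatly consistent since at $y=-1$ the shift degenerates to $0$. All the genuinely nontrivial content is carried by Lemmas \ref{lem1} and \ref{lem2} and by the moment-path principle that reduces matching two tridiagonal matrices to matching diagonals and paired off-diagonal products.
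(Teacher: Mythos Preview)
Your proposal is correct and follows essentially the same route as the paper's own proof: both identify the tridiagonal matrix governing the shifted, reparametrized Askey--Wilson moments and match it against $\tfrac{1-q}{2\sqrt{y}}(y\widetilde{D}+\widetilde{E})$ via Lemmas \ref{lem1} and \ref{lem2}, invoking Remark \ref{same-moments} so that only diagonals and paired off-diagonal products need to agree. The only cosmetic difference is that the paper introduces intermediate polynomials $q_m$ and $Q_m=q_m(2x)$ to track the $2x$-versus-$x$ normalization, whereas you fold that directly into the scalar $\tfrac{1-q}{2\sqrt{y}}$; one small caution is that your phrase ``the capital matrices $D,E$'' is ambiguous in this paper's notation, since $D,E$ denote the Corteel--Williams matrices from \cite{CW4} rather than the tridiagonal matrices with entries $D_n^\natural,E_n^\natural,\dots$.
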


\begin{proof}
Note that $$\llangle \widetilde{W} \vert (y \widetilde{D}+\widetilde{E})^n \vert \widetilde{V} \rrangle
= \frac{1}{(1-q)^n} \llangle \widetilde{W} \vert (\1 + y \1 + y\dd + \ee)^n \vert \widetilde{V} \rrangle,$$ and so 
\begin{equation}
\label{sqrt-mess}
\llangle \widetilde{W} \vert (y \widetilde{D}+\widetilde{E})^n \vert \widetilde{V} \rrangle (1-q)^n \sqrt{y}^{-n}
= \llangle \widetilde{W} \vert (\frac{1}{\sqrt{y}}\1 + \sqrt{y} \1 + \sqrt{y}\dd + \frac{1}{\sqrt{y}}\ee)^n \vert \widetilde{V} \rrangle.
\end{equation}

It's easy to see that the right-hand-side of equation (\ref{sqrt-mess}) is the 
$n$th moment for the monic polynomials
$q_m(x+\frac{1}{\sqrt{y}} + \sqrt{y})$, where the $q_m$'s are defined by 
the three-term recurrence 
$x q_n(x) =  q_{n+1}(x) + B'_n q_n(x) + A'_{n-1} C'_n q_{n-1}(x)$, 
and where $A'_n, B'_n, C'_n$ are given by the 
$n$th row of the tridiagonal
matrix 
$\sqrt{y} \dd + \frac{1}{\sqrt{y}} \ee$.

Alternatively, letting $Q_n(x)=q_n(2x)$, 
we can interpret the right-hand-side of (\ref{sqrt-mess}) as 
$2^n$ times the $n$th moment for the non-monic polynomials
$Q_m(x + \frac{\frac{1}{\sqrt{y}} + \sqrt{y}}{2})$, which are defined by the recurrence
$$2x Q_n(x) = Q_{n+1}(x) + B'_n Q_n(x) + A'_{n-1} C'_n Q_{n-1}(x).$$ 

By Lemmas \ref{lem1} and \ref{lem2}, 
\begin{align*}
B'_n &= \frac{y d_n^\natural + e_n^\natural}{\sqrt{y}} = D_n^\natural + E_n^\natural,\text{ and }\\
A'_{n-1} C'_n &= \frac{y d_n^\sharp + e_n^\sharp}{\sqrt{y}} \frac{yd_n^\flat + e_n^\flat}{\sqrt{y}} = (D_n^\sharp + E_n^\sharp)(D_n^\flat + E_n^\flat).
\end{align*}

Note also that by Remark \ref{same-moments}, the polynomials defined by 
$2x Q_n(x) = Q_{n+1}(x) + B'_n Q_n(x) + A'_{n-1} C'_n Q_{n-1}(x)$  and the polynomials 
defined by 
$2x Q'_n(x) = A'_n Q'_{n+1}(x) + B'_n Q'_n(x) + C'_n Q'_{n-1}(x)$ have the same moments. 

Therefore by Remark \ref{matrix-moments},
the $n$th moment of the polynomials $Q_m(x)$ is the $n$th moment of the 
Askey-Wilson polynomials $P_m(x)$, with the specialization 
$a \to \frac{a}{\sqrt{y}},$ 
$b \to b \sqrt{y},$
$c \to \frac{c}{\sqrt{y}},$ and 
$d \to d \sqrt{y}$.  The proposition follows.
\end{proof}

Now we would like to relate this to the matrices $D, E, \vert V\rrangle, \llangle {W}\vert$ given in 
\cite[Definition 6.1]{CW4}, 
which have a combinatorial interpretation in terms of 
staircase tableaux. We do not need their definitions here, but only the following property.
\begin{theorem}\cite[Corteel, Williams]{CW3, CW4}\label{th:refine}
We have that $$
Z_n(y;\alpha,\beta,\gamma,\delta;q)=\llangle {W}\vert (yD+E)^n\vert V\rrangle.
$$
Additionally, the coefficient of $y^i$ above is proportional
to the probability that in the asymmetric exclusion process on $n$ sites,
exactly $i$ sites are occupied by a particle.
\end{theorem}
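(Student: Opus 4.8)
The plan is to reduce the whole statement to the matrix-ansatz interpretation of staircase tableaux refined by type, which is the substantive content of \cite{CW3, CW4}, and then carry out elementary bookkeeping with the fugacity variable $y$. First I would recall the key property of the matrices $D, E$ and the boundary vectors $\llangle W\vert$, $\vert V\rrangle$ from \cite[Definition 6.1]{CW4}: they are constructed so that, for any fixed type $\sigma=\sigma_1\cdots\sigma_n\in\{\circ,\bullet\}^n$,
$$
Z_\sigma(\alpha,\beta,\gamma,\delta;q)=\llangle W\vert X_{\sigma_1}\cdots X_{\sigma_n}\vert V\rrangle,\qquad X_\bullet=D,\ X_\circ=E.
$$
That is, placing a $D$ at each occupied site and an $E$ at each empty site, the matrix product computes the type generating polynomial. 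This identity is the engine behind Theorem \ref{NewThm}, and it is the one nontrivial fact I would take as input.

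Next I would expand $(yD+E)^n$ as a sum over the $2^n$ words $w\in\{D,E\}^n$. A word with a factor $D$ in position $i$ corresponds to $\sigma_i=\bullet$, and each such factor carries one power of $y$; hence the word indexed by $\sigma$ contributes $y^{t(\sigma)}$, where $t(\sigma)$ counts the $\bullet$'s. Collecting terms and applying the input identity,
$$
\llangle W\vert (yD+E)^n\vert V\rrangle
=\sum_{\sigma\in\{\circ,\bullet\}^n} y^{t(\sigma)}\,\llangle W\vert X_{\sigma_1}\cdots X_{\sigma_n}\vert V\rrangle
=\sum_{\sigma} y^{t(\sigma)}\,Z_\sigma .
$$
Finally, regrouping the definition of $Z_\sigma$ over all staircase tableaux of each type,
$$
\sum_{\sigma} y^{t(\sigma)} Z_\sigma
=\sum_{\sigma} y^{t(\sigma)}\!\!\sum_{\type(\T)=\sigma}\!\!\wt(\T)
=\sum_{\T}\wt(\T)\,y^{t(\T)}
=Z_n(y;\alpha,\beta,\gamma,\delta;q),
$$
which is the first assertion.

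For the probabilistic statement I would invoke Theorem \ref{NewThm}: the steady-state probability of configuration $\sigma$ equals $Z_\sigma/Z_n$. The event that exactly $i$ sites are occupied is the disjoint union of the states $\sigma$ with $t(\sigma)=i$, so its probability is $\big(\sum_{t(\sigma)=i} Z_\sigma\big)/Z_n = \big([y^i]\,Z_n(y)\big)/Z_n$. Hence $[y^i]\,Z_n(y) = Z_n\cdot\Pr[i\text{ occupied}]$, so the coefficient of $y^i$ is proportional to the occupation probability, with proportionality constant $Z_n$ independent of $i$, as claimed.

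The only genuinely hard step is the input identity $Z_\sigma=\llangle W\vert X_{\sigma_1}\cdots X_{\sigma_n}\vert V\rrangle$: exhibiting matrices $D, E$ and boundary vectors, checking that they satisfy the DEHP-type relations that solve the ASEP, and matching the matrix product with the staircase-tableaux generating function type by type. This is precisely the work already carried out in \cite{CW3, CW4}, so here I would cite it rather than reprove it; everything else is the routine $y$-bookkeeping above.
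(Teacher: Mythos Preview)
Your argument is correct. The paper does not supply its own proof of this theorem at all: it is quoted as a result of \cite{CW3, CW4} and used as a black box. What you have written is exactly the standard derivation one would give starting from the finer type-by-type identity $Z_\sigma=\llangle W\vert X_{\sigma_1}\cdots X_{\sigma_n}\vert V\rrangle$ of those references, together with Theorem~\ref{NewThm}; the remaining steps are, as you say, routine $y$-bookkeeping. There is nothing to compare and no gap to flag.
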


The following result gives an explicit relation between the moments
of the Askey-Wilson polynomials and the fugacity partition function
of the ASEP.

\begin{corollary}\label{spec:cor}
The $n$th moment of the specialization of the Askey-Wilson polynomials
$P_m(2\sqrt{y} x+1+y; \frac{a}{\sqrt{y}}, b \sqrt{y}, \frac{c}{\sqrt{y}}, d\sqrt{y} \vert q)$ is equal to 
$$\frac{(1-q)^n} 
{\prod_{j=0}^{n-1}(\alpha \beta - \gamma \delta q^j)}  \llangle {W} \vert (y {D}+{E})^n \vert {V} \rrangle
=\frac{(1-q)^n} 
{\prod_{j=0}^{n-1}(\alpha \beta - \gamma \delta q^j)}
Z_n(y;\alpha,\beta,\gamma,\delta;q),$$
 where $\alpha, \beta, \gamma, \delta$ are given by 
(\ref{subs1}).
\label{pfunc1}
\end{corollary}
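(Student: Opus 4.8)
The plan is to carry out the computation of Proposition \ref{thm1} using the affine change of variable $x\mapsto 2\sqrt{y}\,x+1+y$ in place of $x\mapsto x+\tfrac{1+y}{2\sqrt{y}}$, and then to trade the analytic matrices $\widetilde{D},\widetilde{E}$ for the combinatorial matrices $D,E$ of \cite{CW4}, for which Theorem \ref{th:refine} supplies the one property we need. Two points require care: how the moment functional behaves under this affine substitution, and how the two matrix models are matched. The constant $\prod_{j=0}^{n-1}(\alpha\beta-\gamma\delta q^{j})$ will be produced entirely by the second point.

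First I would work inside the moment–matrix formalism. By Theorem \ref{moment-motzkin} and Remark \ref{same-moments}, the $n$th moment of a family of orthogonal polynomials depends only on the diagonal entries of its tridiagonal matrix and on the products of symmetric off–diagonal entries, and an affine substitution $u\mapsto\lambda x+s$ sends that matrix $M$ to a representative of $\tfrac{1}{\lambda}(M-sI)$ in this equivalence class. Applying this with $\lambda=2\sqrt{y}$ and $s=1+y$ to the specialized Askey–Wilson recurrence — whose diagonal entries are $D_{n}^{\natural}+E_{n}^{\natural}$ and whose off–diagonal products are $(D_{n}^{\sharp}+E_{n}^{\sharp})(D_{n}^{\flat}+E_{n}^{\flat})$, these being exactly the corresponding data of $\sqrt{y}\,\dd+\tfrac{1}{\sqrt{y}}\ee$ by Lemmas \ref{lem1} and \ref{lem2} — a direct check, matching diagonals and off–diagonal products, shows that the $n$th moment of $P_{m}(2\sqrt{y}\,x+1+y;\tfrac{a}{\sqrt{y}},b\sqrt{y},\tfrac{c}{\sqrt{y}},d\sqrt{y}\vert q)$ equals
\[
\llangle \widetilde{W}\vert\Bigl(\tfrac{1}{2}\dd+\tfrac{1}{2y}\ee-\tfrac{1+y}{2\sqrt{y}}\,\1\Bigr)^{\!n}\vert\widetilde{V}\rrangle .
\]
By the definition \eqref{eqn:defde} of $\widetilde{D},\widetilde{E}$, the matrix in parentheses equals $\tfrac{1-q}{2y}(y\widetilde{D}+\widetilde{E})$ up to a scalar multiple of $\1$, so the moment has been recast as the analytic matrix–ansatz object attached to the parameters \eqref{subs1}.

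It then remains to replace $(\widetilde{D},\widetilde{E},\langle\widetilde{W}\vert,\vert\widetilde{V}\rangle)$ by $(D,E,\langle W\vert,\vert V\rangle)$. Both are representations of the matrix–ansatz algebra of the ASEP with the boundary parameters \eqref{subs1}: one checks $\dd\ee-q\ee\dd=(1-q)\1$, which by \eqref{eqn:defde} is the bulk relation $\widetilde{D}\widetilde{E}-q\widetilde{E}\widetilde{D}=\widetilde{D}+\widetilde{E}$, and the first row and column of $\dd,\ee$ give the two boundary relations with exactly the parameters \eqref{subs1}. Invoking the comparison of these two representations from \cite{CW4}, and then Theorem \ref{th:refine} in the form $\llangle W\vert(yD+E)^{n}\vert V\rrangle=Z_{n}(y;\alpha,\beta,\gamma,\delta;q)$, assembles the moment into the asserted expression $\tfrac{(1-q)^{n}}{\prod_{j=0}^{n-1}(\alpha\beta-\gamma\delta q^{j})}\,Z_{n}(y;\alpha,\beta,\gamma,\delta;q)$.

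The real obstacle is this last matching, because the normalizing product $\prod_{j=0}^{n-1}(\alpha\beta-\gamma\delta q^{j})$ is not an $n$th power and so cannot arise from a global rescaling of the matrices, nor from a diagonal conjugation, which leaves every moment unchanged. Its $n$–dependence reflects instead that the combinatorial boundary data of \cite{CW4} are normalized so that $\langle W\vert(yD+E)^{n}\vert V\rangle$ is the honest polynomial $Z_{n}$, whereas the analytic entries of $\dd,\ee$ carry the $(\alpha\beta-\gamma\delta q^{j})$ denominators; equivalently, the particular shift $1+y$ and scale $2\sqrt{y}$ are chosen precisely to invert the binomial transform in Theorem \ref{moments}, collapsing its alternating sum to the single top term. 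Making this proportionality precise, with the substitution \eqref{subs1} entering in an essential way, is where the genuine work lies.
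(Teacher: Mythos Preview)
Your overall strategy mirrors the paper's: redo Proposition~\ref{thm1} in the rescaled variable and then pass from $(\widetilde D,\widetilde E,\widetilde W,\widetilde V)$ to $(D,E,W,V)$ via \cite{CW4}. But the execution has a genuine error. The phrase ``the $n$th moment of $P_m(2\sqrt{y}\,x+1+y;\ldots)$'' here means the specialized Askey--Wilson moment functional applied to $(2\sqrt{y}\,x+1+y)^n$, not the $n$th moment of the shifted polynomial family $\{P_m(2\sqrt{y}\,x+1+y)\}_m$. Under the correct reading the matrix transforms as $M\mapsto 2\sqrt{y}\,M+(1+y)\1$, not $M\mapsto (M-(1+y)\1)/(2\sqrt{y})$. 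Once you also keep track of the factor~$2$ coming from the $2x$ on the left of \eqref{AWeq}, the matrix you get is \emph{exactly} $(1+y)\1+y\dd+\ee=(1-q)(y\widetilde D+\widetilde E)$, with no leftover multiple of~$\1$. Your displayed matrix $\tfrac12\dd+\tfrac{1}{2y}\ee-\tfrac{1+y}{2\sqrt{y}}\1$ and the subsequent ``up to a scalar multiple of $\1$'' are artifacts of using the wrong transformation; a nonzero shift by $c\1$ genuinely changes the moments (it introduces a binomial transform), so brushing it aside is not legitimate---and indeed under your reading the Corollary's stated equality would be off by a factor of $(4y)^n$.

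On the second step you are right that this is where the content lies, but you do not actually carry it out: you invoke ``the comparison of these two representations from \cite{CW4}'' and then concede in the last paragraph that making the proportionality precise ``is where the genuine work lies.'' The paper resolves this in one line by citing \cite[Lemma~7.1]{CW4} and the proof of \cite[Theorem~4.1]{CW4}, which give directly
\[
\llangle\widetilde W\vert (y\widetilde D+\widetilde E)^n\vert\widetilde V\rrangle
=\frac{\llangle W\vert (yD+E)^n\vert V\rrangle}{\prod_{j=0}^{n-1}(\alpha\beta-\gamma\delta q^{\,j})}.
\]
Once you fix the affine-transformation direction and cite these two results, your argument becomes the paper's proof.
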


\begin{proof}
The matrices $\widetilde{D}, \widetilde{E}, \widetilde{V}, \widetilde{W}$ satisfy 
the Matrix Ansatz of Derrida-Evans-Hakim-Pasquier, 
see \cite[Theorem 5.1]{CW4}.  
On the other hand, the matrices $D, E, V, W$ satisfy the modified Matrix Ansatz
of \cite[Theorem 5.2]{CW4}.  By
\cite[Lemma 7.1]{CW4} and the proof of \cite[Theorem 4.1]{CW4}, these can be related via
$$\llangle \widetilde{W} \vert (y\widetilde{D}+\widetilde{E})^n \vert \widetilde{V} \rrangle 
=  \frac{\llangle W \vert (yD+E)^n \vert V \rrangle}{\prod_{j=0}^{n-1} (\alpha \beta - \gamma \delta q^j)}.$$
Now the proof follows from  Proposition
\ref{thm1},\footnote{Note
that Uchiyama-Sasamoto-Wadati \cite{USW} defined vectors 
$\widetilde{W}$ and $\widetilde{V}$ to be 
$h_0^{1/2}(1,0,0,\cdots )$ and 
$h_0^{1/2}(1,0,0,\cdots )^T$, not  
$(1,0,0,\cdots )$ and 
$(1,0,0,\cdots )^T$ as we have done here.  However, their weight function $w$
did not have the factor of $h_0$ as ours has, and these two discrepancies
``cancel each other out."} together with the observation
that $2^n \sqrt{y}^n$ times the $n$th moment of 
the polynomials 
$P_m(x+\frac{\frac{1}{\sqrt{y}} + \sqrt{y}}{2}; \frac{a}{\sqrt{y}}, b \sqrt{y}, \frac{c}{\sqrt{y}}, d\sqrt{y} \vert q)$ is equal to 
the $n$th moment of the polynomials 
$P_m(2\sqrt{y} x+1+y; \frac{a}{\sqrt{y}}, b \sqrt{y}, \frac{c}{\sqrt{y}}, d\sqrt{y} \vert q)$. 
\end{proof}

Corollary \ref{pfunc1} is equivalent to the following one:
\begin{corollary}
The fugacity partition function $Z_n(y;\alpha,\beta,\gamma,\delta;q)$ is equal to $$
(abcd)_n\sqrt{y}^n(\alpha\beta)^n\times \mu_n
$$
where $\mu_n$ are the moments of the orthogonal polynomials
defined by $$
(x-b_n)G_n(x)=G_{n+1}(x)+\lambda_n G_{n-1}(x)
$$
where $$b_n=\frac{1/\sqrt{y}+\sqrt{y}+B_n}{1-q}\ \ \ {\rm and}\ \ \ \lambda_n=\frac{A_{n-1}C_n}{(1-q)^2}$$
and $A_n,B_n,C_n$  are the coefficients of the 3-term
recurrence of the Askey Wilson given in \eqref{AWeq} 
with $a\rightarrow a/\sqrt{y},\ b\rightarrow b\sqrt{y},\
c\rightarrow\ c/\sqrt{y},\ d\rightarrow\ d\sqrt{y}$
and $a,b,c,d$ are given by \eqref{eqGPfor}.
\label{Dennis}
\end{corollary}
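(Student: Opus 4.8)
The plan is to deduce this directly from Corollary \ref{pfunc1} by rewriting the bracket $\llangle \widetilde{W}\vert (y\widetilde{D}+\widetilde{E})^n\vert\widetilde{V}\rrangle$ as a moment sequence and tracking a single rescaling of the variable. First I would record the algebraic identity that converts the denominator of Corollary \ref{pfunc1} into the prefactor appearing here: substituting (\ref{subs1}) gives $\gamma\delta=\alpha\beta\cdot abcd$, hence $\alpha\beta-\gamma\delta q^j=\alpha\beta(1-abcd\,q^j)$ and therefore
$$\prod_{j=0}^{n-1}(\alpha\beta-\gamma\delta q^j)=(\alpha\beta)^n(abcd;q)_n.$$
Combining this with Corollary \ref{pfunc1} and with the computation inside its proof (which shows the $n$th moment of $P_m(2\sqrt{y}x+1+y;\frac{a}{\sqrt y},b\sqrt y,\frac{c}{\sqrt y},d\sqrt y\vert q)$ equals $(1-q)^n\llangle\widetilde{W}\vert(y\widetilde{D}+\widetilde{E})^n\vert\widetilde{V}\rrangle$), I obtain
$$Z_n(y;\alpha,\beta,\gamma,\delta;q)=(\alpha\beta)^n(abcd;q)_n\,\llangle\widetilde{W}\vert(y\widetilde{D}+\widetilde{E})^n\vert\widetilde{V}\rrangle.$$
Thus everything reduces to identifying $\llangle\widetilde{W}\vert(y\widetilde{D}+\widetilde{E})^n\vert\widetilde{V}\rrangle$ with $\sqrt{y}^n\mu_n$.

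Next I would read off the entries of the tridiagonal matrix $M:=y\widetilde{D}+\widetilde{E}=\frac{1}{1-q}(\1+y\1+y\dd+\ee)$ and apply Theorem \ref{moment-motzkin} together with Remark \ref{same-moments}: the bracket $\llangle\widetilde{W}\vert M^n\vert\widetilde{V}\rrangle$ is the $n$th moment of the monic orthogonal polynomials whose recurrence coefficients are $b_n^{(M)}=M_{n,n}$ and $\lambda_n^{(M)}=M_{n,n-1}M_{n-1,n}$, since the moment counts weighted Motzkin paths and so depends only on the diagonal and on the products of symmetric off-diagonal entries. Using Lemma \ref{lem1} I would simplify the diagonal as $M_{n,n}=\frac{1}{1-q}\bigl(1+y+\sqrt{y}(D_n^\natural+E_n^\natural)\bigr)$, and using Lemma \ref{lem2} the off-diagonal product as $M_{n,n-1}M_{n-1,n}=\frac{y}{(1-q)^2}(D_{n-1}^\sharp+E_{n-1}^\sharp)(D_{n-1}^\flat+E_{n-1}^\flat)$. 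By Remark \ref{matrix-moments} applied to the substituted parameters $(\frac{a}{\sqrt y},b\sqrt y,\frac{c}{\sqrt y},d\sqrt y)$, the quantities $D_n^\natural+E_n^\natural$ and $(D_{n-1}^\sharp+E_{n-1}^\sharp)(D_{n-1}^\flat+E_{n-1}^\flat)$ are exactly $B_n$ and $A_{n-1}C_n$ of the recurrence (\ref{AWeq}) with those substituted parameters.

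The final step is undoing the remaining dilation of the variable, which is where the $\sqrt{y}^n$ and the precise shapes of $b_n,\lambda_n$ come from. Dividing a moment sequence by $r^n$ corresponds to replacing the monic family $P^{(M)}_n(x)$ by $G_n(x)=r^{-n}P^{(M)}_n(rx)$, whose recurrence coefficients become $b_n^{(M)}/r$ and $\lambda_n^{(M)}/r^2$; taking $r=\sqrt{y}$ turns $M_{n,n}$ into $b_n=\frac{1/\sqrt{y}+\sqrt{y}+B_n}{1-q}$ and $M_{n,n-1}M_{n-1,n}$ into $\lambda_n=\frac{A_{n-1}C_n}{(1-q)^2}$, exactly as stated, while $\llangle\widetilde{W}\vert M^n\vert\widetilde{V}\rrangle=\sqrt{y}^n\mu_n$. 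Substituting into the displayed formula for $Z_n$ gives $Z_n=(abcd;q)_n\sqrt{y}^n(\alpha\beta)^n\mu_n$, and since every step is reversible this establishes the claimed equivalence with Corollary \ref{pfunc1}. I expect the only delicate point to be the bookkeeping of the variable change: one must confirm that the shift by $1+y$ hidden in $P_m(2\sqrt{y}x+1+y;\dots)$ has already been absorbed into the diagonal term $\frac{1}{1-q}(1+y+\cdots)$ of $M$, so that only the pure dilation $x\mapsto\sqrt{y}x$ remains to be undone.
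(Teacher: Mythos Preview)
Your argument is correct and follows exactly the route the paper intends: the paper simply asserts that this corollary is ``equivalent to'' Corollary~\ref{pfunc1}, and you have written out that equivalence in full---the identity $\prod_{j=0}^{n-1}(\alpha\beta-\gamma\delta q^j)=(\alpha\beta)^n(abcd;q)_n$, the identification of $\llangle\widetilde{W}\vert(y\widetilde{D}+\widetilde{E})^n\vert\widetilde{V}\rrangle$ as a moment via Theorem~\ref{moment-motzkin}, Lemmas~\ref{lem1}--\ref{lem2}, and the dilation $x\mapsto\sqrt{y}\,x$. There is nothing to add; your bookkeeping of the shift (absorbed into the diagonal) versus the dilation (producing the $\sqrt{y}^n$) is exactly the point.
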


If we set $y=-1$ in Corollary \ref{spec:cor}
%
and perform a simple change of variables taking 
$-ai \to a, bi \to b, -ci \to c, di \to d$, we give a formula for the 
Askey-Wilson moments ``on the nose."
\begin{corollary}\label{cor:imag}
The $n$th moment of the Askey-Wilson polynomials
$P_m(x; a, b, c, d \vert q)$ is equal to
$$
\mu_n(a,b,c,d\vert q)=\frac{(1-q)^n}{2^n i^n 
\prod_{j=0}^{n-1}(\alpha \beta - \gamma \delta q^j)}  \llangle {W} \vert (- {D}+{E})^n \vert {V} \rrangle
,$$ where $\alpha, \beta, \gamma, \delta$ are given by (\ref{subs2}).
\end{corollary}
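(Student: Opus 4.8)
The plan is to derive this as a specialization of the already-established Corollary~\ref{spec:cor} (equivalently Proposition~\ref{thm1}) at $y=-1$, followed by a relabeling of the four Askey--Wilson parameters. Setting $y=-1$ I take $\sqrt{y}=i$, so the argument $2\sqrt{y}\,x+1+y$ collapses to $2ix$ and the specialized parameters $\tfrac{a}{\sqrt y},\,b\sqrt y,\,\tfrac{c}{\sqrt y},\,d\sqrt y$ become $-ai,\,bi,\,-ci,\,di$. Thus the left-hand side of Corollary~\ref{spec:cor} becomes the $n$th moment of $P_m(2ix;-ai,bi,-ci,di\vert q)$, while its right-hand side becomes $\frac{(1-q)^n}{\prod_{j=0}^{n-1}(\alpha\beta-\gamma\delta q^j)}\llangle W\vert(-D+E)^n\vert V\rrangle$, with $\alpha,\beta,\gamma,\delta$ still given by \eqref{subs1} in the variables $a,b,c,d$ (here I use $\llangle W\vert(-D+E)^n\vert V\rrangle=Z_n(-1;\alpha,\beta,\gamma,\delta;q)$ from Theorem~\ref{th:refine}).

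The factor $2^n i^n$ in the denominator comes from passing from this \emph{scaled} moment back to the genuine Askey--Wilson moment, and the key observation is that the affine shift appearing in Proposition~\ref{thm1}, namely $c=\tfrac12\bigl(\tfrac{1}{\sqrt y}+\sqrt y\bigr)$, \emph{vanishes} at $y=-1$ since $\tfrac1i+i=0$. Hence $P_m(x+c;\,\cdot\,)=P_m(x;\,\cdot\,)$ there, and the relation used already in the proof of Corollary~\ref{spec:cor} — that the moment of $P_m(2\sqrt y\,x+1+y;\,\cdot\,)$ equals $2^n\sqrt y^{\,n}$ times the moment of $P_m(x+c;\,\cdot\,)$ — specializes at $y=-1$ to ``moment of $P_m(2ix;\,\cdot\,)=2^ni^n\,\mu_n(\,\cdot\,)$''. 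Dividing by $2^ni^n$ turns the identity of the previous paragraph into $\mu_n(-ai,bi,-ci,di\vert q)=\frac{(1-q)^n}{2^ni^n\prod_{j}(\alpha\beta-\gamma\delta q^j)}\llangle W\vert(-D+E)^n\vert V\rrangle$. Equivalently, one reads this straight off Proposition~\ref{thm1} at $y=-1$, whose prefactor is literally $\frac{(1-q)^n}{2^n\sqrt y^{\,n}}=\frac{(1-q)^n}{2^ni^n}$ and whose shift is $0$, after converting $\widetilde D,\widetilde E$ into $D,E$ (dividing by $\prod_j(\alpha\beta-\gamma\delta q^j)$) as in Corollary~\ref{spec:cor}.

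Finally I perform the change of variables $-ai\mapsto a,\ bi\mapsto b,\ -ci\mapsto c,\ di\mapsto d$; inverting (using $1/i=-i$) gives the old parameters $ai,\,-bi,\,ci,\,-di$, so that $P_m(x;-(ai)i,(-bi)i,-(ci)i,(-di)i\vert q)=P_m(x;a,b,c,d\vert q)$ and the left-hand side becomes exactly $\mu_n(a,b,c,d\vert q)$. It then remains to verify that \eqref{subs1} in the old variables turns into \eqref{subs2} in the new ones: substituting $a_{\mathrm{old}}c_{\mathrm{old}}=-ac$, $a_{\mathrm{old}}+c_{\mathrm{old}}=(a+c)i$, $b_{\mathrm{old}}d_{\mathrm{old}}=-bd$, $b_{\mathrm{old}}+d_{\mathrm{old}}=-(b+d)i$ produces the denominators $1-ac+ai+ci$ and $1-bd-bi-di$ and flips the signs in the numerators of $\gamma,\delta$, matching \eqref{subs2} exactly. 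I expect the only delicate points to be this $i$-bookkeeping in the substitution and, above all, correctly locating the factor $2^ni^n$ — that is, noticing that the shift $c$ vanishes precisely at $y=-1$, so that the scaled moment is genuinely $2^ni^n$ times $\mu_n$ with no lower-order moments intruding; everything else is a direct specialization of results already proved.
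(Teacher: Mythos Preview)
Your proof is correct and follows exactly the paper's approach: set $y=-1$ in Corollary~\ref{spec:cor} (using that the shift $\tfrac12(\sqrt{y}^{-1}+\sqrt{y})$ vanishes there) and then apply the change of variables $-ai\to a,\ bi\to b,\ -ci\to c,\ di\to d$. You supply considerably more detail than the paper does---in particular the explicit verification that \eqref{subs1} turns into \eqref{subs2} and the careful tracking of the factor $2^n i^n$---but the argument is the same.
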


This finishes the proof of Theorem \ref{moments2}, because 
Theorem \ref{moments2} is equivalent to Corollary \ref{cor:imag}
by setting $y=-1$ in Theorem \ref{th:refine}.

\section{Explicit formulae for Askey-Wilson moments and staircase tableaux}





In this section we will give some explicit formulas for the moments
of the Askey-Wilson polynomials. We first  prove a more general statement.
Recall Remark \ref{substitution}.
\begin{proposition}\label{expansion}
Let $p(x)$ be a degree $n$ polynomial in $x$. Then 
\begin{equation}
\label{polymom}
\begin{gathered}
\oint_C{\frac{p(x)w(x,a,b,c,d\vert q)d{z}}{4\pi iz}}
=\sum_{k=0}^n\frac{(ab,ac,ad;q)_k}{(abcd;q)_k}q^k
\sum_{j=0}^k 
\frac{q^{-j^2}a^{-2j}
 p(\frac{q^{j}a+q^{-j}/a}{2})}{(q,q^{-2j+1}/a^2;q)_{j}(q,a^2q^{1+2j};q)_{k-j}}.
\end{gathered}
\nonumber
\end{equation}
\end{proposition}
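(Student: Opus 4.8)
The plan is to expand the polynomial $p$ in the basis $\phi_k(x):=(ae^{i\theta},ae^{-i\theta};q)_k$, $k=0,1,\dots,n$, and integrate term by term. Since
$\phi_k(x)=\prod_{r=0}^{k-1}(1-2aq^rx+a^2q^{2r})$
is a polynomial in $x=\cos\theta$ of degree exactly $k$, the family $\{\phi_k\}_{k=0}^n$ is a basis for polynomials of degree at most $n$, so I may write $p(x)=\sum_{k=0}^n c_k\phi_k(x)$ for suitable coefficients $c_k$. The right-hand side of Proposition~\ref{expansion} then splits naturally into an outer sum over $k$ (coming from $\oint \phi_k\,w$) and an inner sum over $j$ (coming from the formula for $c_k$).

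The first key step is the integral evaluation
\[
\oint_C\frac{dz}{4\pi iz}\,w(x;a,b,c,d)\,\phi_k(x)=\frac{(ab,ac,ad;q)_k}{(abcd;q)_k}.
\]
To prove this I would use the telescoping identity $(ae^{\pm i\theta};q)_k/(ae^{\pm i\theta};q)_\infty=1/(aq^ke^{\pm i\theta};q)_\infty$, which shows that multiplying $w(x;a,b,c,d)$ by $\phi_k(x)$ has the effect of replacing $a$ by $aq^k$ in the infinite products, up to the scalar $h_0(aq^k,b,c,d)/h_0(a,b,c,d)$. Our normalization of $w$ makes the Askey--Wilson integral equal to $1$ (it is the $m=n=0$ case of the orthogonality relation), so the integral of $w(x;aq^k,b,c,d)$ is again $1$, and the ratio $h_0(aq^k,b,c,d)/h_0(a,b,c,d)$ collapses, via $(abcdq^k;q)_\infty/(abcd;q)_\infty=1/(abcd;q)_k$ and $(ab;q)_\infty/(abq^k;q)_\infty=(ab;q)_k$ (and likewise for $ac,ad$), to exactly $(ab,ac,ad;q)_k/(abcd;q)_k$.

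The second key step is to identify $c_k$ with the inner sum. The roots of $\phi_k$ are the $k$ nodes $x_j:=\tfrac12(aq^j+a^{-1}q^{-j})$ for $j=0,\dots,k-1$, so $\phi_k(x_j)=0$ whenever $j<k$; hence $\{\phi_k\}$ is the Newton basis at the nodes $x_0,x_1,\dots$, and $c_k$ is the divided difference $p[x_0,\dots,x_k]$ divided by the leading coefficient $(-2a)^kq^{\binom k2}$ of $\phi_k$. Writing $p[x_0,\dots,x_k]=\sum_{j=0}^k p(x_j)/\prod_{i\ne j}(x_j-x_i)$ and using the identity $x_j-x_i=\tfrac{1}{2a}(q^j-q^i)(a^2-q^{-i-j})$, the node product factors into $q$-differences and $a^2$-differences; separating the $i<j$ and $i>j$ contributions produces the blocks $(q;q)_j$, $(q;q)_{k-j}$, $(a^2q^j;q)_j$ and $(a^2q^{2j+1};q)_{k-j}$. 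Applying the reflection identity $(x;q)_j=(-x)^jq^{\binom j2}(q^{1-j}/x;q)_j$ to $(a^2q^j;q)_j$ rewrites it as $(q^{-2j+1}/a^2;q)_j$ while absorbing a factor $(-1)^ja^{2j}q^{j^2+\binom j2}$, and one then verifies that all remaining powers of $q$, $a$, $2$ and all signs collapse to the single prefactor $q^kq^{-j^2}a^{-2j}$, yielding precisely the inner sum of Proposition~\ref{expansion}.

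Combining the two steps gives $\oint_C p\,w\,\tfrac{dz}{4\pi iz}=\sum_k c_k\,(ab,ac,ad;q)_k/(abcd;q)_k$, which is the claimed double sum. The main obstacle is the bookkeeping in the third step: one must split $\prod_{i\ne j}(x_j-x_i)$ into the four Pochhammer blocks and check that the accumulated powers of $q$ telescope correctly. Concretely, letting $E$ denote the exponent of $q$ coming from the node product, one needs $\binom k2+E=j^2-k$; this reduces to the elementary evaluation $\sum_{i=j+1}^k(i+j)=(k-j)\tfrac{k+3j+1}{2}$, after which $E=j^2-\tfrac{k(k+1)}2$ and the required relation holds. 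Everything else is a routine, if lengthy, $q$-Pochhammer simplification.
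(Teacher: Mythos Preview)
Your argument is correct and follows the same overall plan as the paper: expand $p$ in the basis $\phi_k(x;a)=(ae^{i\theta},ae^{-i\theta};q)_k$, integrate each $\phi_k$ against $w$ (your ``first key step'' is exactly the paper's Lemma~\ref{lem-weight}), and then identify the expansion coefficients with the inner $j$-sum.

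The one genuine difference lies in how the coefficients $c_k$ are obtained. The paper quotes the $q$-Taylor theorem of Ismail and Stanton \cite{IS}, which expresses $c_k$ in terms of the iterated Askey--Wilson divided difference operator $D_q^k$ evaluated at $x_k=(aq^{k/2}+a^{-1}q^{-k/2})/2$, and then simplifies the resulting sum. You instead observe directly that the $\phi_k$ form the Newton basis at the nodes $x_j=\tfrac12(aq^j+a^{-1}q^{-j})$ (since $\phi_k(x_j)=0$ for $j<k$), so $c_k$ is a classical divided difference divided by the leading coefficient $(-2a)^kq^{\binom{k}{2}}$, and you factor $\prod_{i\ne j}(x_j-x_i)$ by hand. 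Your route is more self-contained (no external reference needed) and arguably more transparent; the paper's route packages the same computation inside a citable theorem. Either way the bookkeeping reduces to the reflection identity $(a^2q^j;q)_j=(-1)^ja^{2j}q^{j^2+\binom{j}{2}}(q^{1-2j}/a^2;q)_j$, after which the prefactor collapses to $q^k\,q^{-j^2}a^{-2j}$ as required.
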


Recall the Askey-Wilson weight function $w$ defined in Section \ref{intro}.
Let $\phi_n(x; a) = (ae^{i\theta},ae^{-i\theta};q)_n$; this is
a polynomial in $x$ of degree $n$.
Note that 
\begin{equation*}
w(x,a,b,c,d\vert q)\phi_n(x; a)=w(x,aq^n,b,c,d\vert q) \frac{h_0(aq^n,b,c,d,q)}{h_0(a,b,c,d,q)}.
\end{equation*}

Therefore
\begin{lemma}\label{lem-weight}
$$
\oint_C{\frac{\phi_n(x; a) w(x,a,b,c,d\vert q)d{z}}{4\pi i z}}=
\frac{h_0(aq^n,b,c,d,q)}{h_0(a,b,c,d,q)}.
$$
\end{lemma}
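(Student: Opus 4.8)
The plan is to integrate, against the measure $\frac{dz}{4\pi i z}$ over the contour $C$, the pointwise identity displayed just before the lemma, namely
\[
w(x,a,b,c,d\vert q)\,\phi_n(x;a)=w(x,aq^n,b,c,d\vert q)\,\frac{h_0(aq^n,b,c,d,q)}{h_0(a,b,c,d,q)}.
\]
Since the ratio of $h_0$'s is a constant independent of $z$, it factors out of the integral, and the claim reduces to showing that the zeroth moment of the shifted Askey--Wilson weight equals $1$, i.e.
\[
\oint_C \frac{dz}{4\pi i z}\, w(x,aq^n,b,c,d\vert q)=1.
\]

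To establish this normalization, I would specialize the orthogonality relation of Section \ref{intro} to $m=n=0$. The Askey--Wilson polynomial $P_0$ is identically $1$, so that relation gives $\oint_C \frac{dz}{4\pi i z}\,w(x,a,b,c,d\vert q)=h_0/h_0=1$; this is precisely the statement that our normalization of $w$ (with the extra $h_0$ in the denominator, cf.\ the Remark following the weight function) makes the total mass equal to $1$. The same computation applies verbatim with $a$ replaced by $aq^n$, since $|aq^n|<|a|<1$ keeps us in the convergence region, yielding the desired zeroth-moment identity for the shifted weight.

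The one point that needs care is that the \emph{same} contour $C$ may be used for the shifted weight. Multiplying $w$ by $\phi_n(x;a)=\prod_{k=0}^{n-1}(1-azq^k)(1-az^{-1}q^k)$ cancels the factors $(ae^{i\theta};q)_n$ and $(ae^{-i\theta};q)_n$ appearing in the denominator of $w$, thereby removing the poles of $w$ at $z=(aq^k)^{-1}$ and $z=aq^k$ for $0\le k\le n-1$ and effecting the shift $a\mapsto aq^n$ in the remaining infinite products. The poles of the shifted weight that must lie inside $C$ are $z=aq^{n+k}$ ($k\in\Z_+$), a subset of the poles $aq^{k}$ already enclosed by $C$; the poles that must lie outside are $z=(aq^{n+k})^{-1}$, a subset of those already excluded by $C$. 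Hence $C$ separates the poles of the shifted weight correctly, and the normalization integral above genuinely evaluates to $1$. Combining the three displays then completes the proof.

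The main obstacle is exactly this last contour bookkeeping: one must verify that cancelling poles via $\phi_n$ and shifting $a\mapsto aq^n$ does not force any deformation of $C$, so that the normalization $\oint_C \frac{dz}{4\pi i z}\,w=1$ transfers unchanged to the shifted weight. Everything else is a one-line consequence of the pointwise identity together with the $m=n=0$ case of orthogonality.
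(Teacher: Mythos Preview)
Your proof is correct and follows exactly the paper's approach: the paper simply writes ``Therefore'' after the pointwise identity, treating the lemma as an immediate consequence of integrating that identity together with the normalization $\oint_C \frac{dz}{4\pi i z}\,w=1$. Your additional verification that the original contour $C$ still correctly separates the poles of the shifted weight $w(x,aq^n,b,c,d\vert q)$ is a detail the paper leaves implicit, but it is a worthwhile point and your argument for it is sound.
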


Our strategy for proving Proposition \ref{expansion} will be to expand
$f(x)$ in the basis $\phi_n(x; a)$  
by using 
a result of 
Ismail and the third author \cite{IS}, 
and then to apply Lemma \ref{lem-weight}.

\begin{theorem}\cite[Theorem 1.1]{IS}
If we write the degree $n$ polynomial 
$p(x)$ as $\sum_{k=0}^n p_k \phi_k(x;a)$ then
$$
p_k=\frac{(q-1)^k}{(2a)^k(q;q)_k}q^{-\frac{k(k-1)}{4}}(D_q^k p)(x_k),
$$
where 
$$
(D_q^k p)(x)=\frac{2^kq^{\frac{k(1-k)}{4}}}{(q^{1/2}-q^{-1/2})^k}\sum_{j=0}^k \left[\begin{array}{c}
k\\j\end{array}\right]_q
\frac{q^{j(k-j)}z^{2j-k}\check{p}(q^{(k-2j)/2}z)}{(q^{1+k-2j}z^2;q)_j(q^{1-k+2j}z^{-2};q)_{k-j}}, 
$$
$x_k=(aq^{k/2}+a^{-1}q^{-k/2})/2$, $x=\cos \theta$, $z=e^{i\theta}$,
$\left[\begin{array}{c}
k\\j\end{array}\right]_q=\frac{(q;q)_k}{(q;q)_j(q;q)_{k-j}}$, and 
$\check{p}(x)=f(\frac{x+x^{-1}}{2})$.
\end{theorem}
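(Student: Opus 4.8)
The plan is to recognize the statement as a finite $q$-Taylor (Newton interpolation) formula for the Askey--Wilson divided difference operator $D_q$ expanded in the Newton-type basis $\{\phi_k(x;a)\}_{k\ge 0}$. Two essentially independent ingredients are needed: first, the way $D_q$ acts on a single basis element $\phi_k(x;a)$, which drives the extraction of the coefficients $p_k$; and second, the explicit closed form for the iterated operator $D_q^k$ recorded in the statement, which is a self-contained identity about $D_q$ that makes no reference to the expansion. I would treat these two parts separately and combine them at the end.

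For the coefficient extraction I would begin from the lowering relation
$$D_q\,\phi_k(x;a) = -\frac{2a\,(1-q^k)}{1-q}\,\phi_{k-1}(x;q^{1/2}a),$$
which follows directly from $(D_q f)(x)=\frac{\check f(q^{1/2}z)-\check f(q^{-1/2}z)}{\check e(q^{1/2}z)-\check e(q^{-1/2}z)}$ with $\check e(z)=(z+z^{-1})/2$, since $\check e(q^{1/2}z)-\check e(q^{-1/2}z)=\tfrac{q^{1/2}-q^{-1/2}}{2}(z-z^{-1})$ and $\check\phi_k(z;a)=(az,a/z;q)_k$ telescopes (one checks the base case $k=1$ against $D_q x=1$). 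Iterating along the parameter shifts $a\mapsto q^{1/2}a$ gives, for $k\ge m$,
$$D_q^m\,\phi_k(x;a) = (-1)^m\frac{(2a)^m}{(1-q)^m}\,q^{m(m-1)/4}\,(q^{k-m+1};q)_m\,\phi_{k-m}(x;q^{m/2}a),$$
while $D_q^m\phi_k=0$ for $k<m$ since $D_q$ lowers degree. The crucial observation is that the evaluation point $x_m=(aq^{m/2}+a^{-1}q^{-m/2})/2$ is exactly the $j=0$ root of $\phi_{k-m}(x;q^{m/2}a)$ whenever $k-m\ge 1$, so evaluating $D_q^m p=\sum_k p_k\,D_q^m\phi_k(x;a)$ at $x=x_m$ annihilates every term with $k>m$, while the $k=m$ term contributes $p_m\cdot(-1)^m(2a)^m(1-q)^{-m}q^{m(m-1)/4}(q;q)_m$ (using $\phi_0\equiv 1$ and $(q^{1};q)_m=(q;q)_m$). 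Solving for $p_m$ and simplifying $(1-q)^m/(-1)^m=(q-1)^m$ yields precisely
$$p_m=\frac{(q-1)^m}{(2a)^m(q;q)_m}\,q^{-m(m-1)/4}\,(D_q^m p)(x_m).$$

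It remains to justify the explicit double-indexed formula for $(D_q^k p)(x)$, which I would prove by induction on $k$ directly from the definition of $D_q$, independently of the expansion above. The base case $k=1$ reduces the $j$-sum (after combining the two $j=0,1$ terms over $1-z^2$) to the defining difference quotient, so it is immediate. The inductive step amounts to applying one more $D_q$, performing the shifts $z\mapsto q^{\pm1/2}z$ inside each summand, and reindexing, whereupon the $q$-binomial coefficients $\binom{k}{j}_q$ reassemble through the $q$-Pascal recurrence and the shifted $q$-factorials $(q^{1+k-2j}z^2;q)_j$ and $(q^{1-k+2j}z^{-2};q)_{k-j}$ recombine. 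I expect this inductive bookkeeping---keeping the two $q$-shifted factorials, the power $q^{j(k-j)}$, and the prefactor $q^{k(1-k)/4}$ all aligned after the reindexing---to be the main obstacle; by contrast the interpolation argument of the previous paragraph is essentially forced once the lowering relation is in hand. A convenient alternative for this last step is to identify the $j$-sum as a $q$-analogue of the finite-difference expression for an iterated divided difference and verify it against the known closed form for the $k$-fold Askey--Wilson operator, but the induction is the most transparent route.
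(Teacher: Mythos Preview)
The paper does not supply its own proof of this theorem: it is quoted verbatim from \cite[Theorem~1.1]{IS} and used as a black box in the proof of Proposition~\ref{expansion}. So there is no in-paper argument to compare against; what you have written is a self-contained proof where the paper offers none.

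That said, your argument is the standard one and is essentially how Ismail and Stanton proceed. The first half (coefficient extraction) is correct as written: the lowering relation $D_q\phi_k(x;a)=-\dfrac{2a(1-q^k)}{1-q}\phi_{k-1}(x;q^{1/2}a)$, its iteration, and the vanishing of $\phi_{k-m}(x_m;q^{m/2}a)$ for $k>m$ are all right, and the algebra leading to $p_m$ checks. The second half---the explicit closed form for $(D_q^kp)(x)$---you only sketch. Induction on $k$ via the $q$-Pascal recurrence does work, and you have correctly anticipated where the bookkeeping concentrates (tracking the shifts in $(q^{1+k-2j}z^2;q)_j$, $(q^{1-k+2j}z^{-2};q)_{k-j}$, and the power $q^{j(k-j)}$ after $z\mapsto q^{\pm1/2}z$). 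One small point: you wrote ``$\check p(x)=f(\tfrac{x+x^{-1}}{2})$'' in describing the statement; presumably $f$ should be $p$ there, and in your base case $k=1$ you should also note that the two terms combine over $(q^{1/2}-q^{-1/2})(z-z^{-1})/2$ rather than over $1-z^2$, matching the denominator of $D_q$.
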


We can now prove Proposition \ref{expansion}.
\begin{proof}
Write $p(x)=\sum_{k=0}^n p_k \phi_k (x;a)$. Then
when $x_k=(aq^{k/2}+a^{-1}q^{-k/2})/2$, we get $z_k=aq^{k/2}$ and
\begin{eqnarray*}
p_k&=& \frac{(q-1)^k}{(2a)^k(q;q)_k}q^{-\frac{k(k-1)}{4}}\frac{2^kq^{\frac{k(1-k)}{4}}}{(q^{1/2}-q^{-1/2})^k}\sum_{j=0}^k \left[\begin{array}{c}
k\\j\end{array}\right]_q
\frac{q^{j(k-j)}(aq^{k/2})^{2j-k}\check{p}(aq^{k-j})}{(q^{1+2k-2j}a^2;q)_j(q^{-1-2k+2j}a^{-2};q)_{k-j}}\\
&=&\frac{q^k q^{k(1-k)/2}}{a^k}\sum_{j=0}^k
\frac{1}{(q;q)_j(q;q)_{k-j}}\frac{q^{j(k-j)}(aq^{k/2})^{2j-k}\check{p}(aq^{k-j})}{(q^{1+2k-2j}a^2;q)_j(q^{-1-2k+2j}a^{-2};q)_{k-j}}\\
&=&q^k\sum_{j=0}^k
\frac{q^{-(k-j)^2}a^{2j-2k}\check{p}(aq^{k-j})}{(q,q^{1+2k-2j}a^2;q)_j(q,q^{-1-2k+2j}a^{-2};q)_{k-j}}.\\
\end{eqnarray*}
Note that 
$$\frac{h_0(aq^k,b,c,d,q)}{h_0(a,b,c,d,q)}=\frac{(ab,ac,ad;q)_k}{(abcd;q)_k}.$$
Therefore
\begin{eqnarray*}
\oint_C{\frac{p(x)w(x,a,b,c,d\vert q) d{z}}{4\pi i z}}&=&\sum_{k=0}^n p_k \frac{h_0(aq^k,b,c,d,q)}{h_0(a,b,c,d,q)}\\
&=& \sum_{k=0}^n \frac{(ab,ac,ad;q)_k}{(abcd;q)_k}q^k\sum_{j=0}^k
\frac{q^{-(k-j)^2}a^{2j-2k}p((aq^{k-j}+a^{-1}q^{j-k})/2)}{(q,q^{1+2k-2j}a^2;q)_j(q,q^{1-2k+2j}a^{-2};q)_{k-j}}\\
&=& \sum_{k=0}^n \frac{(ab,ac,ad;q)_k}{(abcd;q)_k}q^k\sum_{j=0}^k
\frac{q^{-j^2}a^{-j}p((aq^{j}+a^{-1}q^{-j})/2)}{(q,q^{1+2j}a^2;q)_{k-j}(q,q^{1-2j}a^{-2};q)_{j}}.
\end{eqnarray*}
\end{proof}

We can now prove Theorem \ref{moments3}.
\begin{proof}
Setting $p(x)=x^n$ in Proposition \ref{expansion}, we obtain
$$
\mu_n(a,b,c,d\vert q)
=\frac{1}{2^n}\sum_{k=0}^n\frac{(ab,ac,ad;q)_k}{(abcd;q)_k}q^k\\
\times\sum_{j=0}^k q^{-j^2}a^{-2j}
\frac{(q^{j}a+q^{-j}/a)^n}{(q,q^{-2j+1}/a^2;q)_{j}(q,a^2q^{1+2j};q)_{k-j}}.
$$
\end{proof}


\begin{example}\label{moments-example}
\begin{align*}
\mu_1(a,b,c,d)=&
(-a - b - c -d + abc  + abd + acd + bcd)/
 (2(-1 + abcd))\\
\mu_2(a,b,c,d)=&(1 + a^2 + ab + b^2 + ac + bc - a^2bc - ab^2c +
  c^2 - abc^2 + ad \\&+ bd - a^2bd - ab^2d + cd - 
  a^2cd - 4abcd - b^2cd + a^2 b^2cd\\& - ac^2d
    - bc^2d + a^2bc^2d + ab^2c^2d + d^2 - abd^2 - 
  acd^2 - bcd^2\\& + a^2bcd^2 + ab^2cd^2
  + abc^2d^2 - a^2b^2c^2d^2 - q + abq + acq + 
  bcq\\& - a^2bcq - ab^2cq - abc^2q + 
  a^2b^2c^2q + adq + bdq - a^2bdq\\& - ab^2dq + 
  cdq - a^2cdq - 4abcdq - b^2cdq + 
 a^2b^2cdq - ac^2dq\\& - bc^2dq + a^2bc^2dq + 
  ab^2c^2dq - abd^2q + a^2b^2d^2q\\& - acd^2q - 
  bcd^2q + a^2bcd^2q + ab^2cd^2q + 
  a^2c^2d^2q + abc^2d^2q\\&  + b^2c^2d^2q + 
  a^2b^2c^2d^2q)/(4(-1 + abcd)(-1 + abcdq)).
\end{align*}
\end{example}

We also use Proposition \ref{expansion} to prove 
Theorem \ref{explicit-Z}.

\begin{proof}
Now we use the result of Corollary \ref{spec:cor}.
To get the fugacity
 partition function of the ASEP or equivalently the generating
polynomial of staircase tableaux, we have to take  $p(x)=(1+y+2\sqrt{y}x)^n$ 
and substitute
$$
a\rightarrow a/\sqrt{y}, \qquad b\rightarrow b\sqrt{y}, \qquad c\rightarrow c/\sqrt{y}, \qquad d\rightarrow d\sqrt{y}
$$ 
in Proposition \ref{expansion}.
\end{proof}

\begin{example}
\begin{align*}
Z_1=& \alpha y +\delta y+ \beta  + \gamma \\
Z_2=&\alpha^2y^2 + \alpha \delta y^2+ \alpha^2\delta y^2+ \alpha \beta \delta y^2+  \alpha \delta^2 y^2+  \alpha \delta \gamma y^2+  \alpha \delta q y^2+ 
   \delta^2 q y^2+ \alpha \beta y + \alpha^2 \beta y + \alpha \beta^2 y + \\&
\beta \delta y + 
  \alpha \beta \delta y + \alpha \gamma y + \alpha \beta \gamma y + \delta \gamma y + \alpha \delta \gamma y + \beta \delta \gamma y + 
  \delta^2 \gamma y + \delta \gamma^2 y + \alpha \beta q y + \beta \delta q y + \\& \alpha \gamma q y +
  \delta \gamma q y + \beta^2 + \beta \gamma + \alpha \beta \gamma  + \beta^2 \gamma  + 
  \beta \delta \gamma  + \beta \gamma^2  +\beta \gamma q  + \gamma^2 q. 
\end{align*}
\end{example}
\vskip10pt

\subsection{Askey Wilson moments and the partition function  when $q=0$}

If $q=0$ the moments may be computed in another way, using a contour
integral and the residue calculus.  Recall 
the substitutions from Remark \ref{substitution}.

\begin{proposition}\label{prop-residue}
Let $p(x)$ be any polynomial in $x$, and let
$f(z,a,b,c,d)=(1-az)(1-a/z)(1-bz)(1-b/z)(1-cz)(1-c/z)(1-dz)(1-d/z).$
 Then
\begin{equation}
\label{q=0mom}
\begin{aligned}
\oint_C&{\frac{p(x)w(x,a,b,c,d\vert 0) d{z}}{4\pi i z }}=
\frac{-1}{2}\frac{(1-ab)(1-ac)(1-ad)(1-bc)(1-bd)(1-cd)}{1-abcd}\\
\times &\biggl( \frac{p(\frac{a+1/a}{2})(a-1/a)^2}
{(1-a^2)(1-ab)(1-b/a)(1-ca)(1-c/a)(1-da)(1-d/a)}\\
&+\frac{p(\frac{b+1/b}{2})(b-1/b)^2}{(1-b^2)(1-ab)(1-a/b)(1-cb)(1-c/b)(1-db)(1-d/b)}\\
&+\frac{p(\frac{c+1/c}{2})(c-1/c)^2}{(1-c^2)(1-ac)(1-a/c)(1-cb)(1-b/c)(1-dc)(1-d/c)}\\
&+\frac{p(\frac{d+1/d}{2})(d-1/d)^2}{(1-d^2)(1-ad)(1-a/d)(1-db)(1-b/d)(1-dc)(1-c/d)}\\
&+Res \biggl(\frac{p(\frac{z+1/z}{2})(z-1/z)^2}{zf(z,a,b,c,d)},z=0 \biggr)
\biggr)
\end{aligned}
\nonumber
\end{equation}
\end{proposition}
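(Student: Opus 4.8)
The plan is to specialize the Askey-Wilson weight to $q=0$ and then evaluate the resulting \emph{rational} integral directly by the residue theorem. First I would use the elementary specialization $(x;0)_\infty=1-x$ to read off the $q=0$ values of the relevant quantities. One gets
$$h_0\big|_{q=0}=\frac{1-abcd}{(1-ab)(1-ac)(1-ad)(1-bc)(1-bd)(1-cd)},$$
and, writing $z=e^{i\theta}$ so that $e^{\pm 2i\theta}=z^{\pm 2}$, the eight infinite products in the denominator of $w$ each collapse to a single linear factor, giving exactly
$$w(x,a,b,c,d\vert 0)=\frac{(1-z^2)(1-z^{-2})}{h_0\,f(z,a,b,c,d)}.$$
The key simplification is the identity $(1-z^2)(1-z^{-2})=-(z-1/z)^2$, which turns the whole integrand into a rational function of $z$ with no remaining $q$-dependence.

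With this in hand, the integral becomes
$$\oint_C\frac{p(x)\,w(x,a,b,c,d\vert 0)\,dz}{4\pi i z}=\frac{-1}{2h_0}\cdot\frac{1}{2\pi i}\oint_C\frac{p(\frac{z+1/z}{2})(z-1/z)^2}{z\,f(z,a,b,c,d)}\,dz,$$
and I would apply the residue theorem, using $\frac{1}{2\pi i}\oint_C=\sum_{z_0}\mathrm{Res}(\,\cdot\,,z_0)$ summed over the poles $z_0$ enclosed by $C$. At $q=0$ the defining description of $C$ — enclosing $z=aq^k,bq^k,cq^k,dq^k$ and excluding their reciprocals — degenerates: the points $aq^k,\dots$ with $k\geq 1$ all collapse to the origin, while the reciprocal poles run off to infinity. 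Hence the poles inside $C$ are precisely the simple poles at $z=a,b,c,d$ (arising from the factors $(1-a/z),\dots,(1-d/z)$) together with the pole at $z=0$, and $z=1/a,\dots,1/d$ stay outside. Substituting $1/h_0$ then produces exactly the prefactor displayed in the statement.

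It remains to compute the residues. For $z=a$ I would write $(1-a/z)=(z-a)/z$, cancel the factor $z$ against the $z$ in the denominator, and evaluate the rest at $z=a$; this gives that the residue at $z=a$ equals
$$\frac{p(\frac{a+1/a}{2})(a-1/a)^2}{(1-a^2)(1-ab)(1-b/a)(1-ac)(1-c/a)(1-ad)(1-d/a)},$$
and symmetrically for $b,c,d$ — these are exactly the first four terms. The residue at $z=0$, which is genuinely present once $\deg p\geq 2$ (since $p(\frac{z+1/z}{2})$ contributes a pole of order $n$ there), is left unevaluated as the final term. The only real subtlety — and the step I would treat most carefully — is the justification that at $q=0$ the contour $C$ encloses precisely $\{0,a,b,c,d\}$ and none of the reciprocal poles; once the pole locations are pinned down, each residue collapses cleanly thanks to the factored form of $f$, and the rest is a routine residue calculation.
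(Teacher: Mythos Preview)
Your proposal is correct and follows essentially the same route as the paper: specialize $h_0$ and $w$ at $q=0$ via $(x;0)_\infty=1-x$, rewrite the integrand as $-(z-1/z)^2/(h_0\,z\,f)$, apply the residue theorem, and read off the five residues at $z=0,a,b,c,d$. The only cosmetic difference is that the paper simply assumes $|a|,|b|,|c|,|d|<1$ so that $C$ may be taken to be the unit circle (with analytic continuation afterward), whereas you argue directly from the $q\to 0$ degeneration of the contour description; both pin down the same set of enclosed poles.
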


\begin{proof}
Assume that $|a|, |b|, |c|, |d|<1$; these 
conditions are not necessary later. Using the Cauchy Residue
Theorem, we get 
\begin{equation}\label{residue}
\oint_C \frac{p(x) w(x,a,b,c,d \vert 0) d{z}}{4\pi i z}
= \frac{1}{2} \sum_k Res \left(\frac{p(x)w(x,a,b,c,d\vert 0)}{z},
  z=a_k \right),
\end{equation}
where the $a_k$ are the poles inside $C$.

Note that at $q=0$, we have
\begin{align*}
h_0(a,b,c,d,0) &= \frac{1-abcd}{(1-ab)(1-ac)(1-ad)(1-bc)(1-bd)(1-cd)}, \text{ and }\\
w(\cos \theta,a,b,c,d\vert 0)&=\frac{-(z-1/z)^2}{h_0(a,b,c,d,0) f(z,a,b,c,d)}.
\end{align*}
There are five poles inside $C$: $z=a,b,c,d$ and $0.$
 Substituting into (\ref{residue}) gives the result.
\end{proof}

Let $H_n(a,b,c,d)$ be the homogeneous symmetric function of degree $n$ 
in the $8$ variables
$a$, $b$, $c$, $d$, $1/a$, $1/b$, $1/c$, $1/d$.

\begin{theorem} 
\label{q0partfunc}
The partition function $Z_n(y;\alpha,\beta,\gamma,\delta;0)$ is
$$
\begin{aligned}
&\frac{-(\alpha\beta)^n}{2}(1-AB)(1-AC)(1-AD)(1-BC)(1-BD)(1-CD)\\
&\times \biggl(\frac{(1+(1/A+A)\sqrt{y}+y)^n(A-1/A)^2}
{(1-A^2)(1-AB)(1-B/A)(1-CA)(1-C/A)(1-DA)(1-D/A)}\\
&+\frac{(1+(1/B+B)\sqrt{y}+y)^n(B-1/B)^2}
{(1-B^2)(1-AB)(1-A/B)(1-CB)(1-C/B)(1-DB)(1-D/B)}\\
&+\frac{(1+(1/C+C)\sqrt{y}+y)^n(C-1/C)^2}
{(1-C^2)(1-AC)(1-A/C)(1-CB)(1-B/C)(1-DC)(1-D/C)}\\
&+\frac{(1+(1/D+D)\sqrt{y}+y)^n(D-1/D)^2}
{(1-D^2)(1-AD)(1-A/D)(1-DB)(1-B/D)(1-DC)(1-C/D)}\\
&+\frac{1}{ABCD}\sum_{j=0}^n\sum_{k=0}^n\binom{n}{k}\binom{n}{j}
\sqrt{y}^{n+k-j}\\
&\times(H_{n-2-k-j}(A,B,C,D)-2H_{n-4-k-j}(A,B,C,D)+H_{n-6-k-j}(A,B,C,D)
\biggr)
\end{aligned}
$$
where $A=a/\sqrt{y}$, $B=b\sqrt{y}$, $C=c/\sqrt{y}$, $D=d\sqrt{y}$ and
$a,b,c,d$ as in Proposition \ref{GPfor}.
\end{theorem}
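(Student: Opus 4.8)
The plan is to combine Corollary \ref{spec:cor}, specialized to $q=0$, with the residue formula of Proposition \ref{prop-residue}, running exactly the argument used for Theorem \ref{explicit-Z} but now exploiting the closed residue form available at $q=0$. First I would record the $q=0$ shape of Corollary \ref{spec:cor}: since $(1-q)^n=1$ and $\prod_{j=0}^{n-1}(\alpha\beta-\gamma\delta\cdot 0^j)=(\alpha\beta-\gamma\delta)(\alpha\beta)^{n-1}$ at $q=0$, solving for $Z_n$ gives
$$Z_n(y;\alpha,\beta,\gamma,\delta;0)=(\alpha\beta-\gamma\delta)(\alpha\beta)^{n-1}\,\mu_n^{\mathrm{spec}},$$
where $\mu_n^{\mathrm{spec}}$ is the $n$th moment of $P_m(2\sqrt{y}\,x+1+y;A,B,C,D\mid 0)$ with $A=a/\sqrt{y}$, $B=b\sqrt{y}$, $C=c/\sqrt{y}$, $D=d\sqrt{y}$. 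This moment is the contour integral $\oint_C\frac{p(x)\,w(x,A,B,C,D\mid 0)\,dz}{4\pi iz}$ with $p(x)=(1+y+2\sqrt{y}\,x)^n$, so Proposition \ref{prop-residue} applies verbatim with parameters $A,B,C,D$.

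Next comes the prefactor. Using the $q=0$ values in (\ref{subs1}) one checks $\alpha\beta=\frac{1}{(1+ac+a+c)(1+bd+b+d)}$ and $\gamma\delta=\frac{abcd}{(1+ac+a+c)(1+bd+b+d)}$, whence $\alpha\beta-\gamma\delta=(1-abcd)\,\alpha\beta=(1-ABCD)\,\alpha\beta$. Thus the factor $(\alpha\beta-\gamma\delta)(\alpha\beta)^{n-1}$ cancels against the $\frac{1}{1-ABCD}$ appearing in Proposition \ref{prop-residue} to leave exactly $(\alpha\beta)^n$, reproducing the stated prefactor $-\tfrac{(\alpha\beta)^n}{2}(1-AB)(1-AC)(1-AD)(1-BC)(1-BD)(1-CD)$. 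The four ``pole'' summands of Proposition \ref{prop-residue} at $z=A,B,C,D$ are then transcribed directly, using $p(\tfrac{A+1/A}{2})=(1+(1/A+A)\sqrt{y}+y)^n$, and these are precisely the first four terms of the asserted formula.

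The only substantive step is evaluating the residue at $z=0$. Here I would put the integrand into factored form: from $(1-X/z)=\frac{z-X}{z}$ and $z-X=-X(1-z/X)$ one gets $f(z,A,B,C,D)=ABCD\,z^{-4}\prod_{X}(1-Xz)(1-z/X)$, while the factorization $1+y+\sqrt{y}(z+1/z)=\frac{\sqrt{y}}{z}(z+\sqrt{y})(z+1/\sqrt{y})$ together with $(z-1/z)^2=z^{-2}(z^2-1)^2$ turns the integrand into
$$\frac{\sqrt{y}^{\,n}\,(z+\sqrt{y})^n(z+1/\sqrt{y})^n(z^2-1)^2\,z^{1-n}}{ABCD\,\prod_X(1-Xz)(1-z/X)}.$$
The residue at $z=0$ is therefore $\frac{1}{ABCD}$ times the coefficient of $z^{\,n-2}$ in the analytic remaining factor, which accounts for the $\frac{1}{ABCD}$ in the theorem.

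To finish I would use that $\prod_X\frac{1}{(1-Xz)(1-z/X)}=\sum_{m\geq 0}H_m(A,B,C,D)\,z^m$ is exactly the generating function of the complete homogeneous symmetric functions in $A,B,C,D,1/A,1/B,1/C,1/D$, then expand $(z+\sqrt{y})^n=\sum_k\binom{n}{k}\sqrt{y}^{\,n-k}z^k$, $(z+1/\sqrt{y})^n=\sum_j\binom{n}{j}\sqrt{y}^{\,j-n}z^j$, and $(z^2-1)^2=z^4-2z^2+1$, and extract the coefficient of $z^{n-2}$. Collecting powers of $\sqrt{y}$ gives $\sqrt{y}^{\,n-k+j}$, and the factor $(z^2-1)^2$ produces the combination $H_{n-2-k-j}-2H_{n-4-k-j}+H_{n-6-k-j}$; relabeling $k\leftrightarrow j$ yields the displayed double sum. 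The main obstacle is purely bookkeeping—correctly tracking the powers of $z$ and $\sqrt{y}$ across this product and recognizing the symmetric-function generating series—but no new idea is needed beyond Corollary \ref{spec:cor} and Proposition \ref{prop-residue}.
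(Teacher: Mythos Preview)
Your proposal is correct and follows essentially the same route as the paper: invoke Corollary~\ref{spec:cor} at $q=0$ to express $Z_n(y;\alpha,\beta,\gamma,\delta;0)$ as a constant times the $q=0$ Askey--Wilson moment with parameters $A,B,C,D$, then apply Proposition~\ref{prop-residue} with $p(x)=(1+y+2\sqrt{y}\,x)^n$ and extract the residue at $z=0$ via the generating series $\sum_s H_s z^s$. Your explicit check that $(\alpha\beta-\gamma\delta)(\alpha\beta)^{n-1}\cdot\frac{1}{1-ABCD}=(\alpha\beta)^n$ fills in a detail the paper leaves implicit, and your residue bookkeeping (including the final relabeling $k\leftrightarrow j$) matches the paper's computation.
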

\begin{proof}
Use Proposition \ref{prop-residue} with $p(x)=(1+y+2\sqrt{y} x)^n$.
We need to compute the residue of
$$
\frac{p(\frac{z+1/z}{2})(z-1/z)^2}{zf(z,a,b,c,d)}
$$
at $z=0$ with $
f(z,a,b,c,d)=(1-az)(1-a/z)(1-bz)(1-b/z)(1-cz)(1-c/z)(1-dz)(1-d/z).$ 
Now we substitute 
$a\rightarrow a/\sqrt y$, $b\rightarrow b\sqrt y$, $c\rightarrow c/\sqrt y$, $d\rightarrow d\sqrt y$.
Since
$$
\frac{1}{f(z,A,B,C,D)}=
z^4/ABCD\sum_{s=0}^\infty H_s(A,B,C,D) z^s,
$$
we need the residue of 
$$\frac{(\sqrt{y}(z+1/z)+y+1)^n(z-1/z)^2
z^3}{ABCD} \sum_{s=0}^\infty H_s(A,B,C,D) z^s
$$
at $z=0$ or equivalently the coefficient of $z^n$ in
$$
\frac{(\sqrt{y}+z)^n(1+\sqrt{y}z)^n(z-1/z)^2
z^4}{ABCD} \sum_{s=0}^\infty H_s(A,B,C,D) z^s,
$$
which is
$$
\sum_{k=0}^n {n \choose k}\sqrt{y}^{n-k}
\sum_{j=0}^n {n \choose j}\sqrt{y}^j
(H_{n-2-k-j}-2H_{n-4-k-j}+H_{n-6-k-j}).
$$
\end{proof}

\begin{example}
$$
\begin{aligned}
Z_2(y;\alpha,\beta,\gamma,\delta;0)&=y^2\alpha^2 + y^2\alpha \delta + y^2\alpha^2 \delta + y^2\alpha \beta \delta 
+ y^2\alpha \delta^2 + y^2\alpha \delta \gamma + y\alpha \beta  + 
y\alpha^2\beta  \\&
+ y\alpha \beta^2  + y\beta \delta  + y\alpha \beta \delta  
+ y\alpha  \gamma  + y\alpha \beta  \gamma  +
  y\delta  \gamma  + y\alpha \delta  \gamma  + y\beta \delta  \gamma  \\
  &
  + y\delta^2  \gamma  + y\delta  \gamma^2  + 
 \beta^2  + \beta  \gamma + \alpha \beta  \gamma  + 
 \beta^2  \gamma + \beta \delta  \gamma + 
 \beta  \gamma^2.
\end{aligned}
$$
\end{example}

The following corollaries below follows directly from Theorem \ref{q0partfunc}.

\begin{corollary}\label{genfun0}
When $\alpha=\beta=1$ and $\gamma=\delta=q=0$, the 
generating function for the numbers 
$Z_n(y)=Z_n(y; \alpha,\beta,\gamma,\delta;q)$ is 
$$\sum_{n=0}^\infty Z_n(y) w^n=(1+t)(1+yt)
$$ where 
$w=\frac{t}{(1+t)(1+yt)}$.
Note that this is the generating function of the Narayana numbers \cite{Williams}.
\end{corollary}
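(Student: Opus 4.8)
The plan is to first determine $Z_n(y)=Z_n(y;1,1,0,0;0)$ explicitly, recognize it as a Narayana polynomial, and then read off the parametric generating function by Lagrange inversion. A caveat must be flagged immediately: substituting $\alpha=\beta=1$, $\gamma=\delta=q=0$ into the formulas of Proposition \ref{GPfor} forces $a=b=c=d=0$, so the closed form in Theorem \ref{q0partfunc} (its four polar terms together with the $H_s$-sum) degenerates into an indeterminate $0\cdot\infty$ expression. Rather than resolve the coalescence of the poles $z=a,b,c,d$ with $z=0$ by a limiting argument, I would step back to Corollary \ref{spec:cor} and Proposition \ref{prop-residue}, where the degeneration is harmless.

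Concretely, at $A=B=C=D=0$ one has $h_0=1$ and $f(z,0,0,0,0)=1$, whence $w(x,0,0,0,0\vert 0)=-(z-1/z)^2$; and under $x=(z+z^{-1})/2$ the test polynomial $p(x)=(1+y+2\sqrt{y}\,x)^n$ factors as $(1+\sqrt{y}z)^n(1+\sqrt{y}/z)^n=:G(z)$. Since the prefactor $\tfrac{(1-q)^n}{\prod_{j}(\alpha\beta-\gamma\delta q^j)}$ equals $1$ in this specialization, Corollary \ref{spec:cor} identifies $Z_n(y)$ with the single residue at the origin:
\[
Z_n(y)=\oint_C\frac{G(z)\,\bigl(-(z-1/z)^2\bigr)}{4\pi i z}\,dz=[z^0]G(z)-[z^2]G(z),
\]
where the second equality uses $(z-1/z)^2=z^2-2+z^{-2}$ and the symmetry $G(z)=G(1/z)$. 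Extracting the two coefficients from $G(z)=\sum_{i,j}\binom{n}{i}\binom{n}{j}(\sqrt{y})^{i+j}z^{i-j}$ yields
\[
Z_n(y)=\sum_{i}\Bigl(\binom{n}{i}^2-\binom{n}{i-1}\binom{n}{i+1}\Bigr)y^{i}.
\]

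It then remains to recognize these coefficients and to sum the series. For the former I would invoke the classical $2\times2$ minor identity $\binom{n}{i}^2-\binom{n}{i-1}\binom{n}{i+1}=\tfrac{1}{n+1}\binom{n+1}{i}\binom{n+1}{i+1}$, the Narayana number $N(n+1,i+1)$; thus $Z_n(y)=\sum_{i=0}^n N(n+1,i+1)y^i$. For the latter I would apply Lagrange inversion to $w=t/\phi(t)$ with $\phi(t)=(1+t)(1+yt)$: because $\phi(0)=1$, Lagrange--B\"urmann gives $[w^n]\phi(t)=\tfrac{1}{n+1}[t^n]\phi(t)^{n+1}$ for $n\geq1$, and expanding $\phi(t)^{n+1}=(1+t)^{n+1}(1+yt)^{n+1}$ produces exactly $\tfrac{1}{n+1}\sum_i\binom{n+1}{i+1}\binom{n+1}{i}y^i=Z_n(y)$, with the constant term $\phi(0)=1=Z_0(y)$ covering $n=0$. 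This gives $\sum_{n\geq0}Z_n(y)w^n=(1+t)(1+yt)$, as claimed.

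I expect the genuine obstacle to be only the first, bookkeeping step: the specialization is \emph{totally} degenerate, so Theorem \ref{q0partfunc} cannot be quoted verbatim and one must reorganize the four coalescing polar residues as a single residue at the origin. Everything afterward is standard --- the binomial identity is well known and the Lagrange inversion is a one-line computation. (As an alternative first step, one may cite directly that the staircase tableaux at $\alpha=\beta=1$, $\gamma=\delta=q=0$ are enumerated by the Narayana numbers, cf.\ \cite{Williams}, and then invoke only the Lagrange inversion.)
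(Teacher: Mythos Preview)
Your proof is correct and follows the same residue-calculus route the paper intends: the paper simply asserts that the corollary ``follows directly from Theorem~\ref{q0partfunc}''. You are right that the specialization $\alpha=\beta=1$, $\gamma=\delta=q=0$ forces $a=b=c=d=0$, so the four polar terms and the $1/ABCD$ factor in Theorem~\ref{q0partfunc} collapse; by going back to Corollary~\ref{spec:cor} and computing the single residue at the origin you are doing exactly what the proof of Theorem~\ref{q0partfunc} does, but in the degenerate regime where all five poles coalesce. The identification with Narayana numbers and the Lagrange--B\"urmann step are what the paper needs but does not write out; they are standard and your execution is clean. (The paper also notes, in the remark following Corollary~\ref{Fibonacci}, that one can alternatively prove these three corollaries by a recurrence with a catalytic variable.)
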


\begin{corollary}\label{genfun}
When $\alpha=\beta=\gamma=1$ and $\delta=q=0$, the 
generating function for the numbers 
$Z_n(y)=Z_n(y; \alpha,\beta,\gamma,\delta;q)$ is 
$$\sum_{n=0}^\infty Z_n(y) w^n=
\frac{(1+t)(1+yt)}{(1-t-t^2)},$$ where 
$w=\frac{t}{(1+t)(1+yt)}$.
Note that when $y=1$, $Z_n(y)$  is the sequence
Sloane A026671, and when $y=0$ $Z_n(y)$  is the sequence odd Fibonacci numbers.
\end{corollary}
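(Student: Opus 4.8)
The plan is to specialize the $q=0$ residue formula (Proposition~\ref{prop-residue}, which underlies Theorem~\ref{q0partfunc}) at $\alpha=\beta=\gamma=1$, $\delta=q=0$, and then sum the resulting expression for $Z_n(y)$ against $w^n$ by Lagrange inversion. First I would invert the parametrization of Proposition~\ref{GPfor} at these values. Since $\delta=0$ forces $b=d=0$, while $\alpha=\gamma=1$, $q=0$ give $1-q-\alpha+\gamma=1$ and $4\alpha\gamma=4$, formula \eqref{eqGPfor} yields $a=\varphi=\tfrac{1+\sqrt5}{2}$ and $c=\bar\varphi=\tfrac{1-\sqrt5}{2}$, so that $a+c=1$ and $ac=-1$. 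Writing $A=a/\sqrt y$, $C=c/\sqrt y$ (and $B=D=0$), these identities collapse almost every factor: the residue prefactor reduces to $-\tfrac12(1-AC)=-\tfrac12(1+1/y)$, and $AC=-1/y$.

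The key point is that once $b=d=0$ the weight function loses its $b$- and $d$-poles entirely, so I would work from Proposition~\ref{prop-residue} directly rather than from the fully expanded Theorem~\ref{q0partfunc}, whose literal $b,d$-terms contain $1/b,1/d$ and are singular at $b=d=0$. With $p(x)=(1+y+2\sqrt y\,x)^n$ (the polynomial dictated by Corollary~\ref{spec:cor}, whose prefactor $\prod_{j}(\alpha\beta-\gamma\delta q^j)/(1-q)^n$ equals $1$ here), the factorization $p(\tfrac{z+1/z}{2})=z^{-n}(z+\sqrt y)^n(1+\sqrt y\,z)^n$ turns the integrand into $g(z)=\dfrac{(z+\sqrt y)^n(1+\sqrt y\,z)^n(z^2-1)^2}{z^{\,n+1}(1-Az)(1-Cz)(z-A)(z-C)}$. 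There are now only three poles inside $C$, at $z=A$, $z=C$, and $z=0$. I would compute all three residues: those at $A$ and $C$ are proportional to the pure $n$-th powers $\bigl(\varphi(\varphi+y)\bigr)^n$ and $\bigl(\bar\varphi(\bar\varphi+y)\bigr)^n$ (using $1+\varphi=\varphi^2$, $1/\varphi=\varphi-1$), while the residue at $0$ is the coefficient extraction $[z^n]\dfrac{(z+\sqrt y)^n(1+\sqrt y z)^n(z^2-1)^2}{(1-Az)(1-Cz)(z-A)(z-C)}$.

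Summing against $w^n$ then splits into two tractable pieces. The $A$- and $C$-residues give geometric series $\bigl(1-w\,\varphi(\varphi+y)\bigr)^{-1}$ and $\bigl(1-w\,\bar\varphi(\bar\varphi+y)\bigr)^{-1}$, while the residue-at-$0$ term has exactly the shape $\sum_n w^n[z^n]\Phi(z)^n\Psi(z)$ with kernel $\Phi(z)=(z+\sqrt y)(1+\sqrt y z)$ and $\Psi(z)=\dfrac{(z^2-1)^2}{(1-Az)(1-Cz)(z-A)(z-C)}$, so Lagrange inversion gives $\dfrac{\Psi(Z)}{1-w\Phi'(Z)}$ where $Z=w\Phi(Z)$. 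The substitution $z=\sqrt y\,s$ turns $Z=w\Phi(Z)$ into $w=\dfrac{s}{(1+s)(1+ys)}$, which is precisely the stated parametrization $w=\dfrac{t}{(1+t)(1+yt)}$ with $t=s$; this is what forces the final answer to be rational in $t$.

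The last step is to add the three contributions, clear the $-\tfrac12(1-AC)$ prefactor, and simplify. The main obstacle is exactly this final combination: each piece separately carries spurious poles (the two boundary geometric factors, and the Lagrange term's $\tfrac{1}{1-w\Phi'(Z)}$), and I expect them to cancel only after invoking $a+c=1$, $ac=-1$ together with the Lagrange relation to rewrite everything in $t$; checking that the numerator $(z^2-1)^2$ and the boundary terms conspire to leave exactly $\dfrac{(1+t)(1+yt)}{1-t-t^2}$ is the delicate algebra, and I would verify it against the small cases $Z_1,Z_2$ as a sanity check. Finally, setting $y=1$ gives $w=\tfrac{t}{(1+t)^2}$ with generating function $\tfrac{(1+t)^2}{1-t-t^2}$ (whose coefficients are Sloane A026671), and setting $y=0$ gives $w=\tfrac{t}{1+t}$ with generating function $\tfrac{1+t}{1-t-t^2}$ (whose coefficients are the odd-indexed Fibonacci numbers), which establishes the two claimed specializations.
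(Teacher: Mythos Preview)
Your proposal is correct and follows essentially the same route the paper intends: the paper merely asserts that Corollary~\ref{genfun} ``follows directly from Theorem~\ref{q0partfunc}'', and you have supplied the details of that derivation, including the necessary summation over $n$. Your observation that the literal formula of Theorem~\ref{q0partfunc} is singular at $b=d=0$ (forcing a return to the underlying Proposition~\ref{prop-residue}) is a genuine point the paper glosses over, and your use of Lagrange--B\"urmann inversion on the residue-at-$0$ piece is precisely what produces the parametrization $w=t/((1+t)(1+yt))$ that the paper states without explanation. The final algebraic recombination you flag as ``delicate'' is real but routine once everything is expressed in $t$; your sanity check against $Z_1,Z_2$ is the right safeguard.
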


\begin{example}\label{genfunexample}
Set $\alpha=\beta=\gamma=1$ and $\delta=q=0$.  The polynomials
$Z_n(y)$ are
\begin{align*}
Z_0 &= 1 \\
Z_1 &= 2 + y\\
Z_2 &= 5 + 5y + y^2\\
Z_3 &= 13 + 20y + 9y^2 + y^3\\
Z_4 &= 34 + 72y + 52y^2 + 14y^3 + y^4\\
Z_5 &= 89 + 242y + 245y^2 + 110y^3 + 20y^4 + y^5
\end{align*}
\end{example}

\begin{corollary}\label{Fibonacci}
When $\alpha=\beta=\gamma=\delta=1$ and $q=0$, the 
generating function for the numbers 
$Z_n(y)=Z_n(y; \alpha,\beta,\gamma,\delta;q)$ is 
$$\sum_{n=1}^\infty Z_n(y) w^n=\frac{2(1+y)t(1+t)(1+yt)}{(1-t-t^2)(1-yt-y^2t^2)},$$ where 
$w=\frac{t}{(1+t)(1+yt)}$.
\end{corollary}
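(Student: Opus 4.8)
The plan is to derive Corollary \ref{Fibonacci} directly from Theorem \ref{q0partfunc} by specializing $\alpha=\beta=\gamma=\delta=1$ and $q=0$, exactly as the corollary's attribution promises. First I would compute the parameters $a,b,c,d$ from \eqref{eqGPfor} under this specialization: with $q=0$, $\alpha=\gamma=1$ one gets $a,c = \bigl(-\alpha+\gamma \pm \sqrt{(1+\ldots)^2+4}\bigr)/2$ type expressions, and similarly for $b,d$; the symmetric choice $\alpha=\beta=\gamma=\delta=1$ should make $\{a,c\}$ and $\{b,d\}$ coincide as sets (or be negatives/reciprocals of one another), which is what produces the doubled Fibonacci structure. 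The key simplification is that the four residue terms at $z=A,B,C,D$ collapse in pairs because the parameters coincide, and the $z=0$ residue term organizes into the $H_s$ homogeneous-symmetric-function sum.

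Second, I would substitute into the compositional generating-function framework. The appearance of $w=\tfrac{t}{(1+t)(1+yt)}$ in Corollaries \ref{genfun0}, \ref{genfun}, and \ref{Fibonacci} strongly suggests a uniform Lagrange-inversion argument: one sets $w = t/((1+t)(1+yt))$ and shows that $\sum_n Z_n(y) w^n$ equals a rational function in $t$. Concretely, I would write each residue term $(1+(1/X+X)\sqrt{y}+y)^n$ (for $X\in\{A,B,C,D\}$) and recognize that summing $\sum_n (\cdots)^n w^n$ is a geometric series in $n$, so the whole generating function becomes a finite sum of terms of the form $\bigl(1 - (1+(1/X+X)\sqrt y + y)\,w\bigr)^{-1}$. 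The factor $1+(1/X+X)\sqrt{y}+y$ is exactly $(1+X\sqrt y/\text{something})\cdots$; under $w=t/((1+t)(1+yt))$ each such denominator should factor nicely, producing the claimed denominator $(1-t-t^2)(1-yt-y^2t^2)$. I would verify that the two quadratic factors arise as the two ``Fibonacci-type'' denominators $1-t-t^2$ (from the scale of $A,C$) and $1-yt-y^2t^2$ (from the $y$-scaled parameters $B,D$, since $B=b\sqrt y$, $D=d\sqrt y$).

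Third, I would handle the $z=0$ residue contribution. Summing the double sum $\sum_{j,k}\binom{n}{k}\binom{n}{j}\sqrt y^{\,n+k-j}(H_{n-2-k-j}-2H_{n-4-k-j}+H_{n-6-k-j})$ against $w^n$ is the most delicate part, but the generating function for the $H_s$ is simply $1/f(z,A,B,C,D)$ up to the stated $z^4/(ABCD)$ factor, so I would package the residue back into a contour/coefficient-extraction form and combine it with the residue terms. In practice, rather than manipulate $H_s$ directly, I expect it is cleanest to return to the integral representation: $\sum_n Z_n(y)w^n$ is a generating function whose closed form I would obtain by summing the geometric series in $n$ \emph{inside} the contour integral of Proposition \ref{prop-residue}, i.e. replacing $(1+y+2\sqrt y\,x)^n$ by $\bigl(1-w(1+y+2\sqrt y\,x)\bigr)^{-1}$, and then re-evaluating the five residues with this new integrand. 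This bypasses the $H_s$ bookkeeping entirely and should yield a manifestly rational answer in $t$ after the substitution $w=t/((1+t)(1+yt))$.

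The main obstacle I anticipate is the algebraic bookkeeping of the parameter specialization: with all of $\alpha,\beta,\gamma,\delta$ equal to $1$, the square roots in \eqref{eqGPfor} give $a,b,c,d$ that are genuine algebraic numbers (involving $\sqrt 5$-type quantities once combined with the Fibonacci denominators), and one must confirm that the residue poles at $z=A,B,C,D$ are distinct enough for Proposition \ref{prop-residue} to apply, yet symmetric enough to collapse into just two quadratic factors. The cleanest route is to \emph{not} evaluate $a,b,c,d$ explicitly but instead track only the two symmetric combinations (sum and product of $\{a,c\}$, and of $\{b,d\}$) that control $1-t-t^2$ and $1-yt-y^2t^2$ respectively; verifying that these combinations produce exactly the stated rational function, and that the $z=0$ residue supplies the numerator $2(1+y)t(1+t)(1+yt)$, is the crux of the computation. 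I would finish by cross-checking the first few coefficients against $2F_{2n}$ at $y=1$ to confirm the normalization.
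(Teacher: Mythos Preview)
Your proposal is correct and follows the paper's approach: the paper states only that this corollary ``follows directly from Theorem~\ref{q0partfunc}'' (and remarks that an alternative inductive proof via catalytic variables is possible), so your plan to specialize Theorem~\ref{q0partfunc} and sum the resulting geometric series is exactly what is intended. One small sharpening: with $q=0$ and $\alpha=\gamma=1$ the relations \eqref{eqGPfor} give $a+c=1$, $ac=-1$ (and likewise $b+d=1$, $bd=-1$), so $a,c$ and $b,d$ are literally the roots of $x^2-x-1$, which makes the appearance of the Fibonacci denominators $(1-t-t^2)$ and $(1-yt-y^2t^2)$ immediate and confirms your instinct to work with the symmetric combinations rather than the explicit radicals.
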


\begin{example}\label{genfunexample1}
Set $\alpha=\beta=\gamma=\delta=1$ and $q=0$.  The polynomials
$Z_n(y)$ are
$$
\begin{aligned}
Z_1&=2(1+y)\\
Z_2&=6(1+y)^2\\ 
Z_3&=2(1+y)(8+15y+8y^2)\\
Z_4&=2 (1+y)^2 (21+34 y+21 y^2)\\
Z_5&=2 (1+y) (55+181 y+253 y^2+181 y^3+55 y^4)\\
\end{aligned}
$$
\end{example}
\begin{remark}
Corollaries \ref{genfun0}, \ref{genfun} and \ref{Fibonacci} can also be proved
by induction. We can write recurrences for $Z_{n,k,j}$ the number of tableaux
of size $n$  with $k$ rows indexed by $\alpha$ or $\gamma$, and $j$ entries equal to $\alpha$
or $\delta$ on the diagonal. From this we  write a
functional equation for $Z(w,t,y)=\sum_{n,k,j} Z_{n,k,j} w^n t^k y^j$. Finally
$t$ is used as a catalytic variable and we extract $Z(w,1,y)$.
\end{remark}

\section{Combinatorics of staircase tableaux}

The motivation for defining staircase tableaux in \cite{CW3, CW4}
was to give a combinatorial
formula for the stationary distribution of the ASEP with all parameters
$\alpha,\beta,\gamma,\delta,q$ general.  Such a formula had already 
been given in \cite{CW1} using permutation tableaux, when $\gamma=\delta=0$.
Therefore it follows that
the set of staircase tableaux containing only $\alpha$'s or $\beta$'s 
are in bijection with both the
permutation tableaux coming from Postnikov's work \cite{Postnikov, SW},
and the alternative tableaux introduced by Viennot \cite{Viennot}.
These bijections are explained in 
\cite[Section 9]{CW4}.
As a consequence, the staircase tableaux of size $n$
with only $\alpha$'s and $\beta$'s
are in bijection with permutations on 
$n+1$ letters \cite{SW, CN, Burstein}.
Moreover, one can interpret the parameter $q$ as counting the number of
{\it crossings} or the number of patterns $31-2$ in the permutation \cite{Corteel,SW}.

In this section we explore more of the combinatorial properties of 
staircase tableaux, and in particular, explain the formulas in Table 
\ref{partition-table}.

\subsection{Enumeration of staircase tableaux when $q=1$}

As before, we set $Z_n = \sum_{\T} \wt(\T)$,
where the sum is over all staircase tableaux of size $n$.
When $q=y=1$, the weighted sum of staircase tableaux of size $n$ 
factors as a product of $n$ terms.

\begin{theorem}  \label{partition}
When $q=y=1$,
\begin{equation*}
Z_n(1;\alpha,\beta,\gamma,\delta;1) = \prod_{j=0}^{n-1} (\alpha+\beta+\gamma+\delta+j(\alpha+\gamma)(\beta+\delta)).
\end{equation*}
\end{theorem}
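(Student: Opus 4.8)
The plan is to prove a stronger \emph{refined} identity by induction on $n$, peeling off the leftmost column. Throughout, since $q=y=u=1$, the weight $\wt(\T)$ is simply the product of the Greek labels appearing in $\T$. Write $A=\alpha+\gamma$ and $B=\beta+\delta$, so that the claimed $j$-th factor is $A+B+jAB$. For a staircase tableau $\T$ let $r(\T)$ be the number of rows of $\T$ containing no $\beta$ and no $\delta$, and set $G_n(x)=\sum_{\T}\wt(\T)\,x^{r(\T)}$, summed over all size-$n$ tableaux; then $Z_n=G_n(1)$. The goal is a recursion for $G_n$.

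First I would set up the decomposition. Deleting the leftmost column of a size-$n$ tableau $\T$ (it meets every row, and its bottom box is the diagonal box of the last row) leaves a size-$(n-1)$ staircase tableau $\T'$, and $\T$ is recovered from $\T'$ by adjoining a valid first column. The row rule shows that a first-column box in row $i<n$ may be labeled only if row $i$ of $\T'$ contains no $\beta$ or $\delta$, so the admissible rows are exactly the $r(\T')$ rows counted by the statistic; the bottom diagonal box must be labeled. The column rule (all boxes above an $\alpha$ or $\gamma$ are empty) forces the labeled boxes of the first column to be an optional topmost box carrying any of the four letters, with every labeled box below it carrying $\beta$ or $\delta$. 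One checks that adjoining such a column preserves all three defining conditions, giving a weight-preserving bijection $\T\leftrightarrow(\T',\text{first column})$.

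The key computation is then to sum, for a fixed $\T'$ with $s=r(\T')$, the quantity $\wt(\text{column})$ times $x$ raised to the first column's contribution to $r$. Splitting on the position and letter of the topmost labeled box and summing the geometric series $\sum_{j=0}^{s-1}x^{j}(x+B)^{s-1-j}=\tfrac{(x+B)^s-x^s}{B}$ collapses the total to $(Ax+B)(x+B)^{s}$. Since $s=r(\T')$, summing over $\T'$ gives the clean recursion $G_n(x)=(Ax+B)\,G_{n-1}(x+B)$ with $G_0(x)=1$. A one-line induction then yields $G_n(x)=\prod_{j=0}^{n-1}(Ax+B+jAB)$, and setting $x=1$ produces $Z_n=\prod_{j=0}^{n-1}(A+B+jAB)$, which is exactly the asserted product.

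The main obstacle — and the crux of the argument — is this middle step: recognizing that the total weight of admissible first columns depends on $\T'$ \emph{only} through $r(\T')$, and that this is precisely the statistic one must track for the recursion to close. A naive attempt to show $Z_n=(A+B+(n-1)AB)Z_{n-1}$ directly fails, because the first-column weight $(A+B)(1+B)^{r(\T')}$ genuinely varies with $\T'$; only after refining by $x^{r(\T)}$ and performing the geometric summation does the dependence reorganize into the substitution $x\mapsto x+B$. The other place requiring care is verifying the structural description of admissible first columns and that the deletion/adjoining operation respects the three tableau conditions in both directions.
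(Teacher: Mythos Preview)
Your argument is correct. The column-deletion bijection is set up properly, the description of admissible first columns is accurate (the column rule forces any $\alpha/\gamma$ in column $1$ to be the unique topmost labeled box, with all lower labeled boxes carrying $\beta/\delta$), and the geometric-series computation yielding $F(\T',x)=(Ax+B)(x+B)^{r(\T')}$ checks out. The recursion $G_n(x)=(Ax+B)\,G_{n-1}(x+B)$ then unwinds to the claimed product.

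Your route differs from the paper's. The paper first observes that at $q=1$ the roles of $\alpha,\gamma$ (respectively $\beta,\delta$) coincide, so $Z_n(1;\alpha,\beta,\gamma,\delta;1)=Z_n(1;\alpha+\gamma,\beta+\delta,0,0;1)$, and then invokes the known two-letter identity from \cite{CN}; it also supplies an independent proof via the \emph{staircase forest} structure, identifying tableaux with permutations through their cycle decomposition and using $\sum_{w\in\mathfrak S_n}x^{\kappa(w)}=x(x+1)\cdots(x+n-1)$. Your approach is instead a self-contained catalytic-variable induction: you track the auxiliary statistic $r(\T)$ so that the recursion closes, bypassing both the reduction to two letters and the forest/permutation bijection. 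In spirit this is close to the Corteel--Nadeau argument the paper cites, but carried out directly on four-letter staircase tableaux. What you gain is a short, elementary, reference-free proof; what the paper's forest approach offers in exchange is a bijective explanation of the product as a count of permutations weighted by cycle structure, which your method does not make visible.
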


\begin{proof}
When $q=y=1$, it's clear from the definition of staircase tableaux 
that $Z_n(1;\alpha,\beta,\gamma,\delta; 1) = 
Z_n(1;\alpha+\gamma, \beta+\delta,0,0;1).$ 
The result then follows from the fact that 
\begin{equation}\label{part-ab}
Z_n(1; \alpha, \beta, 0,0;1)=
\prod_{j=0}^{n-1}(\alpha+\beta+j \alpha \beta),
\end{equation}
which was proved combinatorially 
(using the language of permutation tableaux)
in \cite{CN}.  We will give another proof of Equation
\ref{part-ab} in Section \ref{trees}.
\end{proof}

\begin{remark}
Note that 
Theorem \ref{partition}
and Theorem \ref{NewThm} immediately imply a result of 
Uchiyama, Sasamoto, and Wadati \cite{USW},
which is that the partition function of the ASEP 
with $\alpha, \beta, \gamma, \delta$ general and $q=1$
is given by
\begin{equation*}
\prod_{j=0}^{n-1} (\alpha+\beta+\gamma+\delta+j(\alpha+\gamma)(\beta+\delta)).
\end{equation*}
They prove this result by noting that when $q=1$,
the partition function $Z_n$ of the ASEP is equal to the $n$th moment
of the Laguerre polynomials $L_n^{(\lambda)}(x)$ with 
$$\lambda = \frac{\alpha+\beta+\gamma+\delta}{(\alpha+\gamma)(\beta+\delta)} - 1,$$
defined by 
$L_0^{(\lambda)}(x)=1$, $L_{-1}^{(\lambda)}(x)=0$, and 
$$(n+1) L_{n+1}^{(\lambda)}(x) - (2n+\lambda+1-x) L_n^{(\lambda)}(x)
+(n+\lambda)L_{n-1}^{(\lambda)}(x)=0.$$ 
\end{remark}

\subsection{Staircase tableaux and trees}\label{trees}

We begin by describing a bijective approach to understanding 
staircase tableaux that uses an underlying forest structure
of the tableaux.

Let $D(\ct)$ be the diagram of a staircase tableau of $\alpha$'s and
$\beta$'s. Regard the entries $\alpha$ and $\beta$ as vertices of a
graph. For each nondiagonal vertex $v$, regard the two nearest
vertices directly to the right and directly below $v$ as the children
of $v$, called the \emph{row child} and \emph{column child},
respectively. In this way $D(\ct)$ becomes a \emph{complete rooted
  binary forest}, i.e., a forest for which every component is a rooted
tree, and every non-endpoint vertex has exactly two children. The
endpoints are just the diagonal vertices. We call such a forest a
\emph{staircase forest}. If the forest is a tree, then we call it a
\emph{staircase tree}. Figure~\ref{fig:t4} shows the six staircase
trees of size 4. Note that the children of any internal vertex $v$ of
a staircase forest have uniquely determined labels $\alpha$ or $\beta$,
viz., the child to the right of $v$ is labelled $\alpha$, while the
child below $v$ is labelled $\beta$. We can label each root either
$\alpha$ or $\beta$ without changing the property of being a staircase
tableau.

\begin{figure}
\centering
 \centerline{\includegraphics[width=10cm]{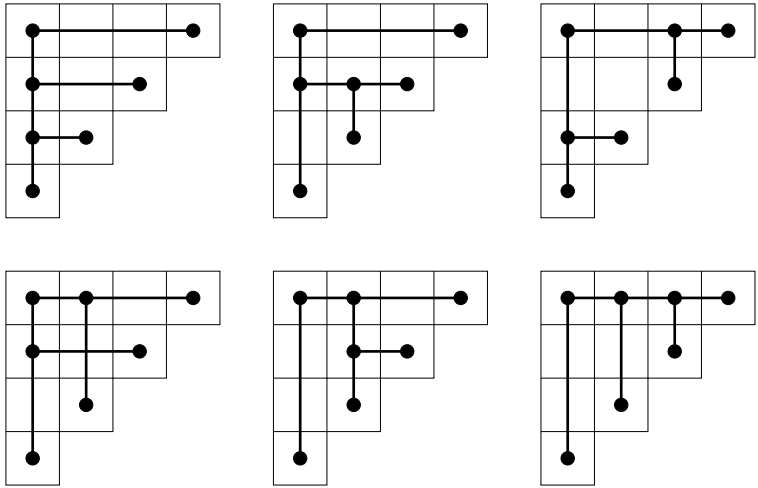}}
\caption{The staircase trees of size 4}
\label{fig:t4}
\end{figure}

The first step in enumerating $(\alpha,\beta)$-staircase tableaux by
this forest approach is the case where the forest is a tree. Let
$t(n)$ denote the number of staircase trees of size $n$.  The root $u$
must be in the upper-left corner (the $(1,1)$-entry).  Let
$v_1,\dots,v_n$ be the diagonal vertices, from top to bottom. The
row subtree of $u$ (i.e., the subtree whose root is the row child of
$u$) can have any nonempty subset $S$ of the diagonal
vertices as endpoints, except for the conditions $v_1\in S$ and
$v_n\not\in S$. If the row subtree has $i$ endpoints, then there are
$\binom{n-2}{i-1}$ ways to choose them, and then $t(i)$ ways to choose
the subtree itself.  Similarly there are $t(n-i)$ choices for the
column subtree. Hence
   \beq t(n) =\sum_{i=1}^{n-1}\binom{n-2}{i-1}t(i)t(n-i),
     \label{eq:tnrec} \eeq
with the initial condition $t(1)=1$. The solution to this recurrence is
clearly $t(n)=(n-1)!$, since the above sum will then have $n-1$ terms,
all equal to $(n-2)!$.

The formula $t(n)=(n-1)!$ shows that $t(n)$ is equal to the number of
$n$-cycles in the symmetric group $\sn$, while
equation~\eqref{eq:tnrec} shows that the number of staircase trees of
size $n$ whose row subtree has $i$ endpoints, $1\leq i\leq n-1$, is
$(n-2)!$, independent of $i$. From this observation it is
straightforward to give a bijection between staircase trees and
$n$-cycles. 

Alternatively, one can give an explicit bijection from staircase
trees to cycles as follows.
If $v_i$ is incident to a vertical edge, then
travel north from $v_i$ as far as possible along edges of the 
staircase tree, then
take a ``zig-zag''
path east-south,  traveling on edges of the tree
east and south and 
turning at each new vertex.  This path will
terminate at some vertex $v_j$ where $j<i$;
set $\pi(i) = j$.  Similarly, if $v_i$ is incident to a horizontal
edge, then 
travel west from $i$ as far as 
possible along edges of the tree, then 
take a zig-zag path south-east, traveling on edges of the tree south and east,
and turning at each new vertex.
This path will terminate at some $v_j$ for $j>i$; 
set $\pi(i) = j$.
The result will be a permutation $\pi$ which is a single cycle.
Using this bijection, the cycles associated to the first row
of Figure \ref{fig:t4} are, from left to right,
$(1234)$, $(1324)$, and $(1342)$, and the cycles associated to the 
second row are $(1243)$, $(1423)$, and $(1432)$.

Let us now consider staircase forests $F$. We obtain such a forest by
choosing a partition $\{B_1,\dots,B_k\}$ of the endpoints and then
for each block $B_i$ choosing a staircase tree whose endpoints are
$B_i$. Since a staircase tree with endpoints $B_i$ is equivalent to a
cycle on the elements of $B_i$, we are just choosing a permutation of
the endpoints. Hence there are $n!$ staircase forests of size $n$.

With little extra difficulty we can handle the labels $\alpha,
\beta$. Half the non-root vertices are row children, while half are
column children. If $F$ is a staircase forest and $T$ a component of
$F$ with $k$ endpoints, then $T$ has $k-1$ row children, all labelled
$\alpha$, and $k-1$ column children, all labelled $\beta$. The root is
labelled either $\alpha$ or $\beta$. Identifying the components of a
staircase forest with the cycles of a permutation shows that
  $$ \sum_{\ct}\wt(\ct) =\sum_{w\in\sn}\wt(w), $$
where the first sum ranges over all $(\alpha,\beta)$-staircase
tableaux of size $n$, while in the second sum we define
   $$ \wt(w) =\prod_C (\alpha+\beta)(\alpha\beta)^{\#C-1}, $$
where $C$ ranges over all cycles of $w$. For instance, if
$w=(1,3,6)(2,8)(4,9,7)(5)$ (disjoint cycle notation), then
  $$ \wt(w) = (\alpha+\beta)^4(\alpha\beta)^5. $$

A basic enumerative result on cycles of permutations states that if
$\kappa(w)$ denotes the number of cycles of $w$ then
  \beq F_n(x):= \sum_{w\in\sn} x^{\kappa(w)}=x(x+1)\cdots(x+n-1).
    \label{eq:fnx} \eeq
Hence
 \bea \sum_{\ct}\wt(\ct)  & = & \sum_{w\in \sn}
     (\alpha+\beta)^{\kappa(w)}(\alpha\beta)^{n-\kappa(w)}
     \nonumber\\
    & = & (\alpha\beta)^n F_n((\alpha+\beta)/\alpha\beta)
     \nonumber\\ & = &
    (\alpha+\beta)(\alpha+\beta+\alpha\beta)(\alpha+\beta+
    2\alpha\beta)\cdots(\alpha+\beta+(n-1)\alpha\beta).
    \label{eq:ctwt} \eea
Equation~\eqref{eq:fnx} has (at least) two bijective proofs
\cite[Prop.~1.3.4]{ec1}, so these two proofs carry over to bijective
proofs of equation~\eqref{eq:ctwt}. 

Note that Theorem \ref{partition} implies that there are
$4^n n!$ staircase tableaux of size $n$.  One can 
give a simple bijection ${\Phi}$ from staircase tableaux 
to {\it doubly signed permutations} -- that is, permutations
where each position is decorated by two signs.
The underlying permutation associated to a staircase tableaux
is just the permutation associated to its staircase forest
(after replacing $\gamma$'s and $\delta$'s with $\alpha$'s and 
$\beta$'s, respectively).  The first sign associated to 
position $i$ is $+$ if the $i$th diagonal box contains an 
$\alpha$ or $\delta$, and is $-$, otherwise.  
The second sign which we associate to position $i$
depends on the $i$th diagonal vertex
and either the topmost vertex in the $i$th column
or the leftmost vertex in the $i$th row.
More specifically, if the $i$th diagonal vertex is $\alpha$ or $\gamma$ and the leftmost
vertex of the $i$th row is $\alpha$ or $\delta$, we assign a $+$.
If the $i$th diagonal vertex is $\alpha$ or $\gamma$ and the leftmost
vertex of the $i$th row is $\beta$ or $\gamma$, we assign a $-$.
On the other hand, if the $i$th diagonal vertex is $\beta$ or $\delta$
and the topmost vertex of the $i$th column is $\alpha$ or $\beta$
we assign a $+$.
If the $i$th diagonal vertex is $\beta$ or $\delta$
and the topmost vertex of the $i$th column is $\gamma$ or $\delta$
we assign a $-$.

\begin{remark}
Philippe Nadeau exhibits a simple recursive structure for alternative tableaux
(staircase tableaux with no $\gamma$ and $\delta$) \cite{Nadeau}. 
The results of this section can be derived with Nadeau's techniques.
For example, Proposition 3.5 of \cite{Nadeau} implies that
there exist  $4^n n!$ staircase tableaux of size $n$.
James Merryfield also gave a bijective proof that the 
staircase tableaux have cardinality $4^n n!$ \cite{Merryfield}.
\end{remark}

As a slight variant of equation~\eqref{eq:fnx}, consider the problem
of counting the number $g(n)$ of $(\alpha,\beta,\gamma)$-staircase
tableaux of size $n$. Substituting $\alpha+\gamma$ for $\alpha$ and
setting $\alpha=\beta=\gamma=1$ (or just setting $\alpha=2$ and
$\beta=1$ in \eqref{eq:fnx}) gives
  $$ g(n) = 3\cdot 5\cdots (2n+1) = (2n+1)!!, $$
the number of complete matchings on a $(2n+2)$-element set. By the
interpretation in terms of cycles, we are counting permutations in
$\sn$ where the least element in each cycle in 3-colored and the
remaining elements are 2-colored. The third proof of
\cite[Prop.~1.3.4]{ec1} gives a bijective proof of this result by
first making three choices, then five choices, up to $(2n+1)$
choices. It is easy to encode these choices by a complete matching on
$[2n+2]$, thereby giving a bijection between
$(\alpha,\beta,\gamma)$-staircase tableaux and matchings.

\subsection{Enumeration of staircase tableaux of a given type}

Recall from equation (\ref{type}) that  $Z_{\sigma}(\alpha,\beta,\gamma,\delta;q)$
is the generating polynomial for the staircase tableaux of type $\sigma$; 
here $\sigma$ is a word
in $\{\bullet,\circ\}^n$.  By Theorem \ref{NewThm}, the steady 
state probability that the ASEP is at state $\sigma$ is proportional
to $Z_{\sigma}(\alpha,\beta,\gamma,\delta;q)$.  Therefore it is desirable to have explicit formulas
for $Z_{\sigma}(\alpha,\beta,\gamma,\delta;q)$.

We do not have an explicit formula for
$Z_{\sigma}(\alpha,\beta,\gamma, \delta;q)$ which works for arbitrary values
of the parameters.
However, a few special cases are known.  In particular, we will discuss
the following:
\begin{itemize}
\item an explicit formula for $Z_{\sigma}(1,1,1,1;1)$;
\item an explicit formula for $Z_{\sigma}(1,1,0,0;q)$;
\item relations satisfied by $Z_{\sigma}(\alpha,\beta,\gamma,\delta;q)$;
\item a recurrence for $Z_{\sigma}(\alpha,\beta,\gamma,0;q)$.
\end{itemize}

\subsubsection{An explicit formula for $Z_{\sigma}(1,1,1,1;1)$}
\begin{proposition}
For any word $\sigma$ in $\{\bullet,\circ\}^n$, 
$Z_{\sigma}(1,1,1,1; 1) = 2^n n!$.  In other words, there are $2^n n!$ staircase tableaux
of each type.
\end{proposition}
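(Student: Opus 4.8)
The plan is to reduce to the already-known total count of staircase tableaux and then show that this total is distributed equally among the $2^n$ types. First I would record that Theorem~\ref{partition}, specialized at $\alpha=\beta=\gamma=\delta=q=y=1$, gives the total number of staircase tableaux of size $n$ as $\prod_{j=0}^{n-1}(4+4j)=4^n n!$. Since the types $\sigma\in\{\circ,\bullet\}^n$ partition all staircase tableaux of size $n$, and since at this specialization every label and every blank box equals $1$ (so $\wt(\T)=1$ and $Z_\sigma(1,1,1,1;1)$ is literally the number of tableaux of type $\sigma$), it suffices to prove that this count is independent of $\sigma$; the common value is then $4^n n!/2^n=2^n n!$.

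The core of the argument is a type-flipping involution built on the following observation: the four labels split into two \emph{constraint classes}. The labels $\alpha$ and $\gamma$ both clear the column above them, while $\beta$ and $\delta$ both clear the row to their left. Hence the two labels within a class impose \emph{identical} emptiness conditions, so relabeling a box from one to the other within its class never affects whether a filling is a valid staircase tableau. Crucially, within each class the two labels carry \emph{opposite} type symbols: in $\{\alpha,\gamma\}$ the label $\alpha$ is $\bullet$ and $\gamma$ is $\circ$, while in $\{\beta,\delta\}$ the label $\delta$ is $\bullet$ and $\beta$ is $\circ$.

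Given these facts, for each $i$ with $1\le i\le n$ I would define the map that swaps the diagonal label in row $i$ to the other label of its class (that is, $\alpha\leftrightarrow\gamma$ or $\delta\leftrightarrow\beta$) and leaves every other box unchanged. Because a diagonal box of the staircase has no box to its right or below it, and because the swap preserves that box's constraint class, the image is again a valid staircase tableau, and the map is visibly an involution. By the parity observation above it toggles exactly the $i$th letter of $\type(\T)$ and fixes all others. Thus it is a bijection between staircase tableaux of type $\sigma$ and those of the type $\sigma'$ obtained from $\sigma$ by flipping its $i$th letter, so $Z_{\sigma}(1,1,1,1;1)=Z_{\sigma'}(1,1,1,1;1)$.

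Finally, since any two words in $\{\circ,\bullet\}^n$ are joined by a sequence of single-letter flips, all $2^n$ types share a common count, and combining this with the total $4^n n!$ yields $Z_\sigma(1,1,1,1;1)=2^n n!$. I expect the only point requiring care to be the verification that relabeling a single diagonal box genuinely preserves validity: one must check that the constraints imposed on other boxes (via the ``closest labeled box below'' and ``to the right'' relations) depend only on the constraint class of the relabeled box, not on its specific label. This is immediate from the definition once the class structure $\{\alpha,\gamma\}$ versus $\{\beta,\delta\}$ is made explicit, so the argument should go through cleanly.
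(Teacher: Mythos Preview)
Your proof is correct and follows essentially the same approach as the paper: the paper exhibits a direct bijection from tableaux of type $\sigma$ to tableaux of type $\bullet^n$ by simultaneously replacing each diagonal $\gamma$ by $\alpha$ and each diagonal $\beta$ by $\delta$, which is exactly your within-class swap applied at every $\circ$-position at once, and then divides the total $4^n n!$ by $2^n$. Your one-flip-at-a-time phrasing and the paper's all-at-once map rest on the identical observation that $\{\alpha,\gamma\}$ and $\{\beta,\delta\}$ impose the same emptiness constraints while carrying opposite type symbols.
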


\begin{proof}
This follows directly from the definition of staircase tableaux.
Fix an arbitrary word $\sigma$ in $\{\bullet,\circ\}^n$.
Let us show that the staircase tableaux of type $\sigma$ are 
in bijection with the staircase tableaux of type $\bullet^n$.
Given a tableau $T$ of type $\sigma$, we map it to a tableau
$g(T)$ of type $\bullet^n$, by replacing every diagonal box
which contains a $\beta$ with a $\delta$, and by replacing
every diagonal box which contains a $\gamma$ with an $\alpha$.
This is clearly a bijection.  Since there are $2^n$ words
of length $n$ in $\{\bullet,\circ\}^n$, and the total number of staircase
tableaux is $4^n n!$, there must be $2^n n!$ staircase tableaux of each
type in $\{\bullet,\circ\}^n$.
\end{proof}

\subsubsection{An explicit formula for $Z_{\sigma}(1,1,0,0;q)$}

There is also an explicit formula when $\gamma=\delta=0$, and 
$\alpha=\beta=1$, which was found in \cite{NTW} (though stated
in terms of permutation tableaux).
We first need to give a few definitions.

A {\it composition} of $n$ is  a list of positive integers which 
sum to $n$. If $I=(i_1,\dots,i_r)$ is a composition, let
$\ell(I)=r$ be its number of parts.  
The \emph{descent set} of $I$ is
$\Des(I) = \{ i_1,\ i_1+i_2, \ldots , i_1+\dots+i_{r-1}\}$.
We say that a composition $J$ is {\it weakly coarser} than $I$, 
denoted $J \preceq I$, 
if $J$ is obtained from $I$ by merging some parts of $I$.  For example,
the compositions which are (weakly) coarser than the composition $(3,4,1)$
are $(3,4,1)$, $(7,1)$, $(3,5)$, and $(8)$.

Given a word $\sigma$ in $\{\bullet,\circ\}^n$, we associate to it
a composition $I(\sigma)$ as follows.
Read $\sigma$ from right to left, and list the lengths of the 
consecutive blocks of $\circ$'s, between the right end of $\sigma$ 
and the rightmost $\bullet$, between two $\bullet$'s, and 
between the leftmost $\bullet$ and the left end of $\sigma$. 
This gives $I'(\sigma)$.  For example,
if $\sigma = \bullet \circ \circ \circ \bullet \circ \circ$ then 
$I'(\sigma) = (2,3,0)$.  Then 
we define $I(\sigma)$ by adding $1$ to each entry of $I'(\sigma)$.
So in this case, $I(\sigma) = (3,4,1)$.

We also define a relative of the $q$-factorial function: 
we define $\QStat$ as a function of any composition by 
\begin{equation*}
\QStat(j_1,\dots,j_p) := [p]_q^{j_1} [p-1]_q^{j_2} \dots [2]_q^{j_{p-1}}
[1]_q^{j_p}.
\end{equation*}
Here $[p]_q := 1+q+\dots+q^{p-1}$.

Finally, if $I\succeq J$, we define the statistic
$st(I,J)$ by
\begin{equation*}
st(I,J):=
\#\{(i,j)\in \Des(I)\times\Des(J) | i\leq j\}.
\end{equation*}

\begin{theorem}\cite[Theorem 4.2]{NTW}
Let $\sigma$ be any word in $\{\circ, \bullet\}^n$, and let 
$I:=I(\sigma)$ be the composition associated to $\sigma$. Then
\begin{equation*}
\label{eqptA}
Z_{\sigma}(1,1,0,0;q)
= \sum_{J \preceq I} (-1/q)^{l(I)-l(J)} q^{-st(I,J)} \QStat(J).
\end{equation*}
\end{theorem}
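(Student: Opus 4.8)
The plan is to read the right-hand side as a $q$-deformed inclusion--exclusion over the Boolean lattice of descent sets, and to supply the complementary summatory identity via the Matrix Ansatz. First I would invoke the refined form of the Matrix Ansatz from \cite{CW4} underlying Theorem~\ref{th:refine}, which gives $Z_\sigma(1,1,0,0;q)=\langle W|\,X_{\sigma_1}\cdots X_{\sigma_n}\,|V\rangle$ with $X_\bullet=D$ and $X_\circ=E$, where at $\alpha=\beta=1$, $\gamma=\delta=0$, $u=1$ the operators satisfy $DE=qED+D+E$ together with the boundary conditions $\langle W|E=\langle W|$ and $D|V\rangle=|V\rangle$. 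Because $\sigma\mapsto\Des(I(\sigma))$ is a bijection from $\{\circ,\bullet\}^n$ onto the subsets of $[n]$, the coarsening relation $J\preceq I$ is exactly containment of descent sets; hence the index set of the sum is a Boolean lattice, $\ell(I)-\ell(J)=|\Des(I)\setminus\Des(J)|$, and the sign $(-1)^{\ell(I)-\ell(J)}$ is precisely its M\"obius function. This already signals that the identity should be obtained by inverting a simpler ``descent set contained in'' relation.

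The crux is a summatory (zeta) identity asserting that the $q$-generating function for the relaxed condition factors as the product $\QStat$: concretely, that
\[
\QStat(I)=\sum_{J\preceq I} b_{I,J}(q)\,Z_{\sigma(J)}(1,1,0,0;q)
\]
for explicit coefficients $b_{I,J}(q)$ that are positive powers of $q$ (no signs). Combinatorially this says that the $q$-enumeration of the staircase tableaux whose diagonal type corresponds to a composition coarser than or equal to $I$ (equivalently, with descent set contained in $\Des(I)$)---which under the bijection of \cite{Corteel,SW} is a crossing-weighted sum over associated permutations of $\mathfrak{S}_{n+1}$---collapses to $[p]_q^{j_1}[p-1]_q^{j_2}\cdots[1]_q^{j_p}$ with $p=\ell(I)$. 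I would prove this directly from the Matrix Ansatz: relaxing the type amounts to summing the matrix element over the words obtained by turning prescribed $\bullet$'s into $\circ$'s, after which one uses $\langle W|E=\langle W|$ to absorb leading $E$'s and the relation $DE=qED+D+E$ to normal order. Each $E$ commuted past $k$ standing $D$'s before meeting $D|V\rangle=|V\rangle$ contributes a factor $1+q+\cdots+q^{k-1}=[k]_q$, and collecting these factors level by level over the $p$ blocks produces exactly $\QStat(I)$.

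With the zeta identity in hand, the theorem follows by $q$-M\"obius inversion over the Boolean lattice. The inversion reintroduces the sign $(-1)^{\ell(I)-\ell(J)}$ from the M\"obius function, while the $q$-coefficients $b_{I,J}(q)$ invert to the displayed weight $(-1/q)^{\ell(I)-\ell(J)}q^{-st(I,J)}$: each descent removed in passing from $I$ to $J$ supplies a factor $-q^{-1}$, and the coupling recorded by $st(I,J)=\#\{(i,j)\in\Des(I)\times\Des(J):i\le j\}$ accounts for the remaining $q$-powers. I would confirm the final bookkeeping on the boundary specializations $q=1$ (where $\QStat$ degenerates to a factorial-type product and the sum telescopes) and $q=0$.

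The main obstacle is the zeta identity together with the exact tracking of $q$-exponents through the inversion: one must verify that the normal-ordering bookkeeping yields precisely the statistic $st(I,J)$---rather than some other count of pairs of descents---and that the coupling through the condition $i\le j$, which prevents the Boolean inversion from factoring naively coordinate by coordinate, is nonetheless triangular enough to invert cleanly. If the operator computation becomes unwieldy, I would replace the normal-ordering step by the equivalent weighted-lattice-path model of \cite{Fla82,Vie88} used elsewhere in this paper: there the returns of the path to height zero give the block decomposition of the word, the factors $[k]_q$ arise as geometric sums over level steps at height $k$, and the inclusion--exclusion over coarsenings becomes a decomposition of the path set over its distinguished return points.
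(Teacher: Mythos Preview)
This theorem is not proved in the present paper; it is quoted verbatim from \cite{NTW} (Theorem~4.2 there, stated in the language of permutation tableaux), and the paper only illustrates it with an example. So there is no proof here against which to compare your proposal.

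As for the outline itself: the reduction to M\"obius inversion over the Boolean lattice of coarsenings is correct, and your observation that $\ell(I)-\ell(J)$ supplies the M\"obius sign is fine. But the entire content of the theorem sits in the ``zeta identity''
\[
\QStat(I)=\sum_{J\preceq I} b_{I,J}(q)\,Z_{\sigma(J)}(1,1,0,0;q),
\]
and you neither specify the $b_{I,J}(q)$ nor prove the identity. You assert that normal ordering via $DE=qED+D+E$ with $\langle W|E=\langle W|$, $D|V\rangle=|V\rangle$ will produce factors $[k]_q$ that assemble into $\QStat(I)$, and that the inversion will then hand back exactly $q^{-st(I,J)}$. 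Both claims are plausible, but the passage from ``commute one $E$ past $k$ $D$'s'' to ``the weighted sum over all coarsenings equals $\QStat(I)$, with monomial weights whose inverse is $(-1/q)^{\ell(I)-\ell(J)}q^{-st(I,J)}$'' is precisely the computation that has to be done, and you have not done it. You flag this yourself as ``the main obstacle,'' and it is: as written, your proposal restates the theorem in inverted form and names the tools you hope will establish it, but does not establish it. If you pursue this route, the concrete task is to write down $b_{I,J}(q)$ explicitly (for instance as $q$ raised to a count of pairs in $\Des(I)\times\Des(J)$), verify the zeta identity by a direct Matrix-Ansatz or lattice-path computation, and then check by hand that the triangular matrix $(b_{I,J})$ inverts to $((-1/q)^{\ell(I)-\ell(J)}q^{-st(I,J)})$.
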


\begin{example}
When $\sigma = \bullet \circ \circ \circ \bullet \circ \circ$,
we have $I:=I(\sigma) = (3,4,1)$.
The compositions coarser than $I$ are
$(3,4,1)$, $(7,1)$, $(3,5)$, and $(8)$, so
we get
\begin{align*}
Z_{\sigma}(1,1,0,0;q) 
&= q^0 q^{-3} [3]_q^3 [2]_q^4 [1]_q^1
                                - q^{-1} q^{-2} [2]_q^7 [1]_q^1
                                - q^{-1} q^{-1} [2]_q^3 [1]_q^5
                                + q^{-2} q^0 [1]_q^8\\
&=
q^7  + 7 q^6  + 24 q^5  + 52 q^4  + 76 q^3  + 75 q^2  + 47 q + 15.
\end{align*}
This is the generating polynomial for staircase tableaux of 
type $\bullet \circ \circ \circ \bullet \circ \circ$ when
$\alpha=\beta=1$ and $\gamma=\delta=0$.
\end{example}

\subsubsection{Relations satisfied by $Z_{\sigma}(\alpha,\beta,\gamma,\delta;q)$}\label{relations}
In this subsection we will use $Z_{\sigma}$ as shorthand for  
$Z_{\sigma}(\alpha,\beta,\gamma,\delta;q)$.
We will recall here some relations satisfied by $Z_{\sigma}$ that were proved
in \cite{CW3, CW4}.
Given any word $\sigma$ in the alphabet
$\{\bullet,\circ\}$, let $\ell(\sigma)$ denote the length of $\sigma$.  

\begin{theorem}\cite{CW3, CW4}\label{CWTh}
Let $\alpha,\beta,\gamma,\delta, q$ be arbitrary parameters, and 
let $\lambda_n$ be defined by 
$\lambda_n = \alpha \beta -\gamma \delta q^{n-1}$ for $n\geq 1$.  
Let $\sigma_1, \sigma_2, \sigma$ be arbitrary words in the alphabet
$\{\bullet,\circ\}$.  Then we have the following relations among
the $Z_{\sigma}=Z_{\sigma}(\alpha,\beta,\gamma,\delta;q)$.
\begin{align}
Z_{\sigma_1 \bullet \circ \sigma_2} - qZ_{\sigma_1 \circ \bullet \sigma_2}
&= \lambda_{\ell(\sigma_1)+\ell(\sigma_2)+2} (Z_{\sigma_1 \bullet \sigma_2} +
Z_{\sigma_1 \circ \sigma_2}).\\
\alpha Z_{\circ \sigma} - \gamma Z_{\bullet \sigma} &=\lambda_{\ell(\sigma)+1} Z_{\sigma}.\\
\beta Z_{\sigma \bullet} - \delta Z_{\sigma \circ} &= \lambda_{\ell(\sigma)+1} Z_{\sigma}.
\end{align}
\end{theorem}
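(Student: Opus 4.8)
The plan is to base everything on a single transfer-matrix description of the $Z_\sigma$ and then to pin down the level-dependent factor $\lambda_n=\alpha\beta-\gamma\delta q^{n-1}$, which is the one genuinely delicate ingredient. By the type-refined form of the matrix ansatz of \cite{CW4} that underlies Theorem~\ref{th:refine}, I would write $Z_\sigma=\llangle W|X_{\sigma_1}\cdots X_{\sigma_n}|V\rrangle$ with $X_\bullet=D$ and $X_\circ=E$. The three identities then become three insertion statements: the first says $\llangle W|S_1(DE-qED)S_2|V\rrangle=\lambda_N\,\llangle W|S_1(D+E)S_2|V\rrangle$ (with $S_1,S_2$ the products coming from $\sigma_1,\sigma_2$ and $N=\ell(\sigma_1)+\ell(\sigma_2)+2$), while the second and third say $\llangle W|(\alpha E-\gamma D)S|V\rrangle=\lambda_{\ell(\sigma)+1}\llangle W|S|V\rrangle$ and $\llangle W|S(\beta D-\delta E)|V\rrangle=\lambda_{\ell(\sigma)+1}\llangle W|S|V\rrangle$. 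Since a naive constant-coefficient matrix identity cannot produce the power $q^{N-1}$, the level dependence must be tracked through the position at which the insertion occurs; for this reason I would prove the two boundary relations combinatorially, where the origin of $\lambda_n$ is visible.

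For the second identity I would delete the top row of a size-$n$ tableau of type $\circ\sigma$ or $\bullet\sigma$. Because the top row lies above every other row, deleting it leaves unchanged both the nearest labeled box below and the nearest labeled box to the right of every surviving box, so $\wt$ factors as the weight of the remaining size-$(n-1)$ tableau (which has type $\sigma$) times the weight of the deleted row. Holding the remaining tableau $T'$ fixed and summing the top-row weight over all its legal fillings, let $R_\circ(T')$ and $R_\bullet(T')$ be the two totals according to whether the diagonal box of the deleted row is a $\beta$ or $\gamma$, or an $\alpha$ or $\delta$. The whole identity then reduces to the pointwise claim $\alpha R_\circ(T')-\gamma R_\bullet(T')=\lambda_n$, independent of $T'$; multiplying by $\wt(T')$ and summing yields $\lambda_n Z_\sigma$. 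The third identity is the mirror image, deleting the leftmost column instead: that column is never to the right of, nor in the same column as (hence never below), any surviving box, so the same factorization applies.

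The crux, and the step I expect to be the real fight, is the pointwise evaluation $\alpha R_\circ(T')-\gamma R_\bullet(T')=\alpha\beta-\gamma\delta q^{n-1}$. The deleted row has $n-1$ cells to the left of its diagonal box, each blank or newly labeled, so $R_\circ$ and $R_\bullet$ are nontrivial sums that a priori depend on $T'$ through which columns force an empty cell above them; the content of the lemma is that this dependence cancels between the two terms, leaving only the two extreme configurations: a diagonal $\beta$ forces all $n-1$ left cells blank, each seeing a $\beta$ and hence receiving a $u$, giving the term $\alpha\beta$, while a diagonal $\delta$ forces them blank too, each now seeing a $\delta$ and receiving a $q$, giving $-\gamma\delta q^{n-1}$. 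Getting both the cancellation and the exponent of $q$ exactly right is the whole problem, and it is precisely what certifies the modified ansatz with this $\lambda_n$ rather than a constant. For the bulk relation, where no single row peels off, I would instead use a weight-preserving involution that exchanges the two adjacent diagonal boxes and applies the same local row/column weight analysis to the two cells involved, or equivalently verify the sandwiched exchange $\llangle W|S_1(DE-qED)S_2|V\rrangle=\lambda_N\llangle W|S_1(D+E)S_2|V\rrangle$ directly from the entries of $D$ and $E$ at the relevant level.
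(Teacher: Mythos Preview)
The paper does not prove Theorem~\ref{CWTh} in its own right; it cites \cite{CW3,CW4} and notes that the proof there ``used a complicated induction and was not very combinatorial.''  So there is no paper proof to compare against, only the remark that the cited argument is inductive rather than bijective.  Your proposal is an attempt at something different, and part of it works, but there is a genuine gap.

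Your argument for the second identity is correct and clean.  Deleting the top row, the pointwise claim $\alpha R_\circ(T')-\gamma R_\bullet(T')=\lambda_n$ holds because the diagonal labels $\alpha$ and $\gamma$ are interchangeable both for the row constraints and for the weight rules (a blank box ``seeing $\alpha$ or $\gamma$ to its right'' is a single case), so the $\gamma$-part of $R_\circ$ and the $\alpha$-part of $R_\bullet$ cancel exactly; what survives are the diagonal-$\beta$ and diagonal-$\delta$ contributions, which are forced to have an all-blank row and give $\beta\cdot u^{n-1}$ and $\delta\cdot q^{n-1}$ respectively.

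However, your claim that the third identity is the ``mirror image'' is wrong, and this is where the proposal fails.  The weight rules in Definition~\ref{weight} are \emph{not} symmetric under transposition: the label to the right is always consulted first, and the label below is consulted only when the right label is $\alpha$ or $\gamma$.  Concretely, for the leftmost-column argument you would need the diagonal labels $\beta$ and $\delta$ at $(n,1)$ to be interchangeable for the weight of the blank boxes above them, but they are not: a blank box seeing $\alpha$ or $\gamma$ to its right and $\beta$ below gets $q$, while the same box with $\delta$ below gets $u$.  Hence the $\delta$-part of $C_\bullet$ and the $\beta$-part of $C_\circ$ do \emph{not} cancel pointwise.  For instance, at $n=2$ with $T'$ consisting of a single $\alpha$ at $(1,2)$, one computes $\beta C_\bullet-\delta C_\circ=\alpha\beta+\beta\delta(1-q)-\gamma\delta q$, which is not $\lambda_2=\alpha\beta-\gamma\delta q$ unless $\delta=0$ or $q=1$.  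The identity still holds after summing over $T'$, but your pointwise mechanism does not prove it.

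The bulk relation is also not established: the ``weight-preserving involution that exchanges the two adjacent diagonal boxes'' is only sketched, and the paper's own discussion (Theorem~\ref{explicit-d} and the remark preceding it) makes clear that even in the easier case $\delta=0$ this requires a careful five-case analysis; your alternative of checking the sandwiched matrix identity ``from the entries of $D$ and $E$'' is essentially the cited inductive proof, not an independent argument.  So as it stands the proposal proves one of the three relations and does not close the other two.
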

Note that the proof of Theorem \ref{CWTh} in \cite{CW3,CW4} used
a complicated induction and was not very combinatorial.

\subsubsection{A recurrence for $Z_{\sigma}(\alpha,\beta,\gamma,0;q)$.}\label{rec}

However, when $\delta=0$, Theorem \ref{CWTh} simplifies, and we can 
give a purely combinatorial proof using staircase tableaux.
Throughout this section our staircase tableaux will be assumed to have no 
$\delta$'s, and 
we will abbreviate 
$Z_{\sigma}(\alpha,\beta,\gamma,0;q)$ by $Z_{\sigma}$.

\begin{theorem}\label{explicit-d}
Let $\sigma, \sigma_1, \sigma_2$ be arbitrary words in the alphabet
$\{\bullet,\circ\}$.  Then we have the following.
\begin{align} \label{first}
Z_{\sigma_1 \bullet \circ \sigma_2} &= 
qZ_{\sigma_1 \circ \bullet \sigma_2} +
\alpha \beta (Z_{\sigma_1 \bullet \sigma_2}+Z_{\sigma_1 \circ \sigma_2}),\\
\label{second} \alpha Z_{\circ \sigma} &= \gamma Z_{\bullet \sigma} + \alpha \beta Z_{\sigma},\\
\label{third} Z_{\sigma \bullet} &= \alpha  Z_{\sigma}.
\end{align}
\end{theorem}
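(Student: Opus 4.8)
The plan is to prove all three identities by weight-preserving bijections read directly off the combinatorial definition of staircase tableaux, exploiting the fact that a label in a corner forces an entire row or column to be empty and that, since $\delta=0$, every forced-empty cell in such a region gets weight $u=1$. For \eqref{third} the trailing $\bullet$ means the southwest corner box carries an $\alpha$; by the column rule every box above it in the leftmost column is empty, and each such box sees that $\alpha$ immediately below and hence gets weight $u=1$. Deleting the leftmost column therefore removes exactly the factor $\alpha$ together with a string of $1$'s and leaves a staircase tableau one size smaller whose diagonal, hence type, is $\sigma$. This is a bijection (to invert, reattach a leftmost column with an $\alpha$ at the bottom and blanks above), proving $Z_{\sigma\bullet}=\alpha Z_{\sigma}$.

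For \eqref{second} I would split the type-$\circ\sigma$ tableaux according to whether the northeast corner box is $\beta$ or $\gamma$. That box is alone in its column, so $\alpha$ and $\gamma$ behave identically for every other cell (both lie in $\{\alpha,\gamma\}$ as seen from the left, and neither constrains its own row); relabeling the corner $\alpha\leftrightarrow\gamma$ is thus a weight-preserving bijection between the corner-$\gamma$ tableaux and the type-$\bullet\sigma$ tableaux, giving $\alpha$ times the corner-$\gamma$ part equal to $\gamma Z_{\bullet\sigma}$. If instead the corner is $\beta$, the row rule forces the whole top row to its left to be empty, and each such box sees the $\beta$ to its right and so gets weight $u=1$; stripping the top row removes the factor $\beta$ and leaves a type-$\sigma$ tableau, so the corner-$\beta$ part equals $\beta Z_{\sigma}$. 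Adding the two pieces gives \eqref{second}.

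For \eqref{first} I would first locate the two diagonal boxes marked by the $\bullet$ and the $\circ$; they form a staircase ``step,'' with a pivot box $P$ lying directly left of the upper box and directly above the lower box (the fourth corner of the step does not exist). In a type-$\sigma_1\bullet\circ\sigma_2$ tableau the upper box is $\alpha$ and the lower box is $\beta$ or $\gamma$, and when $P$ is blank it always gets weight $q$ (it sees an $\alpha$ to its right and a $\beta$ or $\gamma$ below). The natural split is on whether $P$ is filled. When $P$ is filled, which forces the lower box to be $\beta$, I would contract the step by deleting the row of the lower box and the column of the upper box: apart from the two diagonal boxes themselves these two lines consist entirely of forced-empty cells of weight $u=1$, the remaining shape is a staircase one size smaller, and $P$ becomes its diagonal box at the marked position. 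This is a weight-preserving bijection extracting the factor $\alpha\beta$, with the label of $P$ deciding whether the resulting type is $\sigma_1\bullet\sigma_2$ or $\sigma_1\circ\sigma_2$; summing over $P\in\{\alpha,\beta,\gamma\}$ produces exactly $\alpha\beta(Z_{\sigma_1\bullet\sigma_2}+Z_{\sigma_1\circ\sigma_2})$.

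The configurations with $P$ blank must account for $qZ_{\sigma_1\circ\bullet\sigma_2}$, and this is where I expect the real work to lie. The intended move is to swap the two diagonal labels, turning $\bullet\circ$ into $\circ\bullet$, so that $P$, now sitting above an $\alpha$, changes its weight from $q$ to $u=1$; the single explicit factor $q$ is meant to record precisely this change. The obstacle is that the swap also changes the box below the step from $\{\beta,\gamma\}$ to $\alpha$, which flips the $q/u$-weight of \emph{every} blank cell in the step's column whose nearest labeled box below is that diagonal box, so a plain relabeling is not weight-preserving. The bijection must therefore simultaneously relocate the labeled cells of that column into the neighboring column (which the swap frees up) and rebalance these contextual weights, and verifying that such a redistribution is both weight-preserving and bijective — tracking the $q/u$ bookkeeping along the affected row and column at once — is the crux of the argument, and is exactly the step that setting $\delta=0$ is meant to make tractable.
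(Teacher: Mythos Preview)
Your proofs of \eqref{third} and \eqref{second}, and your treatment of the $P$-filled case of \eqref{first}, are exactly the paper's argument.

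For the $P$-blank case of \eqref{first} you have located the difficulty precisely but stopped just short of the resolution the paper uses, and you are anticipating more bookkeeping than is actually needed. The paper's bijection is simply: swap the contents of the two adjacent columns above $P$ and $\alpha$, \emph{and} swap the contents of the two adjacent rows to the left of $P$ and $\beta/\gamma$; the step itself then has $\alpha$ and $\beta/\gamma$ interchanged and $P$ remains blank with its weight dropping from $q$ to $u=1$. You hinted at moving column contents into the freed neighboring column, which is half of this; the symmetric row swap is the other half. The point is that in each of these two adjacent pairs one member is forced empty (the column above $\alpha$; the row left of $\beta$, or the column above $\gamma$), so the swap merely shifts each labeled cell by one position without changing which label any other blank cell sees to its right or below. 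Hence there is no residual ``rebalancing'' of $q/u$ weights to perform: once both swaps are made, weight preservation and bijectivity are immediate.
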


\begin{proof}
This recurrence is best explained with pictures.
We begin with equations (\ref{third}) and (\ref{second}), which are
easiest to prove.
To prove (\ref{third}), it suffices to note that any staircase
tableau (with no $\delta$'s) whose type ends with $\bullet$ must
have an $\alpha$ in its lower left square.  The weight of such a 
tableau is $\alpha$ times the weight of the tableau obtained from it
by deleting the leftmost column.  See the left part of Figure 
\ref{recurrence1}.  Equation (\ref{third}) follows.

\begin{figure}[h]
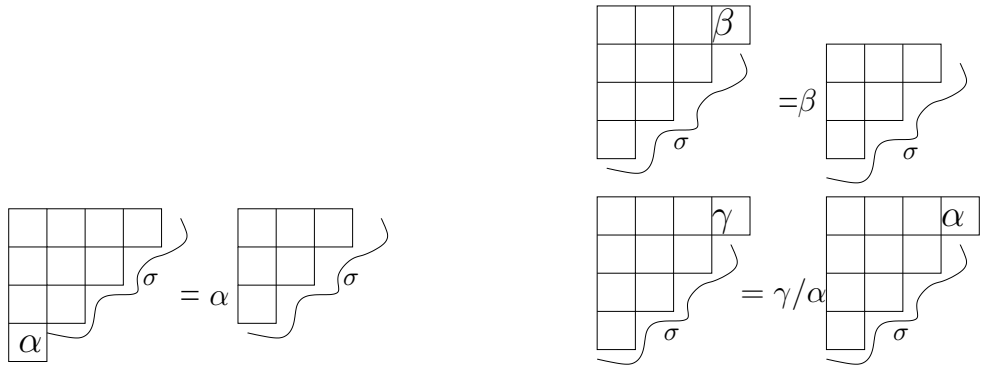

\input{Recurrence1.pstex_t} \hspace{6em} \input{Recurrence2.pstex_t}
\caption{The left and right parts of the picture prove equations
(\ref{third}) and (\ref{second})}
\label{recurrence1}
\end{figure}

To prove (\ref{second}) note that any staircase tableau
whose type begins with $\circ$ must have a $\beta$ or a $\gamma$
in its upper right square.  If it has a $\beta$ there, then the weight
of that tableau is equal to $\beta$ times the weight of the tableau
obtained by deleting the topmost row.  Alternatively, if it has
a $\gamma$ there, then its weight is equal to $\frac{\gamma}{\alpha}$
times the weight of the tableau obtained from it by replacing the 
$\gamma$ with an $\alpha$.  See the left right of Figure 
\ref{recurrence1}.

\begin{figure}[h]
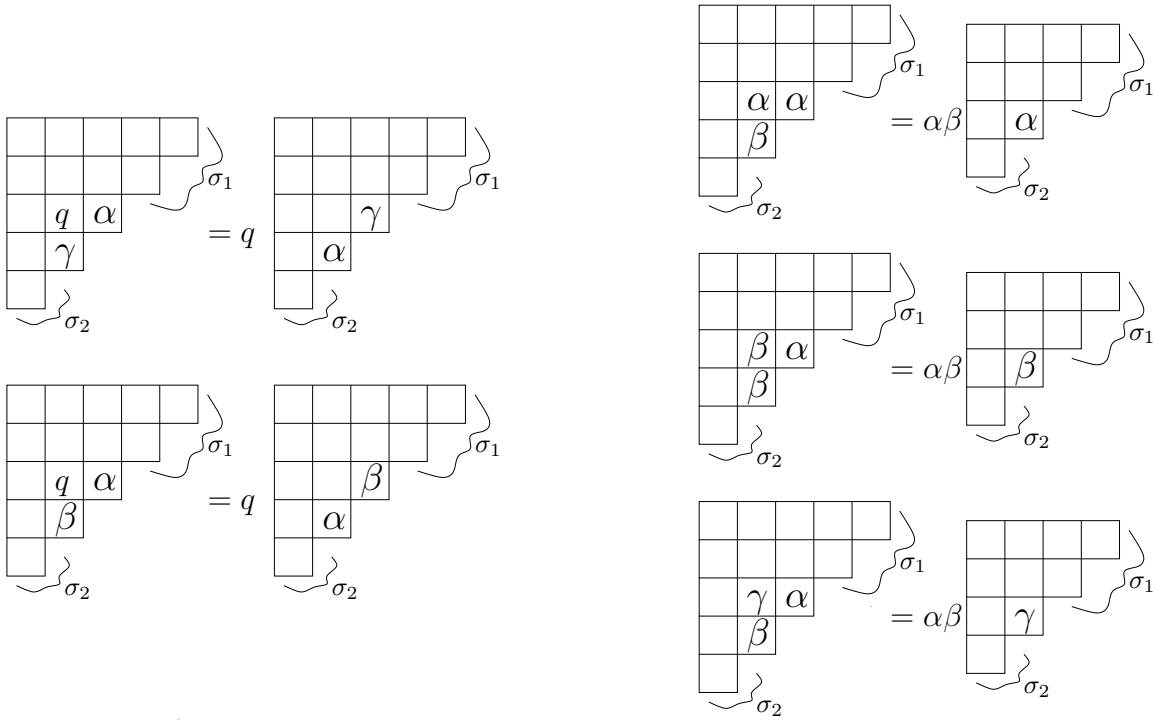

\input{Recurrence3.pstex_t} \hspace{6em} \input{Recurrence4.pstex_t}
\caption{This picture proves equation (\ref{first}).}
\label{recurrence2}
\end{figure}

To prove (\ref{first}), note that if a staircase tableau has type
$\sigma_1 \bullet \circ \sigma_2$, then its two diagonal boxes
corresponding to the $\bullet \circ$ must be either
$\alpha \gamma$ or $\alpha \beta$.  If the two boxes are
$\alpha \gamma$, then the box above the $\gamma$ and left of the 
$\alpha$ will get filled with $q$.  If the two boxes
are $\alpha \beta$, then the box above the $\beta$ and left of the
$\alpha$ may be filled with either a $q, \alpha, \beta$ or $\gamma$.  These five
possibilities are shown in Figure \ref{recurrence2}.

As shown in the left of Figure \ref{recurrence2}, if that third box
is a $q$, then the generating polynomial for 
such staircase tableaux of type $\sigma_1 \bullet \circ \sigma_2$
is equal to $q$ times the generating polynomial for staircase tableau
of type $\sigma_1 \circ \bullet \sigma_2$.  One can prove this bijectively
by taking the two columns above the $q$ and $\alpha$ and swapping them;
and by taking the two rows left of the $q$ and $\gamma$ (respectively
$q$ and $\beta$) and swapping them.  (The box filled with the $q$ will become
a box filled with $u=1$.)
 
On the other hand, if the two diagonal boxes are $\alpha$ and $\beta$,
and the box above the $\beta$ and left of the $\alpha$ is filled with 
either $\alpha, \beta,$ or $\gamma$, then the weight of this tableau
is equal to $\alpha \beta$ times the weight of the tableau obtained
by deleting the column with the $\alpha$ in the diagonal box,
and deleting the row with the $\beta$ in the diagonal box.
This completes the proof of (\ref{first}).
\end{proof}

\subsection{More factorizations of the partition function}

Theorem \ref{CWTh} is very useful for proving various factorizations
of the partition function.

\begin{proposition}\label{d-b}
$$Z_n(1;\alpha,\beta,\gamma,-\beta;q) = \prod_{j=0}^{n-1} (\alpha+ q^j \gamma).$$
\end{proposition}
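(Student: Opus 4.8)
The plan is to derive a first-order recurrence in $n$ directly from the boundary relation in Theorem \ref{CWTh}, exploiting the fact that the specialization $\delta=-\beta$ makes that relation symmetric in the two possible final diagonal letters. Since we are setting $y=1$, the quantity in question is simply the ordinary partition function $Z_n = \sum_{\sigma\in\{\bullet,\circ\}^n} Z_\sigma$, where throughout we abbreviate $Z_\sigma = Z_\sigma(\alpha,\beta,\gamma,-\beta;q)$. So it suffices to sum the third relation $\beta Z_{\sigma \bullet} - \delta Z_{\sigma \circ} = \lambda_{\ell(\sigma)+1} Z_{\sigma}$ over all words $\sigma$ of length $n-1$.

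The key computation is the following. Under $\delta=-\beta$ one has
$$\lambda_n = \alpha\beta - \gamma\delta q^{n-1} = \beta(\alpha+\gamma q^{n-1}),$$
and the third relation of Theorem \ref{CWTh} becomes $\beta(Z_{\sigma\bullet}+Z_{\sigma\circ}) = \lambda_{\ell(\sigma)+1}Z_\sigma$. Now sum over all $\sigma\in\{\bullet,\circ\}^{n-1}$. On the left, every word $\tau\in\{\bullet,\circ\}^n$ arises exactly once as $\sigma\bullet$ or $\sigma\circ$ (strip the last letter to recover $\sigma$), so $\sum_\sigma (Z_{\sigma\bullet}+Z_{\sigma\circ}) = Z_n$. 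On the right, $\ell(\sigma)+1=n$ is constant, so the factor $\lambda_n$ pulls out of the sum and $\sum_\sigma Z_\sigma = Z_{n-1}$. This yields $\beta Z_n = \beta(\alpha+\gamma q^{n-1})Z_{n-1}$, hence
$$Z_n = (\alpha+\gamma q^{n-1})\,Z_{n-1}.$$

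With the base case $Z_0=1$ (the empty tableau), induction on $n$ immediately gives $Z_n = \prod_{j=0}^{n-1}(\alpha+\gamma q^j)$, as claimed. I do not anticipate a genuine obstacle here: once Theorem \ref{CWTh} is available, the only things to get right are the bookkeeping that summing the boundary relation over all types telescopes the left-hand side into $Z_n$, and the observation that $\delta=-\beta$ is precisely the value that both symmetrizes the relation (turning $\beta Z_{\sigma\bullet}-\delta Z_{\sigma\circ}$ into $\beta(Z_{\sigma\bullet}+Z_{\sigma\circ})$) and factors $\lambda_n$ as $\beta(\alpha+\gamma q^{n-1})$, so that the two factors of $\beta$ cancel. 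The division by $\beta$ can be read as a polynomial identity, so no nonvanishing assumption on $\beta$ is needed.
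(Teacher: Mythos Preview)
Your proof is correct and follows essentially the same approach as the paper: specialize $\delta=-\beta$ in the third relation of Theorem~\ref{CWTh} to obtain $Z_{\sigma\bullet}+Z_{\sigma\circ}=(\alpha+\gamma q^{\ell(\sigma)})Z_\sigma$, sum over all words of a given length, and induct. Your bookkeeping with the indices is in fact slightly cleaner than the paper's write-up.
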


\begin{proof}
We use Theorem \ref{CWTh}, with $\delta=-\beta$.  In this case
we have 
$\lambda_n = \beta (\alpha+q^n \gamma)$, and so 
$Z_{\sigma \bullet} + Z_{\sigma \circ} = (\alpha+q^{\ell(\sigma)+1} \gamma) Z_{\sigma}.$
We now use induction on $n$.  Note that 
$Z_n = \sum_{\sigma} Z_{\sigma}$, where the sum is over all words
$\sigma \in \{\bullet,\circ \}^n$.  
Then $Z_{n+1} = \sum_{\sigma} (Z_{\sigma \bullet} + Z_{\sigma \circ}) = 
\sum_{\sigma} (\alpha+q^n \gamma) Z_{\sigma} = (\alpha+q^n \gamma) Z_n$.  
The results now follows by induction.
\end{proof}

\begin{proposition}\label{y-1}
$$Z_n(-1;\alpha,\beta,\gamma,\beta;q)=(-1)^n \prod_{j=0}^{n-1} (\alpha- q^j \gamma).$$
\end{proposition}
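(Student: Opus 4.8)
The plan is to imitate the inductive argument of Proposition \ref{d-b}, but now carrying the fugacity $y=-1$ through the recursion. The starting point is the type decomposition of the fugacity partition function: grouping staircase tableaux of size $n$ by their type $\sigma$, and writing $b(\sigma)$ for the number of $\bullet$'s in $\sigma$, one has
$$Z_n(y;\alpha,\beta,\gamma,\delta;q)=\sum_{\sigma\in\{\bullet,\circ\}^n} y^{b(\sigma)}\,Z_\sigma(\alpha,\beta,\gamma,\delta;q),$$
which is immediate from the definitions of $t(\T)$ and of $Z_\sigma$. Specializing $y=-1$ and $\delta=\beta$, the goal is to derive a one-step recurrence relating $Z_{n+1}(-1;\alpha,\beta,\gamma,\beta;q)$ to $Z_n(-1;\alpha,\beta,\gamma,\beta;q)$.

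First I would split each word $\tau$ of length $n+1$ uniquely as $\sigma\bullet$ or $\sigma\circ$ with $\ell(\sigma)=n$, noting that $b(\sigma\bullet)=b(\sigma)+1$ while $b(\sigma\circ)=b(\sigma)$. At $y=-1$ this gives
$$Z_{n+1}(-1;\alpha,\beta,\gamma,\beta;q)=\sum_{\sigma\in\{\bullet,\circ\}^n}(-1)^{b(\sigma)}\bigl(Z_{\sigma\circ}-Z_{\sigma\bullet}\bigr).$$
The crucial input is then the third relation of Theorem \ref{CWTh}, namely $\beta Z_{\sigma\bullet}-\delta Z_{\sigma\circ}=\lambda_{\ell(\sigma)+1}Z_\sigma$. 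Setting $\delta=\beta$ and using $\lambda_{n+1}=\alpha\beta-\gamma\beta q^{n}=\beta(\alpha-q^n\gamma)$, I would obtain $Z_{\sigma\bullet}-Z_{\sigma\circ}=(\alpha-q^n\gamma)Z_\sigma$, hence $Z_{\sigma\circ}-Z_{\sigma\bullet}=-(\alpha-q^n\gamma)Z_\sigma$. Since the factor $-(\alpha-q^n\gamma)$ is independent of $\sigma$, it pulls out of the sum to yield
$$Z_{n+1}(-1;\alpha,\beta,\gamma,\beta;q)=-(\alpha-q^n\gamma)\,Z_n(-1;\alpha,\beta,\gamma,\beta;q).$$
With the base case $Z_0=1$, induction on $n$ then gives $Z_n(-1;\alpha,\beta,\gamma,\beta;q)=\prod_{j=0}^{n-1}\bigl(-(\alpha-q^j\gamma)\bigr)=(-1)^n\prod_{j=0}^{n-1}(\alpha-q^j\gamma)$, as claimed.

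The structure is essentially forced, so there is little genuine difficulty; the one place to be careful, and the main (admittedly minor) obstacle, is the sign bookkeeping. Unlike Proposition \ref{d-b}, where the choice $\delta=-\beta$ made $Z_{\sigma\bullet}+Z_{\sigma\circ}$ the natural combination, here the fugacity $y=-1$ converts the sum over appended letters into the \emph{difference} $Z_{\sigma\circ}-Z_{\sigma\bullet}$, and it is precisely this difference that the third relation of Theorem \ref{CWTh} controls when $\delta=\beta$. One must also track the index shift in $\lambda$ (so that $\lambda_{\ell(\sigma)+1}=\beta(\alpha-q^{\ell(\sigma)}\gamma)$) in order to land on the product $\prod_{j=0}^{n-1}(\alpha-q^j\gamma)$ rather than a shifted version. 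Alternatively, the identity could be read off by combining Corollary \ref{spec:cor} at $y=-1$ with Theorem \ref{moments2}, but the direct inductive route above is cleaner and self-contained.
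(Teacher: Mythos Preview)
Your proof is correct and follows exactly the approach the paper intends: the paper leaves this as an exercise ``analogous to the proof of Proposition~\ref{d-b},'' and that is precisely what you do---use the third relation of Theorem~\ref{CWTh} (with $\delta=\beta$ so that $\lambda_{n+1}=\beta(\alpha-q^n\gamma)$) to derive the one-step recurrence $Z_{n+1}=-(\alpha-q^n\gamma)Z_n$, then induct. The only additional ingredient beyond Proposition~\ref{d-b} is the sign bookkeeping from the fugacity $y=-1$, which you handle correctly.
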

\begin{proof}
Exercise.  The proof is analogous to the proof
of Proposition \ref{d-b}.
\end{proof}

\begin{proposition}\label{0}
$$Z_n(y; \alpha,\alpha,\alpha,\alpha;-1) =0 \text{ for }n\geq 3.$$ 
\end{proposition}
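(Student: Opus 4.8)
The plan is to work entirely within the framework of Theorem~\ref{CWTh}, specialized to $\alpha=\beta=\gamma=\delta$ and $q=-1$, and to prove the stronger statement that \emph{every} $Z_\sigma(\alpha,\alpha,\alpha,\alpha;-1)$ with $|\sigma|\ge 3$ vanishes; the Proposition then follows at once from $Z_n(y)=\sum_\sigma y^{t(\sigma)}Z_\sigma$. Under this specialization $\lambda_n=\alpha\beta-\gamma\delta q^{n-1}$ becomes $\lambda_n=\alpha^2\bigl(1-(-1)^{n-1}\bigr)$, so $\lambda_n=0$ for $n$ odd and $\lambda_n=2\alpha^2$ for $n$ even, and this parity dichotomy drives the whole argument. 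Writing $Z_\sigma$ for $Z_\sigma(\alpha,\alpha,\alpha,\alpha;-1)$, the three relations of Theorem~\ref{CWTh} read $\alpha(Z_{\circ\sigma}-Z_{\bullet\sigma})=\lambda_{\ell(\sigma)+1}Z_\sigma$, $\alpha(Z_{\sigma\bullet}-Z_{\sigma\circ})=\lambda_{\ell(\sigma)+1}Z_\sigma$, and $Z_{\sigma_1\bullet\circ\sigma_2}+Z_{\sigma_1\circ\bullet\sigma_2}=\lambda_m(Z_{\sigma_1\bullet\sigma_2}+Z_{\sigma_1\circ\sigma_2})$ with $m$ the total length.

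I would then induct on $n=|\sigma|$. The key encoding is $Z_\sigma=(-1)^{\mathrm{inv}(\sigma)}C_{t(\sigma)}$, where $\mathrm{inv}(\sigma)$ counts the pairs $(\bullet,\circ)$ with the $\bullet$ to the left; this is forced by the third relation once it is homogeneous, since swapping an adjacent $\bullet\circ$ to $\circ\bullet$ negates $Z_\sigma$ and changes $\mathrm{inv}$ by one. When $n$ is even, $\lambda_n=2\alpha^2$, but the right-hand sides above involve only words of length $n-1\ge 3$, which vanish by induction; hence $Z_{\bullet\rho}=Z_{\circ\rho}$, $Z_{\rho\bullet}=Z_{\rho\circ}$, and $Z_{\sigma_1\bullet\circ\sigma_2}=-Z_{\sigma_1\circ\bullet\sigma_2}$, all homogeneous. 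Appending then forces $C_{j+1}=(-1)^jC_j$ (a trailing $\circ$ adds $j$ inversions, a trailing $\bullet$ adds none), while prepending forces $C_{j+1}=(-1)^{\,n-1-j}C_j$ (a leading $\bullet$ adds $n-1-j$ inversions); for $n$ even these exponents have opposite parity, so $C_j=0$ for all $j$ and every length-$n$ word vanishes.

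The odd case is where the genuine content lies. For $n$ odd all three relations become homogeneous ($\lambda_n=0$), so they determine the length-$n$ values only up to a single scalar: one gets $C_{j+1}=(-1)^jC_j$, and summing against the $q=-1$ Gaussian binomial $\binom{n}{k}_{-1}=\binom{(n-1)/2}{\lfloor k/2\rfloor}$ yields the closed form $Z_n(y)=Z_{\circ^n}\,(1+y)(1-y^2)^{(n-1)/2}$. The relations of Theorem~\ref{CWTh} are provably \emph{insufficient} to pin down the remaining seed $Z_{\circ^n}$: because $\lambda_n=0$, the length-$(n-1)$ data decouples from length $n$, leaving $Z_{\circ^n}$ free at each odd length (indeed $Z_{\circ^1}=2\alpha\neq 0$, while $Z_2(y)=4\alpha^3(1+y)^2\neq 0$, so the statement genuinely starts at $n=3$). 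The main obstacle is therefore to establish, by external input, that $Z_{\circ^n}=0$ for all odd $n\ge 3$.

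To overcome this I would pass to generic $q$, where the same relations now determine every $Z_\sigma(\alpha,\alpha,\alpha,\alpha;q)$ from the seeds $Z_\emptyset=1$, $Z_\bullet=Z_\circ=2\alpha$, and show that $Z_{\circ^n}(\alpha,\alpha,\alpha,\alpha;q)$ carries a factor $(1+q)$ once $n\ge 3$, so that it vanishes at $q=-1$. The base case is a finite computation: solving the length-$3$ system for generic $q$ gives each length-$3$ weight a factor $(1+q)$ (for instance $Z_{\bullet\bullet\circ}=(q+1)\,2\alpha^3\bigl(2q^2+\alpha(5q+1)+4\alpha^2\bigr)$, whence $Z_{\circ\circ\circ}$ and all others vanish at $q=-1$), and I expect the delicate part to be propagating this divisibility upward through the recursion: the even step preserves it cleanly (the factor $(1-q^{n-1})$ is a unit modulo $1+q$ when $n$ is even), while the odd step again stalls at the seed, so the cleanest route may instead be to read $Z_{\circ^n}$ off the explicit formula of Corollary~\ref{spec:cor}/Theorem~\ref{explicit-Z}, where at this locus one has $ac=bd=-1$ and $abcd=1$ from the inversion formulas~\eqref{eqGPfor} and the prefactor $\prod_{j=0}^{n-1}(\alpha\beta-\gamma\delta q^j)$ acquires its vanishing. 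Alternatively, a sign-reversing involution on size-$n$ staircase tableaux that preserves the number of labels and the type but flips the parity of the number of $q$-filled boxes would give a purely bijective proof; constructing such an involution, and explaining why it exists only for $n\ge 3$, is the most attractive but also the hardest way to close the argument.
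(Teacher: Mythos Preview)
Your approach via the relations of Theorem~\ref{CWTh} is genuinely different from the paper's, and the reductions you carry out are correct: the encoding $Z_\sigma=(-1)^{\mathrm{inv}(\sigma)}C_{t(\sigma)}$ is forced by the homogeneous swap relation, and for even $n\ge 4$ the two boundary relations do yield the contradictory recursions $C_{j+1}=(-1)^jC_j$ and $C_{j+1}=-(-1)^jC_j$, killing everything once the odd step below is known. You have also correctly diagnosed the obstruction: for every odd $n$ the relations are homogeneous and leave the seed $Z_{\circ^n}$ undetermined, so Theorem~\ref{CWTh} alone cannot close the argument.

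The gap is in how you propose to close it. Your remark that ``the prefactor $\prod_{j=0}^{n-1}(\alpha\beta-\gamma\delta q^j)$ acquires its vanishing'' is not a proof: in Corollary~\ref{spec:cor} that product sits in the \emph{denominator} of the moment, and in Theorem~\ref{explicit-Z} the corresponding factor $(abcd;q)_n$ (here $abcd=1$, so it vanishes for $n\ge1$) is cancelled termwise by the $(abcd;q)_k$ in the denominator of the $k$-sum, producing $0/0$. The correct interpretation of that ratio is $(abcd\,q^k;q)_{n-k}=(q^k;q)_{n-k}$, and at $q=-1$ this factor is zero only when $k=0,\ n\ge1$ or $k=1,\ n\ge 3$; for $k\ge 2$ and $k$ close to $n$ it is nonzero, so the prefactor alone does \emph{not} force $Z_n=0$.

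The paper's proof is precisely the missing computation you gesture at: it uses Theorem~\ref{explicit-Z} directly and observes, by splitting the inner $j$-sum into even and odd $j$, that this sum vanishes identically for every $k\ge 2$ at the locus $\alpha=\beta=\gamma=\delta$, $q=-1$ (so $a^2/y$, $ac/y$, $bd\,y$ all become $-1$). Only $k=0$ and $k=1$ survive, and those two terms are killed by the factor $(q^k;-1)_{n-k}$ for $n\ge 3$. This is short and self-contained; your route, by contrast, proves the finer statement that each individual $Z_\sigma$ vanishes, but must in the end invoke the same explicit formula (or an as-yet-unconstructed involution) to supply the odd-length seeds.
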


\begin{proof}
We use Theorem \ref{explicit-Z}.
Our choice of specialization makes the sum over $j$ equal to $0$
unless $k=0$ or $k=1$.  (To see this, 
split the sum over $j$ into sums over even and odd $j$, and
consider even and odd $k$.)
The $k$-sum has the term
$(q^k;q)_{n-k}$, which is zero for $q=-1$ when
either $k=0$ and $n>0$, or 
$k=1$ and $n>2$.
So if $n>2$ we get $0$.
\end{proof}

\begin{remark} 
Note that one can generalize the preceeding proposition
and prove that $Z_n(y; \alpha,\alpha,\alpha,\alpha;q) =0 $ for  $n> 2m$ 
and $q^m=-1$.
\end{remark}

\subsection{Enumeration of staircase tableaux when $\beta=1$ and $\delta=0$}

As we've seen in Section \ref{rec}, the combinatorics of staircase
tableaux becomes a bit simpler when $\delta=0$.  In this section
we explore the combinatorics when in addition we impose $q=1$.
By Theorem \ref{partition}, the generating polynomial for 
staircase tableaux with no $\delta$'s is 
$$
\prod_{j=0}^{n-1} (\alpha+\beta+\gamma+j\beta(\alpha+\gamma)).
$$
Therefore the number of staircase tableaux of size with no $\delta$ is $(2n+1)!!=(2n+1)\cdot(2n-1)\cdot\ldots \cdot 3\cdot 1$.  Since 
$(2n+1)!!$ is the number of perfect matchings of the set 
$\{1,2,\dots,2n+2\}$, this implies the following.

\begin{corollary}
There exists a bijection between the staircase tableaux
of size $n$ with no $\delta$
and the perfect matchings of $\{1,2,\ldots ,2n+2\}$.
\end{corollary}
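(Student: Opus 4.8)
The plan is to produce the bijection as an explicit composite of maps already available from Section~\ref{trees}, rather than merely invoking the equality of cardinalities. First I would note that a staircase tableau of size $n$ with no $\delta$ is exactly an $(\alpha,\beta,\gamma)$-staircase tableau, so by the forest description of Section~\ref{trees} it is encoded by a staircase forest on the $n$ diagonal endpoints, with the forced labels ($\alpha$ on each row child, $\beta$ on each column child) and a free label at each root; the role of $\gamma$ is, as explained there, the substitution $\alpha\mapsto\alpha+\gamma$, i.e.\ each cycle-minimum receives one of three colors and every other element one of two. Identifying the components of the forest with the cycles of a permutation $w\in\sn$ then exhibits these tableaux as colored permutations, counted by $\sum_{w}3^{\kappa(w)}2^{n-\kappa(w)}=3\cdot5\cdots(2n+1)=(2n+1)!!$, in agreement with Theorem~\ref{partition}.

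Next I would turn this weighted count into an honest bijection by presenting both $(2n+1)!!$-element sets as sequences of independent choices. On the tableau side I build the colored permutation by inserting the elements $1,2,\dots,n$ in increasing order (Foata-style): at the step that introduces $j$, either $j$ opens a new cycle, and is then the minimum of that cycle and carries one of three colors, or $j$ is placed immediately after one of the $j-1$ earlier elements and carries one of two colors; this gives exactly $2j+1$ options at step $j$, hence nontrivial option counts $3,5,\dots,2n+1$. On the matching side I use the classical recursion: match the largest unmatched element of $\{1,\dots,2n+2\}$ to one of the remaining elements and delete both, giving option counts $2n+1,2n-1,\dots,3,1$; discarding the final, forced step (a unique choice), the nontrivial option counts form the same multiset $\{3,5,\dots,2n+1\}$. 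I would then fix an order-reversing correspondence between the two sequences of steps, so that the tableau-step with $2j+1$ options is paired with the matching-step having exactly $2j+1$ options, and define the bijection by feeding the tableau's choice-sequence through this correspondence to produce a matching.

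Finally I would verify invertibility step by step: each individual choice on either side is free, in the sense that every listed option is attainable and that two objects differing in at least one choice produce distinct outputs, so the composite is injective; since both sets have cardinality $(2n+1)!!$, it is a bijection. The main obstacle is purely bookkeeping: making the order-reversing alignment of the two choice-sequences precise and confirming that each decoding step is well-defined independently of the later ones. This is exactly the encoding indicated in Section~\ref{trees}, in the variant of equation~\eqref{eq:fnx} used to count $(\alpha,\beta,\gamma)$-staircase tableaux, where it was observed that the choices ``$3,5,\dots,2n+1$'' are readily recorded by a complete matching on $[2n+2]$; carrying out that alignment rigorously is all that remains.
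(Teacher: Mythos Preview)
Your proposal is correct and follows essentially the same approach as the paper. The paper's own argument is the sketch at the end of Section~\ref{trees}: it too passes through the forest/cycle description to identify $(\alpha,\beta,\gamma)$-tableaux with permutations in $\sn$ whose cycle-minima are $3$-colored and whose remaining elements are $2$-colored, then invokes the insertion proof of equation~\eqref{eq:fnx} (the ``third proof'' of \cite[Prop.~1.3.4]{ec1}) to produce the sequence of $3,5,\dots,2n+1$ choices, and finally records this choice-sequence as a complete matching on $[2n+2]$; you have simply spelled out the same composite bijection in more detail.
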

We gave the sketch of a bijective proof of this result in Section \ref{trees}.
Now let us study the combinatorics of those tableaux with no $\delta$
in the case $\beta=1$.  Let
$Z_n(\alpha,\gamma;q)=Z_n(1;\alpha,1,\gamma,0;q)$.

\begin{proposition}
The generating polynomial 
$Z_n(\alpha,\gamma;q)$ of staircase tableaux of size $n$ 
is equal to the generating polynomial
of weighted Dyck paths of length $2n+2$ where the North-East
steps get weight 1 and the South East steps starting at height $i$
have weight 
\begin{itemize}
\item $(\alpha+\gamma q^i)[i+1]_q$ if $k=2i+2$
\item $q^i+(\alpha+\gamma q^i)[i]_q$ if $k=2i+1$.
\end{itemize}
\label{gp}
\end{proposition}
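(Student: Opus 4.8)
\emph{Strategy.} The plan is to realize $Z_n(\alpha,\gamma;q)=Z_n(1;\alpha,1,\gamma,0;q)$ as an (appropriately normalized) moment of an explicit family of orthogonal polynomials, and then convert the resulting Motzkin-path expansion into the Dyck-path expansion of the statement by the classical even-part (contraction) transformation of orthogonal polynomials. First I would invoke Theorem \ref{th:refine} at $y=1$ to write $Z_n=\llangle W\vert (D+E)^n\vert V\rrangle$, and then use Corollary \ref{Dennis} (equivalently Corollary \ref{spec:cor}) to read off the three-term recurrence governing $Z_n$ from \eqref{AWeq} under the specialization $\beta=1,\ \delta=0$. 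The point of this specialization is that \eqref{eqGPfor} collapses to $b=0$, $d=-q$, $ac=-\gamma/\alpha$, $a+c=(1-q-\alpha+\gamma)/\alpha$, so that $abcd=0$, the Pochhammer prefactor $(abcd;q)_n$ becomes $1$, and the recurrence coefficients simplify dramatically; I would compute them in closed form, absorbing the remaining scalar $(\alpha\beta)^n=\alpha^n$ into a rescaling of the spectral variable.

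\emph{The bridge.} Next I would recall (Theorem \ref{moment-motzkin}) that the weighted Dyck paths in the statement are exactly the Flajolet--Viennot model for the even moments $\nu_{2m}$ of the \emph{symmetric} orthogonal polynomials $\{p_k\}$ defined by $xp_k=p_{k+1}+a_kp_{k-1}$, where $a_k$ is the weight of a South-East step starting at height $k$: since the diagonal coefficients vanish there are no level steps, the odd moments vanish, and $\nu_{2m}$ is the sum over Dyck paths of length $2m$ of the products of down-step weights. Thus it suffices to prove $Z_n=\nu_{2n+2}$. The bridge is the standard quadratic (contraction) transformation: the symmetric family with down-weights $a_k$ has even moments equal to the moments of the contracted (Motzkin) family with level weights $B_j=a_{2j}+a_{2j+1}$ and down weights $\Lambda_j=a_{2j-1}a_{2j}$ (with the convention $a_0=0$). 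Inverting this relation, I would solve the triangular system $B_j=a_{2j}+a_{2j+1}$, $\Lambda_j=a_{2j-1}a_{2j}$ recursively ($a_1=B_0$, $a_2=\Lambda_1/a_1$, $a_3=B_1-a_2,\dots$) and check that the solution is precisely $a_{2j}=(\alpha+\gamma q^{j-1})[j]_q$ and $a_{2j+1}=q^{j}+(\alpha+\gamma q^{j})[j]_q$, which are exactly the two cases $k=2i+2$ and $k=2i+1$ of the stated weights.

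\emph{Main obstacle.} The genuinely delicate point is the index and normalization bookkeeping: the Dyck paths have length $2n+2$ rather than $2n$, so $Z_n$ must match $\nu_{2n+2}=\mu'_{n+1}$, the \emph{$(n+1)$-st} contracted moment, not the $n$th. This shift by one is the path-model shadow of the two extra points in the bijection between staircase tableaux with no $\delta$ and perfect matchings of $\{1,\dots,2n+2\}$ recorded just above. I would pin it down by comparing generating functions: writing $F(t)=\sum_{m\ge 0}\mu'_m t^m$ as the Jacobi continued fraction with coefficients $B_j,\Lambda_j$, the claim $Z_n=\mu'_{n+1}$ reads $\sum_n Z_n t^n=(F(t)-1)/t$, and I would verify that the boundary vectors $\llangle W\vert$ and $\vert V\rrangle$ of the $(D+E)$-model, together with the $\alpha^n$ rescaling, produce exactly this shifted linear form rather than the naive $e_0$-to-$e_0$ matrix element. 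The low-order checks $Z_0=1$ (the single length-$2$ path, weight $q^0+(\alpha+\gamma)[0]_q=1$) and $Z_1=1+\alpha+\gamma$ (the two length-$4$ paths, with weights $1$ and $(\alpha+\gamma)[1]_q$) confirm both the weights and the shift, and would serve as the sanity anchor for the general computation.
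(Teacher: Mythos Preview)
Your overall plan coincides with the paper's: invoke Corollary~\ref{Dennis} at $y=1$, $\beta=1$, $\delta=0$ to realize $Z_n(\alpha,\gamma;q)$ as the $n$th moment of an explicit Jacobi family $G$, and then pass to a symmetric (Dyck) family by the quadratic transformation of orthogonal polynomials. The difficulty you flag---the extra two steps, i.e.\ length $2n+2$ rather than $2n$---is real, but your proposed resolution is where the argument breaks.

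Corollary~\ref{Dennis} already delivers $Z_n$ as a scalar times the genuine $n$th moment of $G$, that is, the $e_0$--to--$e_0$ matrix element; nothing in the boundary vectors $\langle W|$, $|V\rangle$ or in the $\alpha^n$ rescaling introduces a shift, so you will not be able to ``verify that the boundary vectors \dots\ produce exactly this shifted linear form.'' Concretely, your even-part contraction with $a_0=0$ forces $a_1=B_0$, and the $B_0$ you actually obtain from Corollary~\ref{Dennis} is $b_0^G=1+\alpha+\gamma$, not $1$; this already contradicts the target value $a_1=q^0+(\alpha+\gamma)[0]_q=1$. If you decontract $G$ via the even-part formulas you will get a perfectly good symmetric family $H'$ with $Z_n=\nu_{2n}^{H'}$, i.e.\ Dyck paths of length $2n$ with \emph{different} step weights---true, but not the proposition.

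The paper sidesteps this by using the \emph{odd}-part contraction (Chihara, p.~46): starting from $G$ one builds the symmetric family $H$ via $H_{2m+1}(x)=xG_m(x^2)$, with $\Lambda_1=1$, $\Lambda_{2m+2}=b_m^G-\Lambda_{2m+1}$, $\Lambda_{2m+1}=\lambda_m^G/\Lambda_{2m}$. For this contraction the moment identity is $\mu_n^G=\mu_{2n+2}^H/\Lambda_1=\mu_{2n+2}^H$, which is exactly the shift you need, for free and with no appeal to boundary vectors. Running this recursion with the paper's $b_m^G=[m{+}1]_q(\alpha+\gamma q^m)+(\alpha+\gamma q^m)[m]_q+q^m$ and $\lambda_m^G=[m]_q(\alpha+\gamma q^{m-1})((\alpha+\gamma q^m)[m]_q+q^m)$ yields precisely $\Lambda_{2i+2}=(\alpha+\gamma q^{i})[i{+}1]_q$ and $\Lambda_{2i+1}=q^i+(\alpha+\gamma q^i)[i]_q$, the stated Dyck weights. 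So the fix is simply to replace your even contraction $B_j=a_{2j}+a_{2j+1}$, $\Lambda_j=a_{2j-1}a_{2j}$ by the odd one $b_m=a_{2m+1}+a_{2m+2}$, $\lambda_m=a_{2m}a_{2m+1}$ with the initial condition $a_1=1$; then your triangular inversion goes through verbatim and matches the paper's computation.
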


\begin{proof}
From Corollary \ref{Dennis}, we know that $Z_n$ with $\delta=0$
is a factor times the moments of the
orthogonal polynomials with 
$$
b_n=\frac{2+q^n((a+b+c)+(1-q^{n-1}(1+q))abc)}{1-q}$$ and
$$\lambda_n=\frac{(1-q^n)(1-q^{n-1}ab)(1-q^{n-1}ac)(1-q^{n-1}bc)}{(1-q)^2}.$$
If $\beta=1$, then $b=-q$. We get that
$Z_n$ is exactly equal to the moments of the polynomials with
$$b_n= [n+1]_q(\alpha+\gamma q^{n})+(\alpha+\gamma q^n)
[n]_q+q^n,\ \ {\rm and}\
\lambda_n=[n]_q(\alpha+\gamma q^{n-1})((\alpha+\gamma q^n)
[n]_q+q^n).$$ 
Now we use a result on page 46 of \cite{Chihara}
which says that  if $G_n(x)$ are
orthogonal polynomials with $b_n$ and $\lambda_n$ arbitrary,
then $H_{2n+1}(x)=xG_n(x^2)$
are orthogonal polynomials with 
$B_n=0$ and 
$\Lambda_{1}=1$, $\Lambda_{2n+2}=b_n-\Lambda_{2n+1}$
and $\Lambda_{2n+1}=\lambda_n/\Lambda_{2n}$. We get that
he generating polynomial of staircase tableaux of size $n$ $Z_n(\alpha,\gamma;q)$
is equal to the moments $\mu_{2n+2}$ of the orthogonals polynomials defined
by
$$
xH_n(x)=H_{n+1}(x)+\Lambda_n H_{n-1}(x)
$$ with
$\Lambda_{2n}=(\alpha+\gamma q^{n-1})[n]_q$ and
$\Lambda_{2n+1}=(\alpha+\gamma q^{n})[n]_q+q^n$. As Dyck paths are Motzkin paths
with no east steps, the proposition follows using Theorem \ref{moment-motzkin}.
\end{proof}

\noindent {\bf Remark.} When $q=\alpha=\gamma=1$, it is well known that these paths are in bijection
with perfect matchings of $\{1,2,\ldots ,2n+2\}$ \cite{FFV}. If the South East 
steps starting at height $i$
had weight $[i]_q$, these paths would correspond to moments
of the classical $q$-Hermite polynomials or perfect matchings counted
by crossings (see \cite{P} and references wherein). \\

We can now give a combinatorial interpretation of the preceeding proposition.
A matching of $\{1,\ldots ,2n+2\}$ is a sequence of $n+1$ mutually disjoint
edges $(i,j)$ with $1\le i<j\le 2n+2$.
Given an edge $e=(i,j)$, let $cross(e)$ be the number of edges $(\ell,k)$
such that $i<\ell<j<k$ and $nest(e)$  be the number of edges $(\ell,k)$
such that $\ell<i<j<k$. We define the f-crossing of an edge $e$
to be equal to $cross(e)$ if $nest(e)>0$ and $\lfloor cross(e)/2\rfloor$ otherwise.
We said that an edge is nested (resp. crossed) if $cross(e)<nest(e)$
(resp. if $cross(e)>nest(e)$).

\begin{theorem}
There exists a bijection between staircase tableaux of size $n$ with 
$j$ entries equal to $q$, $k$ entries equal to $\alpha$ and $\ell$ entries
equal to $\gamma$ and 
matchings of $\{1,\ldots ,2n+2\}$ where $j$ is the number of
f-crossings, $k$ is the number of nested edges and $\ell$ the number of crossed edges.
\end{theorem}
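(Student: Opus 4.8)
The plan is to realize the polynomial $Z_n(\alpha,\gamma;q)$ simultaneously as a generating function over staircase tableaux and over matchings, using the weighted Dyck paths of Proposition~\ref{gp} as the bridge. By definition $Z_n(\alpha,\gamma;q)=Z_n(1;\alpha,1,\gamma,0;q)=\sum_T \alpha^{\#\alpha(T)}\gamma^{\#\gamma(T)}q^{\#q(T)}$, summed over staircase tableaux $T$ of size $n$ with $\beta=1$ and no $\delta$; here $u=1$, so the $\beta$-boxes and $u$-boxes contribute nothing and the weight is exactly this monomial. Hence it suffices to produce a weight-preserving bijection between the weighted Dyck paths of length $2n+2$ in Proposition~\ref{gp} and matchings of $\{1,\dots,2n+2\}$, under which the product of down-step weights becomes $\alpha^{\#\text{nested}}\gamma^{\#\text{crossed}}q^{\sum \text{f-cross}}$. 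Showing that these two trivariate generating functions coincide proves the theorem, since two finite sets with equal graded cardinalities admit a bijection.

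First I would use the standard encoding of a matching as a decorated Dyck path: reading positions $1,\dots,2n+2$ from left to right, record an up-step at each opener and a down-step at each closer, so that the height $h$ just before a down-step equals the number of currently open arcs; I then decorate that down-step by recording \emph{which} of the $h$ open arcs is being closed. This is a bijection between matchings of $[2n+2]$ and Dyck paths in which every down-step from height $h$ carries a label in $\{1,\dots,h\}$, namely the rank (by opening time) of the closed arc. The essential numerical coincidence is that the down-step weight from height $h$ in Proposition~\ref{gp} expands into exactly $h$ terms, so choosing a term is the same data as choosing a label.

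The heart of the argument is a local computation at one closer. If the closed arc $e$ has rank $p$ among the $h$ open arcs ordered from oldest to youngest, then every older open arc nests over $e$ while every younger one crosses $e$, giving $\operatorname{nest}(e)=p-1$ and $\operatorname{cross}(e)=h-p$ (so $\operatorname{cross}(e)+\operatorname{nest}(e)=h-1$). I would then split into the cases $h=2i+2$ and $h=2i+1$ and check term by term that $p\mapsto(\text{type of }e,\ \text{f-cross}(e))$ matches the expansion of the weight: the terms $\alpha q^{0},\dots,\alpha q^{i}$ pair with the nested edges ($\operatorname{cross}<\operatorname{nest}$), the $\gamma$-terms pair with the crossed edges ($\operatorname{cross}>\operatorname{nest}$), and at odd height the lone term $q^{i}$ pairs with the unique balanced edge ($\operatorname{cross}=\operatorname{nest}=i$, which cannot occur at even height). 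Multiplying over the $n+1$ down-steps, one per edge, then reproduces the global monomial $\alpha^{\#\text{nested}}\gamma^{\#\text{crossed}}q^{\sum\text{f-cross}}$.

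I expect the main obstacle to be precisely this term-by-term matching, and in particular the role of the floor in the definition of f-crossing. For a crossed edge with $\operatorname{nest}(e)=0$ — which occurs exactly when the \emph{oldest} open arc is closed ($p=1$, $\operatorname{cross}(e)=h-1$) — the naive exponent $q^{\operatorname{cross}(e)}$ would overshoot the range of $\gamma$-powers actually present; the prescription $\text{f-cross}=\lfloor \operatorname{cross}(e)/2\rfloor$ is engineered so that this case contributes $q^{i}$, since $\lfloor(2i+1)/2\rfloor=\lfloor 2i/2\rfloor=i$, thereby supplying exactly the otherwise-missing lowest $\gamma$-power $\gamma q^{i}$ and turning $p\mapsto(\text{type},\text{f-cross})$ into a genuine bijection onto the terms at each height. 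Checking that this adjustment is consistent at both parities, and that no term is repeated or omitted, is the delicate part; once it is verified, Proposition~\ref{gp} identifies the matching generating function with $Z_n(\alpha,\gamma;q)$ and hence with the tableaux generating function, completing the proof.
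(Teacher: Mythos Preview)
Your proposal is correct and follows essentially the same route as the paper: use Proposition~\ref{gp} to identify $Z_n(\alpha,\gamma;q)$ with a sum over labelled Dyck paths, and then invoke the classical bijection between labelled Dyck paths and matchings of $[2n+2]$. The paper's proof is a single sentence pointing to this classical bijection; you have simply (and correctly) carried out the local verification that the expansion of the down-step weight at height $h$ into $h$ monomials matches the statistics $(\text{nested}/\text{crossed}/\text{balanced},\ \text{f-cross})$ on the edge being closed, including the special role of the floor when $\operatorname{nest}(e)=0$.
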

\begin{proof}
The proof is direct using the classical bijection between
labelled Dyck paths and matchings. See \cite{P} for example.
\end{proof}

\section{Open problems}

We conclude this paper with a list of open problems.

\begin{problem}
Give combinatorial proofs of Propositions \ref{d-b}, \ref{y-1},  and \ref{0},
using
appropriate involutions on staircase tableaux.
\end{problem}

\begin{problem}\label{explicit1}
Recall from equation (\ref{type}) that $Z_{\sigma}(\alpha,\beta,\gamma,\delta;q)$
is the generating polynomial for the staircase tableaux of type $\sigma$; 
here $\sigma$ is a word
in $\{\bullet,\circ\}^n$.  By Theorem \ref{NewThm}, the steady 
state probability that the ASEP is at state $\sigma$ is proportional
to $Z_{\sigma}(\alpha,\beta,\gamma,\delta;q)$.  Find an explicit formula
for $Z_{\sigma}(\alpha,\beta,\gamma,\delta;q)$.
\end{problem}

Problem \ref{explicit1} is probably quite difficult.
Problem \ref{explicit2} should be more tractable, however, since
one has a simple recurrence for $Z_{\sigma}(\alpha,\beta,\gamma,0;q)$ given
by Theorem \ref{explicit-d}.

\begin{problem}\label{explicit2}
Find an explicit formula for $Z_{\sigma}(\alpha,\beta,\gamma,0;q)$.
\end{problem}

\begin{problem}
Recall the definition of the bijection ${\Phi}$ from 
Secton \ref{trees}.
Find a statistic $s(\pi)$ on doubly signed permutations, which 
corresponds to the $q$ statistic on staircase tableaux via 
${\Phi}$.  More specifically, we require that 
$q^r$ is the maximal power of $q$ dividing $\wt(\T)$ if and only if 
$s({\Phi}(\T))=r$.  
\end{problem}

\begin{problem}
If we restrict ${\Phi}$ to the set of 
staircase tableaux of size $n$ of a given type, 
then for all $i$, the first sign associated to position $i$
is the same for all tableaux.  Therefore if we forget the first
sign, we get a bijection from the $2^n n!$ staircase tableaux
of a given type to signed permutations.  For any fixed type $\sigma$,
can one find a statistic $s_{\sigma}(\pi)$ 
on signed permutations which corresponds
to the $q$ statistic on the staircase tableaux of type $\sigma$?
\end{problem}

\begin{problem}
Find an explicit formula for 
$Z_n(y; \alpha, \beta, \gamma, \delta; q)$ from which it 
is obvious that 
$Z_n(y;\alpha,\beta,\gamma,\delta;q)$ 
is a polynomial with positive coefficients.  Such a formula
can be found when $\gamma=\delta=0$ \cite{JV}.
\end{problem}

\begin{problem}
Prove that
$Z_n(y;\alpha,\beta,\gamma,\delta;q)$ is equal to $y^nZ_n(1/y;\beta,\alpha,\delta,\gamma;q)$ by
exhibiting an involution on staircase tableaux.
\end{problem}


\begin{problem}\label{Sloane}
Give a simple bijection proving that 
the numbers $Z_n(y;\alpha,\beta,\gamma,\delta;q)$ 
are given by Sloane's sequence A026671 (enumerating certain lattice paths) when
$\alpha=\beta=\gamma=y=1$ and $\delta=q=0$.
Note that Corollary \ref{genfun} gives  
a (non-bijective) proof of this equality, by showing 
that the generating functions of both sets of numbers are equal.
See also Example \ref{genfunexample}.
\end{problem}

\begin{problem}
Can one find other formulas for the moments of Askey Wilson polynomials? In particular, is there
a formula that makes manifest the symmetry in $a,b,c,d$? 

This is possible when at least one of $a,b,c,d$ is 0.
In particular,  Josuat-Verg\`es \cite{JV} gave a strictly polynomial version of the
Askey Wilson moments when $c=d=0$.
\begin{eqnarray*}
2^n\mu_n(a,b)&=&
\sum_{t=0}^n\sum_{p=0}^{\lfloor (n-t)/2\rfloor}
\sum_{i=0}^p \left({n\choose p-i}-{n\choose p-i-1}\right)
(-1)^i q^{i+1\choose 2}\\
&&\times \left[\begin{array}{c} n-2p\\t\end{array}\right]_q
 \left[\begin{array}{c} n-2p+i\\i\end{array}\right]_qb^t a^{n-t-2p}.
 \end{eqnarray*}
This can be proved from our Theorem \ref{moments3}  with
$c=d=0$, using the $q$-binomial theorem, the binomial theorem
and the terminating $_2\phi_1(x)$ at $x=q$ \cite{GR}.

Using the same techniques, one can obtain a formula for $d=0$,
\begin{eqnarray*}
2^n\mu_n(a,b,c)&=&\sum_{t_1=0}^n \sum_{t_2=0}^n \sum_{p=0}^{\min[\lfloor (n-t_1)/2\rfloor,\lfloor (n-t_2)/2\rfloor]}
\sum_{F=\max[p,\lfloor (n-t_1-t_2)/2\rfloor]}^{min[n-p-t_1,n-p-t_2]}
\left({n\choose p}-{n\choose p-1}\right)\\
&&\times (-1)^{n-t_1-t_2-F-p}q^{n-t_1-t_2-F-p+1\choose 2}\\
&&\times \left[\begin{array}{c} t_1+F-p\\t_1\end{array}\right]_q
\left[\begin{array}{c} t_2+F-p\\n-F-p-t_1\end{array}\right]_q
\left[\begin{array}{c} n-F-p\\t_2\end{array}\right]_q b^{t_1}c^{t_2}a^{-n+t_1+t_2+2F}.
\end{eqnarray*}
This is obviously symmetric in $a$, $b$ and $c=0$.
One can get back the previous equation by setting $c=0$, so $t_2=0$, and then put
$F=n-t_1-p$.
\end{problem}

\begin{problem}
Find a combinatorial proof of the formula for $\mu_n(a,b,c)$ above.
\end{problem}

\end{document}